\documentclass[10pt]{amsart}

\usepackage{amsmath}
\usepackage{amssymb}
\usepackage{amsfonts}
\usepackage{geometry}
\usepackage{url}

\usepackage{setspace}
\usepackage{amsthm}
\pagestyle{plain}
\usepackage[all,cmtip]{xy}
\usepackage{amsmath,amscd}
\usepackage{thmtools}
\usepackage{thm-restate}

\setlength{\topmargin}{0.0in}
\setlength{\textheight}{21.5cm}
\setlength{\evensidemargin}{0.25in}
\setlength{\oddsidemargin}{0.25in}
\setlength{\headsep}{0.1cm}
\setlength{\textwidth}{15.0cm}
\newtheorem{theorem}{Theorem}
\newtheorem{lemma}[theorem]{Lemma}
\newtheorem{proposition}[theorem]{Proposition}
\newtheorem{corollary}[theorem]{Corollary}
\newtheorem*{nono-theorem}{Theorem}

\theoremstyle{definition}
\newtheorem{definition}[theorem]{Definition}

\theoremstyle{remark}
\newtheorem{remark}[theorem]{Remark}

\numberwithin{equation}{section}

\newcommand{\Hom}{\mathrm{Hom}}

\newcommand{\Ad}{\mathrm{Ad}}
\newcommand{\ad}{\mathrm{ad}}

\newcommand{\h}{\mathfrak{h}}
\newcommand{\g}{\mathfrak{g}}
\newcommand{\CC}{\mathbb{C}}
\newcommand{\ZZ}{\mathbb{Z}}
\newcommand{\NN}{\mathbb{N}}

\begin{document}

\title{Degeneration of trigonometric dynamical difference equations for quantum loop algebras to trigonometric Casimir equations for Yangians}

\author{Martina Balagovi\' c}
\address{\newline
work completed at: Department of Mathematics\\  University of York\\ UK \newline
and Department of Mathematics \\ University of Zagreb \\ Croatia\newline
currently at: School of Mathematics and Statistics\\ Newcastle University \\ UK}
\thanks{Supported by the EPSRC grant EP/I014071/1}
\email{martina.balagovic@newcastle.ac.uk}

\begin{abstract}
We show that, under Drinfeld's degeneration \cite{D} of  quantum loop algebras to Yangians, the trigonometric  dynamical difference equations \cite{EV} for the quantum affine algebra degenerate to the trigonometric Casimir differential equations \cite{TL1} for Yangians.
\end{abstract}

\maketitle

\section{ Introduction} 
\subsection{The main result}
For $\g$ a simple Lie algebra over $\CC$, the \emph{quantum loop algebra} $U_qL\g$ and the \emph{Yangian} $Y_\hbar \g$ are, respectively, deformations of the loop algebra $UL\g=U\g[z,z^{-1}]$ and the current algebra $U\g[u]$. The algebras $U\g[z,z^{-1}]$ and $U\g[u]$ are related by a change of variables $z=e^u$. More precisely, one can say that the completion of $U\g[z,z^{-1}]$ at $z=1$ is isomorphic to the completion of $U\g[u]$ at $u=0$, or one can consider the weaker degeneration statement that  $U\g[z,z^{-1}]$ has a filtration by the ideal $\left< z-1\right>$ such that the associated graded algebra is $U\g[u]$. Drinfeld stated the corresponding fact for their deformations  $U_qL\g$ and the $Y_\hbar \g$  in \cite{D}; this was elaborated and proved in detail by Guay and Ma in \cite{GM} (who show that the quantum loop algebra degenerates to the corresponding Yangian), and by Gautam and Toledano Laredo in \cite{GTL1} (who prove a stronger claim that the appropriate completions of the quantum loop algebra and Yangian are isomorphic). 

One can attach a quantum version of the Knizhnik-Zamolodchikov equations to the quantum loop algebra and to the Yangian, called trigonometric qKZ and qKZ equations, respectively. Both sets of equations are written using universal R-matrices, and they correspond to each other under the above degeneration. In this paper we study equations commuting with qKZ equations, namely the trigonometric dynamical difference equations for quantum loop algebras from \cite{EV} and the trigonometric Casimir equations for Yangians from \cite{TL1}. We show that they correspond to each other under the degeneration as well. 

The trigonometric dynamical difference equations were first described by Etingof and Varchenko in \cite{EV}. For functions with values in a tensor product of $N$ representations of $U_qL\g$, depending on complex variables $z_1,\ldots z_N$ and a dynamical parameter $\lambda\in \h^*\cong \h$, they are difference equations expressing the shift of $\lambda$ by a fundamental coweight $\lambda \mapsto \lambda+\kappa \omega^{\vee}$ in terms of the dynamical Weyl group operators corresponding to the lattice element $\omega^{\vee}$ in the affine Weyl group. They commute with the trigonometric qKZ equations, which are q-difference equations in parameters $z_i$.

The trigonometric Casimir equations were first described by Toledano Laredo in \cite{TL1}. For a function with values in a tensor product of $N$ representations of $Y_\hbar \g$, depending on $N$ complex variables and a parameter $e^{\lambda}$ in the complex torus $H$ of $\h$, they are differential equations describing the derivative in direction $\omega^{\vee}$ of the function in the $e^{\lambda}$ variable in terms of universal elements of the Yangian, with logarithmic singularities along the root hypertori. They commute with the rational qKZ equations, which are difference equations in parameters $z_i$.

The main result of the paper is the following (see Theorem \ref{main} and Corollaries \ref{MyOp} and \ref{ValOp}).

\begin{nono-theorem}
Under Drinfeld's degeneration of  quantum loop algebras to Yangians, the trigonometric dynamical difference equations for  the quantum affine algebra, describing the shift of variables $f(\lambda) \mapsto f(\lambda+ \kappa \mu)$, degenerate to the trigonometric Casimir differential equation for Yangians, describing the derivative  in direction $d_{\mu}g(\lambda)$. 
\end{nono-theorem}

The following reason for interest in the trigonometric dynamical difference equations, trigonometric Casimir equations, and their relations was explained to me by Pavel Etingof. The paper \cite{MO} explains that the trigonometric Casimir connection can be identified with the quantum connection for the quantum equivariant cohomology of Nakajima quiver varieties. Quantum K-theory for these varieties is not developed yet, but the experts in the field expect that, when passing from cohomology to K-theory for Nakajima quiver varieties, the trigonometric Casimir connection from \cite{MO} will be  replaced by the trigonometric dynamical difference equations. Results in this paper give supporting evidence for this.

\subsection{The context}
This theorem and the equations it concerns fit into a larger framework. Consider the following diagram of algebras associated to a simple Lie algebra $\g$: 

$$\xymatrix{
& U_{q}L \mathfrak{g}  \ar@{-}[dr]^{q\to 1}  \ar@{-}[dl]_{\textrm{degeneration}}   & \\
Y_{\hbar}\mathfrak{g} \ar@{-}[dr]_{\hbar \to 0} &  & UL\mathfrak{g} \ar@{-}[dl]^{z=e^u}\\
& U\mathfrak{g}[u] & 
}
$$

The ``down right" arrows are specializations at $\hbar=0$ and $q=e^{\hbar/2}=1$ . The ``down left" arrows are degenerations. This means that the algebras $UL\mathfrak{g}=U\mathfrak{g}[z,z^{-1}]$ and $U_{q}L \mathfrak{g}$ have filtrations with respect to certain ideals (the kernel of $z\to 1$ for $UL\mathfrak{g}$  and the kernel of $q\to 1,z\to 1$ for  $U_{q}L \mathfrak{g}$), so that the associated graded algebras are $U\mathfrak{g}[u] $ and $Y_{\hbar}\mathfrak{g} $. Additionally, $UL\mathfrak{g}$ and $U_{q}L \mathfrak{g}$ can be completed with respect to these filtrations, and these completions are isomorphic to the completions of graded algebras $U\mathfrak{g}[u] $ and $Y_{\hbar}\mathfrak{g}$  (see \cite{GTL1}, or Section \ref{degeneratesection}).

To each of these algebras, one can associate a system of Knizhnik--Zamolodchikov equations:
$$\xymatrix{
&  \mathrm{trig}\, qKZ  \ar@{-}[dr]  \ar@{-}[dl]   & \\
qKZ \ar@{-}[dr] &  &  \mathrm{trig}\, KZ \ar@{-}[dl]\\
& KZ & 
}
$$

These are equations for functions depending on  $N$ distinct complex variables and one dynamical variable in $\h$ or $H$, and taking values in a tensor product of $N$ representations.
 They are all defined using appropriate versions of universal R--matrices. Under the above degenerations and specializations, these universal R--matrices map/degenerate to each other, and consequently so do these systems of equations. (See \cite{FR} and \cite{EFK}).

Additionally, to each of the algebras one can associate a system of equations commuting with the appropriate version of KZ: 

$$\xymatrix{
& \textrm{\parbox{1in}{\vspace{4pt} trigonometric dynamical difference \cite{EV} } }  \ar@{-}[dr]  \ar@{-}[dl]_{\textrm{main Thm}}  & \\
\textrm{\parbox{1in}{\vspace{4pt}trigonometric Casimir \cite{TL1}}} \ar@{-}[dr] &  &  \textrm{\parbox{1in}{\vspace{4pt} dynamical difference \cite{TV1}}} \ar@{-}[dl]\\
& \textrm{\parbox{1in}{\vspace{4pt}Casimir \cite{FMTV} \cite{MTL}, \cite{TL1} }} & 
}
$$

Trigonometric dynamical difference equations for $U_{q}L \mathfrak{g}$ are described in \cite{EV}, and their definition is a q-generalization of the definition of dynamical difference equations for $UL \mathfrak{g}$ from \cite{TV1}. Trigonometric Casimir equations were defined in \cite{TL1}, where it is shown that they commute with qKZ equations, and that they are a generalization of the corresponding ($\g=\mathfrak{gl}_n$) differential equations compatible with qKZ from \cite{TV2, TV3, MTV}. Different versions of Casimir equations commuting with rational KZ have been independently discovered in \cite{FMTV}, \cite{MTL}, \cite{TL1}, and by De Concini (unpublished). A computation similar to the one presented in this paper describes the degeneration of 
dynamical difference equations to Casimir equations, as was stated in the introduction of \cite{TV1}. This paper completes this diagram of degenerations, and can be seen as a quantized version of the degeneration from \cite{TV1}.

\subsection{The roadmap of the paper}
In Section 2 we fix notation related to the underlying finite dimensional simple Lie algebra $\g$. Section 3 contains all the prerequisites about the quantum affine algebra and the trigonometric  dynamical difference equations. Analogously, Section 4 is about the Yangian, trigonometric Casimir equation and the degeneration of quantum affine algebra to Yangian. The new results of the paper are in the main Section 5. The main result is Theorem \ref{main}, describing how the operators from the quantum affine algebra which define the trigonometric dynamical difference equations degenerate to the operators defining trigonometric Casimir equations when one passes from the quantum affine algebra of $\g$
to the associated Yangian. The interest in this result stems from Corollaries \ref{MyOp}
 and \ref{ValOp}, which relate the trigonometric dynamical difference equations and the trigonometric Casimir equations. Section 6 contains the proof of Theorem \ref{main}.

\subsection*{Acknowledgements}
 I am very grateful to Pavel Etingof for suggesting this problem, explaining the context described in the introduction, and for his guidance throughout the project. I had several very helpful conversations with Valerio Toledano Laredo and Sachin Gautam, who carefully explained both their published work and their work in progress, and shared early drafts of their papers. I also wish to thank Sachin Gautam and the anonymous referee for reading the paper carefully and offering helpful suggestions.

\section{Preliminaries}\label{preliminaries}
Let $\mathfrak{g}$ be a simple Lie algebra  of rank $n$ over $\mathbb{C}$, $\mathfrak{h}$ its Cartan subalgebra, $[a_{ij}]_{i,j}$ the Cartan matrix of finite type encoding this data, and $W$ the corresponding finite Weyl group. Fix a choice $\mathbf{\Delta}=\{ \alpha_1,\ldots \alpha_n \}$ of simple roots in $\mathfrak{h}^*$, and a basis $\{ h_1,\ldots h_n \}$ of $\mathfrak{h}$ such that $\alpha_i(h_j)=a_{ji}$. The Lie algebra $\mathfrak{g}$ has a standard  presentation with Chevalley generators $e_i, f_i, h_i$, $i=1,\ldots n$, and relations
$$[h_i,h_j]=0 \,\,\,\, [h_i,e_j]=a_{ij}e_j \,\,\,\, [h_i,f_j]=-a_{ij}f_j $$
$$[e_i,f_j]=\delta_{ij}h_i, \,\,\,\, \mathrm{ad} (e_i)^{1-a_{ij}}e_j=0, \,\,\,\, \mathrm{ad} (f_i)^{1-a_{ij}}f_j=0$$

Let $d_i$ be the symmetrizing numbers for the Cartan matrix; they are the minimal positive integers which satisfy $d_i a_{ij}=d_j a_{ji}$. There is a unique invariant bilinear form $\left< \cdot, \cdot \right>$ on $\mathfrak{g}$ such that $\left< h_{i} , h_{j}  \right>=d_i^{-1}a_{ji}$, $\left< e_{i} , f_{j}  \right>=\delta_{ij}d_i^{-1}$, and all other pairings of Chevalley basis vectors  are $0$. It induces an identification $\mathfrak{g}\cong \mathfrak{g}^*$. Under this identification, $h_i\mapsto d_i^{-1}\alpha_i$, and $\left< \alpha_{i} , \alpha_{j}  \right>=d_ia_{ij}$. The integers $d_i$ associated to $W$-conjugate simple roots are the same, so defining $d_{\alpha}=\frac{\left< \alpha,\alpha \right>}{2}$ extends this to a function on all roots. 

For any root $\alpha$ let $s_{\alpha}$ be the reflection across the hyperplane $\left< \alpha,\cdot \right>=0$ in $\h^*$. Then the Weyl group $W$ is a finite group generated by simple reflections $s_i=s_{\alpha_i}$, acting on $\mathfrak{h}^*$ by $s_i(\lambda)=\lambda-\lambda(h_i)\alpha_i$ and on $\h$ by $s_i(h)=h-\alpha_i(h)h_i$. Let $BW$ denote the corresponding braid group. Let $\mathbf{R}=W\mathbf{\Delta}$ be the set of roots, and $\mathbf{R}_+$ the subset of positive roots. Let $P=\{\lambda \in \mathfrak{h}^* | \lambda(h_i)\in \mathbb{Z} \}$ be the weight lattice, $Q=\mathrm{span}_{\mathbb{Z}}\mathbf{R}\subseteq P$ the root lattice, and $Q^{\vee}\subseteq P^{\vee}\subseteq \h^*$ their dual lattices.  The coroot lattice $Q^{\vee}$ is spanned by coroots $\alpha_i^{\vee}=\frac{2\alpha_i}{\left< \alpha_{i} , \alpha_{i}  \right>}$, and the coweight lattice $P^{\vee}$ by the fundamental coweights $\omega_i^{\vee}$ such that $\left< \omega_i^{\vee} , \alpha_{j}  \right>=\delta_{ij}$. Let $H=\Hom_{\mathbb{Z}}(P,\mathbb{C}^\times)$ and $T=\Hom_{\mathbb{Z}}(Q,\mathbb{C}^\times)$ be the corresponding complex algebraic tori. 

For any positive root $\alpha$, let $e_\alpha,f_{\alpha}, h_{\alpha}$ be the corresponding $\mathfrak{sl}_2$ triple, with $h_{\alpha}$ corresponding to $\alpha^{\vee}$ under the identification $\g\cong \g^*$. If $\alpha=w(\alpha_i)$, then $h_{\alpha}=w(h_i)$, and $e_\alpha,f_{\alpha}$ are vectors spanning the root spaces $\alpha, -\alpha$, mutually normalized so that $\left< e_{\alpha} , f_{\alpha}  \right>=d_{\alpha}^{-1}=d_{i}^{-1}$. We shall also use another common choice of basis: let $t_{\alpha}=d_{\alpha}h_{\alpha}\in \h$ correspond to $\alpha\in \h^*$ under $\g\cong \g^*$, and let $x_{\alpha}^+$, $x_{\alpha}^{-}$ be a choice of elements proportional to $e_\alpha,f_{\alpha}$ and satisfying $\left< x_{\alpha}^+ , x_{\alpha}^-  \right>=1$. Then $\h$ is spanned by $t_i$, and $U\g$ is generated by $t_i$ and $x_{i}^\pm=x_{\alpha_i}^\pm$.

Let $\theta$ be the highest root of $\g$, $d_0=d_{\theta}=\frac{\left< \theta,\theta \right>}{2}$, and $\theta^{\vee}=d_0^{-1}\theta$ the corresponding coroot. Let $\rho\in \h^*$ be the half sum of positive roots, so that $\rho(h_i)=1$, and let $\rho^{\vee}\in \h$ be such that $\alpha_i(\rho^{\vee})=1$.

Through the paper, let $q$ and $\hbar$ be formal variables, related by $q^2=e^{\hbar}$. Let $q_i=q^{d_i}$. We use the standard notation for quantum integers $[m]_q=\frac{q^m-q^{-m}}{q-q^{-1}}$ and ${a \brack b}_q=\frac{[a]_q!}{[b]_q![a-b]_q!}$.

\section{Quantum loop algebras and the trigonometric dynamical difference equations} 
\subsection{Quantum loop algebra, the loop presentation}
The quantum loop algebra $U_q L\g$ is a deformation of $UL\g=U\g [u,u^{-1}]$. Its loop presentation  is as follows: as an associative algebra over $\CC[[\hbar]]$, it is topologically generated by $E_{i,k},F_{i,k}, H_{i,k}$, for $i=1,\ldots n$, $k\in \ZZ$, with relations: 
$$[H_{i,k}, H_{j,l}]=0$$
$$[H_{i,0},E_{j,k}]=a_{ij}E_{j,k} \,\,\,\,\,\,\,\, [H_{i,0},F_{j,k}]=-a_{ij}F_{j,k} $$
$$[H_{i,r},E_{j,k}]=\frac{[ra_{ij}]_{q_i}}{r}E_{j,k+r} \,\,\,\, \,\,\,\, [H_{i,r},F_{j,k}]=-\frac{[ra_{ij}]_{q_i}}{r}F_{j,k+r} \,\,\,\, \,\,\,\, \textrm{for } r\ne 0$$
$$E_{i,k+1}E_{j,l}-q_i^{a_{ij}}E_{j,l}E_{i,k+1}=q_i^{a_{ij}} E_{i,k}E_{j,l+1}-E_{j,l+1}E_{i,k}$$
$$F_{i,k+1}F_{j,l}-q_i^{-a_{ij}}F_{j,l}F_{i,k+1}=q_i^{-a_{ij}} F_{i,k}F_{j,l+1}-F_{j,l+1}F_{i,k}$$
$$[E_{i,k},F_{j,l}]=\delta_{ij}\frac{\varPsi_{i,k+l}-\varPhi_{i,k+l}}{q_i-q_i^{-1}}$$
Here $\varPsi_{i,k}$ and $\varPhi_{i,k}$ are determined by: 
$$\varPsi_i (v)=\sum_{r\ge 0} \varPsi_{i,r}v^{-r}=q_i^{H_{i,0}}\exp\left( (q_i-q_{i}^{-1})\sum_{s\ge 1} H_{i,s} v^{-s}\right) $$
$$\varPhi_i (v)=\sum_{r\ge 0} \varPsi_{i,-r}v^{r}=q_i^{- H_{i,0}}\exp\left( -(q_i-q_{i}^{-1})\sum_{s\ge 1} H_{i,-s} v^{s}\right) $$
For any $i\ne j$ and any $l, k_1,\ldots k_{1-a_{ij}}$, and the sum over all permutations $\sigma$ of $\{1,2,\ldots 1-a_{ij} \}$, 
$$\sum_{\sigma}\sum_{s=0}^{1-a_{ij}}(-1)^s {1-a_{ij} \brack s}_q E_{i,k_{\sigma(1)}}\ldots E_{i,k_{\sigma(s)}}E_{j,l} E_{i,k_{\sigma(s+1)}}\ldots E_{i,k_{\sigma(1-a_{ij})}}=0 $$ 
$$\sum_{\sigma}\sum_{s=0}^{1-a_{ij}}(-1)^s {1-a_{ij} \brack s}_q F_{i,k_{\sigma(1)}}\ldots F_{i,k_{\sigma(s)}}F_{j,l} F_{i,k_{\sigma(s+1)}}\ldots F_{i,k_{\sigma(1-a_{ij})}}=0 $$ 

At $q=1$, this algebra is $U\g[z,z^{-1}]$ with $E_{i,l}\equiv E_i z^l$, $F_{i,l}\equiv F_i z^l$, $H_{i,l}\equiv H_i z^l$. 

There is an extension $U\hat{\g}$ of $UL\g$ by a central element $c$, and its deformation $U_q\hat{\g}$ (see \cite{CP}). Additionally, to make the root spaces finite dimensional, one can add a derivation $d$, and consider the semidirect product $U_q\tilde{\g}=U_q\hat{\g}\rtimes \CC d$, with $d$ acting on $H_{i,k}, E_{i,k}, F_{i,k}$ by $k$. In that case, $\tilde{\h}=\h \oplus \CC c \oplus \CC d$ takes the role of Cartan. We write the elements of the dual Cartan $\tilde{\h}^*$ as triples $(\lambda,k,\delta)$, with $(\lambda,k,\delta)(h+ac+bd)=\lambda(h)+ak+ b\delta$. The invariant form extends by $\left< d,c \right>=1$, and $d,c$ are orthogonal to all the other generators. The roots of this algebra, called the affine roots, are $\{ (\alpha,0,n) | \alpha \in \mathbf{R} \textrm{ or } \alpha=0, n\in \mathbb{Z}, (\alpha,n)\ne (0,0) \}$. The positive roots are $\{ (\alpha,0,n) | n>0, \alpha \in \mathbf{R}\cup\{ 0 \} \}\cup \{ (\alpha,0,0) | \alpha \in \mathbf{R}_+ \}$.

We are interested in finite dimensional and loop representations, on which $c$ always acts by zero. That is why we do not include it in the definition of the algebra. However, we keep working with the full dual Cartan $\tilde{\h}^*$.


\subsection{Quantum loop algebra, the Kac-Moody presentation}
The algebra $U_{q}L\g$ has another presentation, of the Kac-Moody type. For $\theta$  the highest root of $\g$, consider the extended Cartan matrix of size $n+1$, labeled by $0,\ldots n$, with $a_{00}=2$, $a_{i0}=\frac{-2\left<\alpha_i, \theta \right>}{\left< \alpha_i, \alpha_i \right>}$, $a_{0i}=\frac{-2\left<\alpha_i, \theta \right>}{\left< \theta, \theta \right>}$. 
An alternative presentation of the quantum loop algebra is as an associative algebra over $\CC[[\hbar]]$ with generators $\mathcal{E}_i, \mathcal{F}_i, \mathcal{H}_i$, $i=0,\ldots n$ and relations:
$$[\mathcal{H}_i, \mathcal{H}_j]=0$$
$$[\mathcal{H}_i, \mathcal{E}_j]=a_{ij}\mathcal{E}_j \,\,\,\,\,\,\,\, [\mathcal{H}_i, \mathcal{F}_j]=-a_{ij}\mathcal{F}_j $$
$$[\mathcal{E}_i, \mathcal{F}_j]=\delta_{ij}\frac{q_i^{\mathcal{H}_i}-q_i^{\mathcal{-H}_i}}{q_i-q_i^{-1}}$$
$$\sum_{k=0}^{1-{a_{ij}}} (-1)^k{1-a_{ij} \brack k}_q \mathcal{E}_i^{1-a_{ij}-k} \mathcal{E}_j \mathcal{E}_i^k=0 \,\,\,\,\,\,\,\,  \sum_{k=0}^{1-{a_{ij}}} (-1)^k{1-a_{ij} \brack k}_q \mathcal{F}_i^{1-a_{ij}-k} \mathcal{F}_j \mathcal{F}_i^k=0 $$

The two presentations give isomorphic algebras, but the isomorphism is not unique. Following \cite{B}, we fix a choice of isomorphism in Section \ref{LatticeElts}. The simple root corresponding to $\mathcal{E}_0$ is $\alpha_{0}=(-\theta,0,1) \in \tilde{\h}^*$.



\subsection{Loop representations}
For $\overline{V}$ a finite dimensional representation of $U_qL\g$, its loop representation is $V=\overline{V}[z,z^{-1}]$, with the action $E_{i,k}|_{V}. x=z^k E_{i,k}|_{\overline{V}} . x $, $F_{i,k}|_{V}. x=z^k F_{i,k}|_{\overline{V}} . x $, $H_{i,k}|_{V}. x=z^k H_{i,k}|_{\overline{V}} . x $. Define also the extended loop representation as $V=\overline{V}[z^{1/D},z^{-1/D}]$, where $D$ is the common denominator of $\frac{\left< \omega_i^{\vee},\omega_i^{\vee} \right>}{d_i}$, for $i=1\ldots n$.

The qKZ equations and the trigonometric dynamical difference equations concern functions taking values in (extended) loop representations and in (completed) tensor products of loop representations. We will use the facts that every loop representation has a weight decomposition and that one can define the action of the dynamical Weyl group on them, but we are going to work with the universal operators (i.e. algebra elements)  defining these equations and not their evaluations in loop representations,  so we will not need to work with any particular loop representation directly. 

\subsection{Affine and extended affine Weyl group}

We defined $W$ to be the finite Weyl group associated to $\g$ and generated by  the simple reflections $s_1,\ldots s_n$. The Weyl group associated to $L\g$ and the extended Cartan matrix is the affine Weyl group $W^{aff}$, generated by the simple reflections $s_0,\ldots s_n$. As $s_i$ is a simple reflection with respect to the root $\alpha_i$, its action on the elements of $\tilde{\h}^*$ is:
\begin{align}
i\ne 0,\, \tilde{\alpha_i}=(\alpha_i,0,0): \quad & s_i(\lambda,k,\delta)=(\lambda-\left< \lambda,\alpha_i^\vee\right> \alpha_i, k,\delta) \label{reflect} \\
i= 0,\, \tilde{\alpha_0}=(-\theta,0,1): \quad & s_0(\lambda,k,\delta)=(\lambda-(\left< \lambda,\theta^\vee\right> -k)\theta,k,\delta+\left< \lambda,\theta^\vee\right> -k) \label{reflect0}
\end{align}

There is a well know isomorphism $W^{aff}\cong W\ltimes Q^{\vee}$. For $\nu \in Q^{\vee}$, write  $t^{\nu}=(1,\nu)\in W\ltimes Q^{\vee}$, and identify $w\in W$ with $(w,0)\in W\ltimes Q^{\vee}$. Then $t^{\nu}t^{\mu}=t^{\nu+\mu}$ and $wt^{\nu}w^{-1}=t^{w(\nu)}$. The isomorphism $W^{aff}\cong W\ltimes Q^{\vee}$ is given by $s_i\mapsto s_i$ for $i=1,\ldots n$ and $s_0\mapsto t^{\theta^{\vee}}s_{\theta}$. The action of $t^{\nu}$ on $\tilde{\h}^*$ is:
\begin{align}
t^\nu(\lambda,k,\delta)&=(\lambda+d_0k\nu, k, \delta-\left< \lambda,\nu\right>-d_0k\frac{\left< \nu,\nu \right>}{2}).\label{LatticeAction}
\end{align}

The extended affine Weyl group is $W^{ext}=W\ltimes P^{\vee}$, with the above action extending to $\nu\in P^{\vee}$ by the same formula. This is not a Coxeter group, but it has a well defined notion of length. Let $\Pi$ be the group of elements of length zero. There is a split exact sequence $$1\to W\ltimes Q^{\vee} \to W\ltimes P^{\vee}\to \Pi\to 1,$$  so $\Pi\cong W^{ext}/W^{aff}\cong P^{\vee}/Q^{\vee}$, and $W^{ext}\cong \Pi\ltimes W^{aff}$. The elements of $\Pi$ are parametrized by minuscule fundamental coweights $\omega_i^{\vee}$ (i.e. elements of $P^{\vee}$ such that $\left<\omega_i^{\vee},\alpha_j \right>=\delta_{ij}$ and $\left<\omega_i^{\vee},\theta \right>=1$). Let $w_0$ be the longest element of $W$, $w_{0}^i$ the longest element of the Weyl group generated by $s_1,\ldots s_{i-1}, s_{i+1},\ldots s_n$, and $w_{[i]}=w_0w_0^i$. Then all nontrivial elements of $\Pi$ are $\pi_i=t^{\omega_i^{\vee}}(w_{[i]})^{-1}$. 

Conjugation action of $\Pi$ on $W^{aff}$ permutes simple reflections. In this way, $\Pi$ embeds into the automorphism group of the extended Dynkin diagram of $\g$. For $\pi \in \Pi$ such that $\pi s_i \pi^{-1}=s_j$, write $j=\pi(i)$.

\subsection{Affine and extended affine braid group}\label{braidgpsection}
The affine Weyl group $W^{aff}$ is a Coxeter group, so one can consider the associated braid group $BW^{aff}$. Define the extended affine braid group as $BW^{ext}=\Pi\ltimes BW^{aff}$, using the same $\Pi$ permutation action on the generators $s_i$ as above. Let us continue using notation $s_i$, $t^\mu$ and $\pi$ for elements of the braid group. 

The group $BW^{ext}$ has two presentations, analogous to the two presentations of $W^{ext}$ as $\Pi\ltimes W^{aff}$ and as $W\ltimes P^{\vee}$, see \cite{Macdonald}:
\begin{itemize}
\item $BW^{ext}=\Pi\ltimes BW^{aff}$
\begin{itemize}
\item Generators: $$
\pi \in \Pi,\,\,\, 
s_i, s_i^{-1},  i=0,\ldots n $$
\item Relations:  $$\Pi \textrm{ is a subgroup}$$ 
$$s_{i} \textrm { satisfy braid relations} $$ $$\pi s_i \pi^{-1}=s_{\pi(i)}$$
\end{itemize}
\item $BW^{ext}=B(W\ltimes P^{\vee})$
\begin{itemize}
\item Generators: $$t^{\omega_i^{\vee}}, \,\,\, s_i,s_i^{-1}, i=1\ldots n $$
\item Relations: $$t^{\omega_i^{\vee}}t^{\omega_j^{\vee}}=t^{\omega_j^{\vee}}t^{\omega_i^{\vee}}$$ $$s_{i} \textrm { satisfy braid relations} $$ $$s_i^{-1}t^{\omega_i^{\vee}}s_i^{-1}=t^{s_i(\omega_i^{\vee})}$$ $$s_jt^{\omega_i^{\vee}}=t^{\omega_i^{\vee}} s_j,\,\,\, j\ne i$$
\end{itemize}
\end{itemize}

\subsection{Quantum Weyl group}\label{QuantumWeylgroup}

For any $i=0,\ldots n$, let $(U_{q}\mathfrak{sl}_2)_i$ be the subalgebra of $U_{q}L\g$ generated by $\mathcal{E}_i, \mathcal{F}_i, \mathcal{H}_i$, isomorphic to $U_{q_i}\mathfrak{sl}_2$. The quantum Weyl group element of this algebra is the following element of its completion (see \cite{S}):
\begin{eqnarray}\mathbb{S}_i=\mathbb{S}(\mathcal{H}_i, \mathcal{E}_i, \mathcal{F}_i)&=&\exp_{q_i^{-1}}(-q_i^{-1}\mathcal{F}_iq_{i}^{\mathcal{H}_i}) \exp_{q_i^{-1}}(\mathcal{E}_i)\exp_{q_i^{-1}}(-q_i\mathcal{F}_iq_{i}^{-\mathcal{H}_i}) \, q_i^{\mathcal{H}_i(\mathcal{H}_i+1)/2} \label{quantumWeyl} \\
\nonumber &=& \exp_{q_i^{-1}}(q_i^{-1}\mathcal{E}_iq_{i}^{-\mathcal{H}_i}) \exp_{q_i^{-1}}(-\mathcal{F}_i)\exp_{q_i^{-1}}(q_i\mathcal{E}_iq_{i}^{\mathcal{H}_i}) \, q_i^{\mathcal{H}_i(\mathcal{H}_i+1)/2} 
\end{eqnarray}
where
$\exp_{p}(x)=\sum_{m\ge 0} \frac{1}{[m]_p!} p^{m(m-1)/2}\, x^m.$ An integrable module for $U_{q}L\g$ is a module $V$ which is locally finite for $(U_{q}\mathfrak{sl}_2)_i$ for every $i$. The operators $\mathbb{S}_i$ acts on such modules, mapping the weight space $V[\nu]$ to $V[s_i\nu]$.

The quantum Weyl group elements are invertible, and satisfy the braid relation (see \cite{L, S}), so the map $s_{i}\to \mathbb{S}_i$ can be extended to an action of the braid group on the representation. For  $w=s_{i_1}^{\delta_1}\ldots s_{i_k}^{\delta_k}$, $\delta_i=\pm 1$ a reduced expression in the braid group, define $\mathbb{S}_w=\mathbb{S}_{i_1}^{\delta_1}\ldots \mathbb{S}_{i_k}^{\delta_k}$. For $U_{q}\mathfrak{sl}_2$ a finite dimensional representation $V_m$ with highest weight $m$ and a basis given by $v_{m}\in V_m[m]$, $v_{m-2j}=\frac{f^j}{[j]_q!}v_m$, this action can be computed as $\mathbb{S}v_{m-2j}=(-1)^{m-j}q^{(m-j)(j+1)}v_{2j-m}$. 

Let $T_i$ be the automorphism of $U_{q}L\g$ defined as $T_i(x)=\mathbb{S}_ix\mathbb{S}_i^{-1}$. We shall need its inverse $T_i^{-1}$, which given on generators by the following formulas (see \cite{B,S,CP}):
$$T_i^{-1}(\mathcal{H}_j)=\mathcal{H}_j-a_{ji}\mathcal{H}_i=s_i(\mathcal{H}_j) \,\,\,\,\, \forall j$$
$$T_{i}^{-1}(\mathcal{E}_i)=-q_i^{-\mathcal{H}_i}\mathcal{F}_i   \,\,\,\,\,\,\,\, T_{i}^{-1}(\mathcal{F}_i)=-\mathcal{E}_iq_i^{\mathcal{H}_i} $$
$$T_{i}^{-1}(\mathcal{E}_j)= \sum_{k=0}^{-a_{ij}} (-1)^{k+a_{ij}} q_i^{-k} \frac{\mathcal{E}_i ^{k}}{[k]_{q_i}!}\, \mathcal{E}_j \, \frac{\mathcal{E}_i^{-a_{ij}-k}}{[-a_{ij}-k]_{q_i}!} \,\,\,\,\,\, j\ne i$$
$$T_{i}^{-1}(\mathcal{F}_j)= \sum_{k=0}^{-a_{ij}} (-1)^{k+a_{ij}} q_i^{k} \frac{\mathcal{F}_i ^{-a_{ij}-k}}{[-a_{ij}-k]_{q_i}!}\, \mathcal{F}_j \, \frac{\mathcal{F}_i^{k}}{[k]_{q_i}!} \,\,\,\,\,\, j\ne i.$$

One can extend this to the action of $BW^{ext}$ by defining $${T_\pi}(\mathcal{E}_j)=\mathcal{E}_{\pi(j)} \qquad {T_\pi}(\mathcal{F}_j)=\mathcal{F}_{\pi(j)}  \qquad  {T_\pi}(\mathcal{H}_j)=\mathcal{H}_{\pi(j)}.$$ 

For $w\in BW^{ext}$, $w=\pi s_{i_1}^{\delta_1}\ldots s_{i_l}^{\delta_l}$, let $T_w=T_\pi T_{s_{i_1}}^{\delta_1}\ldots T_{s_{i_l}}^{\delta_l}$. For brevity, write $T_{\omega_i^{\vee}}=T_{t^{\omega_i^{\vee}}}$.




\subsection{Root vectors}
Quantum Weyl group from the previous section can be used to define analogues of the root vectors $e_{\alpha},f_{\alpha}$ for the quantum affine algebra. If $w=s_{i_1}\ldots s_{i_k}$ is a reduced decomposition $\delta$ of $w\in W^{aff}$ such that $s_{i_1}\ldots s_{i_k}s_j$ is also reduced, then $\alpha=w(\alpha_j)$ is a positive root. Define 
$$\mathcal{H}_{\alpha}=T_{i_1}^{-1}\ldots T_{i_k}^{-1}(\mathcal{H}_j)=s_{i_1}\ldots s_{i_k}(\mathcal{H}_j) $$
$$\mathcal{E}_{\alpha}^{\delta}=T_{i_1}^{-1}\ldots T_{i_k}^{-1}(\mathcal{E}_j) \qquad
\mathcal{F}_{\alpha}^{\delta}=T_{i_1}^{-1}\ldots T_{i_k}^{-1}(\mathcal{F}_j).
$$

They satisfy: 
\begin{align*}
[\mathcal{H}_i,\mathcal{E}_{\alpha}^{\delta}]=\alpha({H}_i) \mathcal{E}_{\alpha}^{\delta} \quad & \quad
[\mathcal{H}_i,\mathcal{F}_{\alpha}^{\delta}]=-\alpha({H}_i) \mathcal{F}_{\alpha}^{\delta}.
\end{align*}
The Cartan element $\mathcal{H}_{\alpha}$ does not depend on $\delta$, while $\mathcal{E}_{\alpha}^{\delta}, \mathcal{F}_{\alpha}^{\delta}$ generally do. At $q=1$, the root spaces are one dimensional, so for $\alpha=(\alpha',0,n)\in \tilde{\h}^*$, $\mathcal{E}_{\alpha}^{\delta}$ is a multiple of $e_{\alpha'}z^n$ if $\alpha'>0$ or $f_{\alpha'}z^n$ if $\alpha'<0$. Analogously, $\mathcal{F}_{\alpha}^{\delta}$ is a multiple of $f_{\alpha'}z^{-n}$ or $e_{\alpha'}z^{-n}$. Moreover, as $T_i^{-1}$ at $q=1$ preserves the invariant form $\left< \cdot , \cdot \right>$, we also have $\left< \mathcal{E}_{\alpha}^{\delta} , \mathcal{F}_{\alpha}^{\delta} \right>=\left<e_{\alpha'}z^n, f_{\alpha'}z^{-n} \right>=d_{\alpha}=d_{j}$. So, although  $\mathcal{E}_{\alpha}^{\delta}, \mathcal{F}_{\alpha}^{\delta}$ depend on $\delta$, at $q=1$ their product $\mathcal{E}_{\alpha}^{\delta} \mathcal{F}_{\alpha}^{\delta}$ does not, and is equal to $e_{\alpha'}f_{\alpha'}$ or $f_{\alpha'}e_{\alpha'}$. Analogous elements with the same properties could have been defined using $T_i$ instead of $T_i^{-1}$; in general they are not equal.

\subsection{Lattice elements of the quantum Weyl group}\label{LatticeElts}
Following \cite{B}, let us now use the above braid group action to fix an isomorphism between two presentations of $U_qL\g$. First, choose a sign map $o$, assigning $+1$ or $-1$ to each vertex in the Dynkin diagram of $\g$, in such a way that two adjacent vertices have opposite signs (as the Dynkin diagram of a simple $\g$ is connected, there are exactly two such choices). Rescale $T_{\omega_i^{\vee}}$, the braid group action of $t^{\omega_i^{\vee}}\in BW^{ext}$ by quantum Weyl group operators, by $o(i)$ to get $\hat{T}_{\omega_i^{\vee}}=o(i)T_{\omega_i^{\vee}}.$ Then $$E_{i,k}\mapsto (\hat{T}_{\omega_i^{\vee}})^{-k}(\mathcal{E}_i)$$ $$F_{i,k}\mapsto (\hat{T}_{\omega_i^{\vee}})^{k}(\mathcal{F}_i)$$ for $i=1,\ldots n$, $k\in \mathbb{Z}$ induces an isomorphism between the two presentations of $U_q\g$.

Let $\alpha_i$ be any simple root of the same length as $\theta$, and $w\in W$ such that $w(\alpha_i)=\theta$. Under the inverse of this isomorphism, $$\mathcal{E}_0\mapsto -o(i)q^{-h_{\theta}}T_w(F_{i,1})=T_wT_{\omega_i^{\vee}}T_i^{-1}(E_{i,0})$$
$$\mathcal{F}_0\mapsto -o(i)T_w(E_{i,-1})q^{h_{\theta}}=T_wT_{\omega_i^{\vee}}T_i^{-1}({F}_{i,0}).$$  This assignment of does not depend on $i$, but only on the choice of the sign map $o(i)$. In either case, $\mathcal{H}_0\mapsto -h_{\theta}.$

\subsection{Dynamical Weyl group}

The following three subsections review the definitions and results from \cite{EV} which we are going to need. 

Dynamical Weyl group of a Kac-Moody algebra (in our case: $U_qL\g$) is a collection of operators $A_{w}(\lambda)$, labeled by the braid group elements $w$ (in our case: the affine braid group $BW^{aff}$) and depending rationally on $\lambda$ (for $q=1$) or $q^{\lambda}$ (for $q\ne 1$). For each integrable representation $V$, $A_{w}(\lambda)$ is an operator on $V$ mapping the the weight space $V[\nu]$ to $V[w\nu]$. 

Let $V$ be such a representation, $v\in V[\nu]$ a weight vector, $\lambda$ a dominant integral weight large enough with respect to $V$, and $M_{\lambda}$ the Verma module with highest weight $\lambda$ and highest weight vector $m_{\lambda}$, and $\rho$ the half sum of positive roots. There is a unique intertwining operator $M_{\lambda}\to M_{\lambda-\nu}\otimes V$ of the form
$$\Phi^v (m_{\lambda})=m_{\lambda-\nu}\otimes v+\textrm{terms with lower weight in the first tensor factor}.$$
The restriction of $\Phi^v$ to $M_{w(\lambda+\rho)-\rho}\subseteq M_{\lambda}$ is an intertwining operator $M_{w(\lambda+\rho)-\rho}\to M_{w(\lambda-\nu+\rho)-\rho}\otimes V$. Then $A_{w}(\lambda)v$ is defined to be the unique vector in $V[w\nu]$ such that 
$$\Phi^v (m_{w(\lambda+\rho)-\rho})=m_{w(\lambda-\nu+\rho)-\rho}\otimes A_{w}(\lambda)v+\textrm{terms with lower weight in the first tensor factor}.$$
This defines $A_{w}(\lambda)v$ for $\lambda$ dominant integral and large enough; it depends on $\lambda$ or $q^{\lambda}$ rationally, and is then extended as a rational function of $\lambda$ or $q^{\lambda}$. In case of loop representations which are not integrable, an analogous definition is made using completed tensor products. 

We will mostly need their so-called unshifted versions, the operators $\mathcal{A}_{w}(\lambda)$ defined as $$\mathcal{A}_{w}(\lambda)=A_{w}(-\lambda-\rho+\frac{1}{2}h),$$
where $h$ takes value $\nu$ on the weight space $V[\nu]$.

The dynamical Weyl group operators satisfy braid-like relations: if $w_1,w_2$ are elements of the Weyl group with $l(w_1w_2)=l(w_1)+l(w_2)$, then 
\begin{align}
\mathcal{A}_{w_1w_2}(\lambda)&=\mathcal{A}_{w_1}(w_2\lambda)\, \mathcal{A}_{w_2}(\lambda) \label{Abraid}
\end{align}
Thus, one can always decompose $\mathcal{A}_{w}$ it into a product of $\mathcal{A}_{s_i}$, for $s_i$ simple reflections, using any decomposition of $w$. By defining 
$\mathcal{A}_{s_i^{-1}}(\lambda)=\mathcal{A}_{s_i}(s_i\lambda)^{-1}$, we can associate an operator to any $w$ in the braid group. 

One can define the \emph{(unshifted) dynamical action} of the braid group on the space of functions of $\lambda$ or $q^{\lambda}$ values in a representation $V$  by \begin{align}(w\star f)(\lambda)=\mathcal{A}_w(w^{-1}\lambda)f(w^{-1}\lambda).\label{dynamicalaction1} \end{align} The fact that the dynamical Weyl group operators satisfy braid-like relations means that $\star$ defines a representation of the braid group (in our case, $BW^{aff}$) on this space of functions.

The operator 
$\mathcal{A}_{w}(\lambda)$ 
can always be decomposed as a product of an operator $A^+_{w}:V[\nu]\to V[w\nu]$ which does not depend on $\lambda$, and an operator $\mathbb{B}_w(\lambda)$ which preserves the weight space $V[\nu]$ and depends rationally on $\lambda$ or $q^\lambda$. 
In fact,
\begin{align}
A^+_{s_i}&=(-1)^{h_i}\mathbb{S}_i \nonumber \\
p_q(m,h,e,f)&=\sum_{k=0}^{\infty} \frac{q^{-k(m+2)}}{[k]_q!}f^ke^k\prod_{j=1}^k\frac{1}{[m-h-j+1]_q} \label{pformula} \\
\mathbb{B}_{s_i}(\lambda)&=p_{q_i}((-\lambda-\rho+\frac{1}{2}h)(\mathcal{H}_i),\mathcal{H}_i,\mathcal{E}_i,\mathcal{F}_i) \nonumber \\
\mathcal{A}_{s_i}(\lambda) &=(-1)^{h_i}\,  \mathbb{S}_i \,  \mathbb{B}_{s_i}(\lambda). \label{Asi}
\end{align}
 
To express $A_w(\lambda)$, for arbitrary $w\in W^{aff}$ in a similar way, let $w=s_{i_1}\ldots s_{i_l}\in W^{aff}$ be any reduced decompositon $\delta$, and define $\alpha^l=\alpha_{i_l}$, $\alpha^j=s_{i_l}\ldots s_{i_{j+1}}(\alpha_{i_j})$, $\mathcal{E}^j=T_{i_l}^{-1}\ldots T_{i_{j+1}}^{-1} (\mathcal{E}_{i_j})=\mathcal{E}^{\delta}_{\alpha^j}$, $\mathcal{F}^j=T_{i_l}^{-1}\ldots T_{i_{j+1}}^{-1} (\mathcal{F}_{i_j})=\mathcal{F}^{\delta}_{\alpha^j}$. Then
\begin{align}
\mathbb{B}_w(\lambda)&=\prod_{j=1}^l p_{q_{i_j}}((-\lambda+\frac{1}{2}h)(\mathcal{H}_{\alpha^j})-1,\mathcal{H}_{\alpha^j}, \mathcal{E}^j, \mathcal{F}^j), \label{Bw}
\end{align}
 where the product is noncommutative and the indices increase from left to right, and
\begin{align}
\mathcal{A}_{w}(\lambda)&=(-1)^{\rho^{\vee}-w\rho^{\vee}} \, \mathbb{S}_w \, \mathbb{B}_w(\lambda). \label{Aw} 
\end{align}

\subsection{Operators $\Pi_i^q$}\label{operatorsPi}
For a fundamental coweight $\omega_i^{\vee} \in P^{\vee}$, write $t^{\omega_i^{\vee}}=\pi_i  w_{i} \in W^{ext}\cong \Pi \ltimes W^{aff}$. For example, if $\omega_i^{\vee}$ is minuscule, $t^{\omega_i^{\vee}}=\pi_i \cdot w_{i}$, $w_{i}=w_{[i]}=w_0w^i_0\in W$, and we obtain all $\pi\ne 1\in \Pi$ in this way. On the other extreme, if $\omega_i^{\vee} \in Q^{\vee}$, then $t^{\omega_i^{\vee}}=1\cdot w_i$, $w_i=t^{\omega_i^\vee}\in W^{aff}$.

This gives us a collection of elements $\pi_1,\ldots \pi_n \in \Pi$ (some of them can be repeated and some of them are trivial $1\in \Pi$), and $w_1,\ldots, w_n\in W^{aff}$. \cite{EV}  define the operators $\Pi_i^q$, which are operator versions of $\pi_1,\ldots \pi_n \in \Pi$. (The indexing $i=1,\ldots ,n$ corresponds to all fundamental coweights, not just the minuscule ones.)

At $q=1$, $\Pi_i$ is the element of (the completed) algebra which acts on any extended loop representation $V[z^{1/D},z^{-1/D}]$ as $$\Pi_i=z^{-\omega_i^{\vee}}(A^+_{w_{i}})^{-1}.$$ Let $\xi_i=\mathrm{Ad}(\Pi_i) $ be the $UL\g$ automorphism given by $\xi_i(a)=\Pi_i a \Pi_i^{-1}$. One can show that, up to a constant, $\xi_i$ equals a permutation of indices given by $\pi_i$. More precisely, there is $x_i\in \tilde{\h}$ such that 
\begin{equation} \label{xi}
\xi_i=T_{\pi_i}\circ Ad(e^{x_i}).
\end{equation}
Label the constants $c_{ij}=e^{\alpha_j(x_i)}$.

At $q\ne 1$, define the automorphism $\xi_i$ by the same formula (\ref{xi}). Then one can show there is a unique $\Pi_i^q$, deforming $\Pi_i$, satisfying  
\begin{equation}\label{conjugbypi}
\xi_i(a)=\Pi^q_i a (\Pi^q_i)^{-1} 
\end{equation}
\begin{equation*}
\Pi^q_i|_{q=1}=\Pi_i \,\,\,\, \textrm{and} \,\,\,\, \det(\Pi^q_i|_{z=1})\textrm{ does not depend on } q.
\end{equation*}

\subsection{Trigonometric dynamical difference equations}\label{ddeq}
Now we can write the trigonometric dynamical difference equations. These are difference equations for functions $\varphi$, depending on $\tilde{\lambda} \in \tilde{\h}^*$ and $N$ complex numbers $\mathbf{z}=(z_1,\ldots z_N)$, and taking values in a completed tensor product of extended loop representations of $U_qL\g$, $V=\overline{V_{1}}\otimes \ldots \otimes \overline{V_{N}}[[\frac{z_2}{z_1}, \ldots \frac{z_N}{z_{N-1}} ]][z_1^{1/D},z_1^{-1/D}, \ldots z_N^{1/D},z_N^{-1/D}]$. There is an interest in those $\varphi$ which satisfy the trigonometric qKZ equations. These are q-difference equations,  written using dynamical R-matrices, and concerning shifts in $z_i\mapsto qz_i$. Trigonometric dynamical difference equations describe shifts in the $\lambda$ variable. 
The reason they are interesting lies in the fact that they commute with qKZ. 

Let $\tilde{\lambda}=(\lambda,k, \delta) \in \tilde{\h}^*$. The operators we are about to define do not depend on $\delta$, and $k$ is fixed. Let $\kappa=d_0 k$. The action of $t^{\omega_i^\vee}\in W^{ext}$ on $\tilde{\h}^*$ is
$$t^{\omega_i^\vee}(\lambda,k, \delta)=(\lambda+\kappa \omega_i^\vee, k, \delta-\left<\lambda,\omega_i^\vee \right>-\kappa \frac{\left<\omega_i^\vee,\omega_i^\vee \right>}{2});$$
in particular it shifts the $\lambda$ component by $\lambda\mapsto \lambda+\kappa \omega_i^\vee$.

Remember that $t^{\omega_i^{\vee}}=\pi_i  w_{i}$, and
define the operators 
\begin{equation}\mathcal{A}_i(\mathbf{z}, \lambda)=\Pi_i^q \mathcal{A}_{w_{i}}(\lambda).\label{A_i}\end{equation} 
These operators satisfy 
\begin{align} \mathcal{A}_i(\mathbf{z}, \lambda+\kappa \omega_j^{\vee})\mathcal{A}_j(\mathbf{z}, \lambda)=\mathcal{A}_j(\mathbf{z}, \lambda+\kappa \omega_i^{\vee})\mathcal{A}_i(\mathbf{z}, \lambda)\label{Aomegaicommute}
\end{align} and commute with qKZ equations (see \cite{EV}).
The trigonometric dynamical difference equations are
\begin{equation}\varphi(\mathbf{z}, \lambda+\kappa \omega_i^{\vee})=\mathcal{A}_i(\mathbf{z}, \lambda)\varphi(\mathbf{z}, \lambda).\label{DDeqns}
\end{equation}

If we define \emph{dynamical action} of $P^{\vee}$ on the space of functions $\varphi$ of $q^{\lambda}$ with values in a representation by 
\begin{align} 
(t^{\omega_i^{\vee}}\star \varphi)(\lambda)&=\mathcal{A}_i(\mathbf{z}, t^{-\omega_i^{\vee}}(\lambda))\varphi(t^{-\omega_i^{\vee}}(\lambda)),\label{dynamicalaction2}
\end{align}
then (\ref{Aomegaicommute}) means that this is indeed a representation of $P^{\vee}$, and the trigonometric dynamical difference equations (\ref{DDeqns}) are the conditions for a function $\varphi$ to be invariant with respect to this action, $\forall \mu \in P^{\vee}$ $t^{\mu}\star \varphi = \varphi$.


\section{Yangians and the trigonometric Casimir equation}

\subsection{Drinfeld's new realization of Yangians}
The Yangian $Y_{\hbar}\g$ is a formal deformation of $U\g[u]$. We shall use Drinfeld's new presentation as its definition, see \cite{CP} or \cite{TL1}.

The Yangian  $Y_\hbar \g$ is the associative unital algebra over $\CC[\hbar]$ with generators $X_{i,r}^{\pm}$, $T_{i,r}$, $i=1\ldots n, r\in \mathbb{N}_0$, and relations:
$$[T_{i,r},T_{j,s}]=0 \,\,\,\,\,\,\,\, [T_{i,0},X_{j,s}^{\pm}]=\pm d_ia_{ij}X_{j,s}^{\pm} \,\,\,\,\,\,\,\,  [X_{i,r}^+,X_{j,s}^-]=\delta_{ij}T_{i,r+s}$$
$$ [T_{i,r+1},X_{j,s}^{\pm}]- [T_{i,r},X_{j,s+1}^{\pm}]=\pm \frac{\hbar}{2} d_i a_{ij} (T_{i,r}X_{j,s}^{\pm}+X_{j,s}^{\pm}T_{i,r} )$$
$$ [X_{i,r+1}^{\pm},X_{j,s}^{\pm}]- [X_{i,r}^{\pm},X_{j,s+1}^{\pm}]=\pm \frac{\hbar}{2} d_i a_{ij} (X_{i,r}^{\pm}X_{j,s}^{\pm}+X_{j,s}^{\pm}X_{i,r}^{\pm})$$
$$\sum_{\sigma} \ad{X_{i,r_{\sigma(1)}}^{\pm}} \ad{X_{i,r_{\sigma(2)}}^{\pm}}\ldots \ad{X_{i,r_{\sigma(1-a_{ij})}}^{\pm}} X_{j,s}^{\pm}=0,$$
where in the last relation $i\ne j$, $r_1,\ldots r_{1-a_{ij}} \in \mathbb{N}_0$ are arbitrary, and the sum is over all $\sigma$ permutations of $\{ 1,2,\ldots 1-a_{ij}\}$.

This is a graded algebra, $Y_\hbar \g=\oplus_{r\ge 0} (Y_\hbar \g)_r$, with the grading given by $\deg X_{i,r}^{\pm}=\deg T_{i,r}=r$, $\deg \hbar=1$. Degree $0$ graded part $(Y_\hbar \g)_0$ is canonically identified with $U\g$, by $t_i\mapsto T_{i,0}$, $x_{i}^{\pm}\mapsto X_{i,0}^{\pm}$, and we will sometimes abuse notation by identifying them. At $\hbar=0$, this algebra becomes $U\g[u]$, with $X_{i,r}^{\pm}\equiv x_{i}^{\pm}u^r, \, T_{i,r}\equiv t_iu^r.$

\subsection{Yangians at $\hbar=1$ and the homomorphism $\Psi$}\label{Psi}
By evaluating $\hbar$ to a particular numerical value $a$, one gets an algebra  $Y_a\g$ over $\CC$. For $a,b\ne 0$, the algebras $Y_a\g$ and $Y_b\g$ are isomorphic, and one frequently considers $Y_1\g$ instead of $Y_\hbar\g$. Let $\Psi: Y_\hbar\g\to Y_1\g\otimes \CC[\hbar]$ be the $\CC[\hbar]$-algebra isomorphism defined on generators as $X_{i,r}^{\pm}\mapsto \hbar^r X_{i,r}^{\pm}$, $T_{i,r}\mapsto \hbar^r T_{i,r}$. It maps $(Y_\hbar\g)_{\ge n}$ to $\hbar^n Y_1\g\otimes \CC[\hbar]$.

\subsection{Trigonometric Casimir equation}
This section describes the trigonometric Casimir equations, as defined in \cite{TL1}.

Remember that $H$ is the maximal torus of the connected simply connected complex algebraic group $G$, corresponding to the Cartan subalgebra $\h\subseteq \g$. Let $H^{\textrm{reg}}=H\setminus \cup_{\alpha \in \mathbf{R}} \{ e^{\alpha}=1\}$. 
Let $$K_{\alpha}=x_\alpha^+x_\alpha^-+x_\alpha^-x_\alpha^+$$
 Let $T(t)_1=\sum_i \omega_i^{\vee} (t_i) T_{i,1}$ be an extension by linearity of the map $\h\hookrightarrow (Y_{\hbar}\g)_1$ given by $t_i\mapsto T_{i,1}$. Define $\tau: \h\to (Y_{\hbar}\g)_1$   
\begin{equation}\tau(t)=
-2\left(T(t)_1 -\frac{\hbar}{2}\sum_i \omega_i^{\vee} (t) t_i^2  \right) \label{tau} \end{equation}
For $\mu\in \h^*$, and $h_{\mu}\in \h$ corresponding to it under the isomorphism $\h^*\cong \h$, define $T(\mu)_1=T(h_{\mu})_1$, and $\tau(\mu)=\tau(h_{\mu})$. 
\begin{theorem}[\cite{TL1}]\label{Casimir1}
Consider the trivial vector bundle with the base $H^{\textrm{reg}}$ and the fibre $Y_{\hbar}\g$. 
The connection on this bundle given by
\begin{eqnarray*}
\nabla &=& d-\frac{1}{b}\left( \sum_{\alpha\in \mathbf{R}_+}\frac{d\alpha}{e^\alpha-1}K_{\alpha}+\sum_i d\alpha_i \tau(\omega_i^{\vee})\right) \\
\end{eqnarray*}
is flat and $W$-equivariant for any $b\in \mathbb{C}^\times$.
 \end{theorem}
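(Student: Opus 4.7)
The plan is to verify flatness and $W$-equivariance separately by reducing each to an algebraic identity in $Y_\hbar \mathfrak{g}$. Write $\nabla = d - A$ with $A = b^{-1}(A_{\mathrm{r}} + A_{\mathrm{C}})$, where $A_{\mathrm{r}} = \sum_{\alpha \in \mathbf{R}_+} \frac{d\alpha}{e^\alpha - 1} K_\alpha$ and $A_{\mathrm{C}} = \sum_i d\alpha_i \cdot \tau(\omega_i^\vee)$. Every coefficient 1-form is closed on $H^{\mathrm{reg}}$, since $\frac{1}{e^\alpha - 1}$ depends only on $e^\alpha$ so $d\bigl(\frac{1}{e^\alpha - 1}\bigr) \wedge d\alpha = 0$; hence $dA = 0$, and flatness reduces to the vanishing of the Yangian-valued 2-form $\tfrac12 [A, A]$. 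I would split this by wedge type into three independent pieces---Cartan--Cartan ($d\alpha_i \wedge d\alpha_j$), mixed ($\frac{d\alpha}{e^\alpha - 1} \wedge d\alpha_i$), and pure root ($\frac{d\alpha}{e^\alpha - 1} \wedge \frac{d\beta}{e^\beta - 1}$)---and handle each by a single identity in $Y_\hbar\mathfrak{g}$.

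The Cartan--Cartan part vanishes immediately from $[\tau(\omega_i^\vee), \tau(\omega_j^\vee)] = 0$: expanding $\tau(\omega_i^\vee) = -2\bigl(T(\omega_i^\vee)_1 - \tfrac{\hbar}{2}\sum_j \omega_j^\vee(\omega_i^\vee) t_j^2\bigr)$ and applying the Yangian relation $[T_{k,r}, T_{l,s}] = 0$ kills both the $T_1$--$T_1$ and $T_1$--$t$ cross terms, while the $t_j^2$ tails commute among themselves. The mixed piece requires computing $[K_\alpha, \tau(\omega_i^\vee)]$; I would use the Drinfeld relation $[T_{j,1}, X_{k,0}^\pm] = [T_{j,0}, X_{k,1}^\pm] \pm \tfrac{\hbar}{2} d_j a_{jk} (T_{j,0} X_{k,0}^\pm + X_{k,0}^\pm T_{j,0})$ to push the degree-one Cartan generators past the root vectors, writing the commutator as a sum of $K$-type terms plus quadratic $U\mathfrak{g}$ corrections. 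The pure-root piece does not vanish on its own; its residual must be matched against the output of the mixed piece via a trigonometric partial-fraction identity that, whenever $\alpha + \beta$ is a root, converts $\frac{d\alpha \wedge d\beta}{(e^\alpha - 1)(e^\beta - 1)}$ into a mixed-type wedge plus a constant-coefficient wedge. The residual constant piece, together with the $\mathfrak{h}$-component of $[K_\alpha, K_\beta]$, is precisely what dictates the specific formula for $\tau$, so the two contributions cancel only with this choice of scalars.

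The main obstacle is this combined bookkeeping: one must track, case by case on whether $\alpha \pm \beta$ is a root, that the mixed and pure-root contributions cancel out. For $W$-equivariance it suffices to check each simple reflection $s_i$, acting on $H^{\mathrm{reg}}$ by pullback and on $Y_\hbar \mathfrak{g}$ via $\mathrm{Ad}$ through the embedded $U\mathfrak{g}$. Since $K_\alpha$ is $\mathfrak{sl}_2$-invariant, $\mathrm{Ad}(s_i) K_\alpha = K_{s_i \alpha}$, and $s_i$ permutes $\mathbf{R}_+ \setminus \{\alpha_i\}$, so $A_{\mathrm{r}}$ is preserved up to reindexing. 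The single exception $\alpha_i \mapsto -\alpha_i$ produces the shift $\frac{d(-\alpha_i)}{e^{-\alpha_i} - 1} = \frac{d\alpha_i}{e^{\alpha_i} - 1} + d\alpha_i$, and the extra term $d\alpha_i \cdot K_{\alpha_i}$ is absorbed by $A_{\mathrm{C}}$ because $\tau$ transforms under $\mathrm{Ad}(s_i)$ with a matching affine correction involving $K_{\alpha_i}$---a compatibility built into the definition of $\tau$ by the quadratic $t_j^2$ summand.
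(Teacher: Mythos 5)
First, note that this theorem is not proved in the paper you are working from: it is quoted from \cite{TL1}, so the relevant comparison is with Toledano Laredo's proof, which is organized around a general flatness criterion for trigonometric connections $d-\sum_{\alpha\in\mathbf{R}_+}\frac{d\alpha}{e^\alpha-1}t_\alpha-\sum_i du_i\,t^i$, expressed in terms of rank-two root subsystems, followed by a Yangian-specific verification of those relations for $t_\alpha=K_\alpha$, $t^i=\tau(\omega_i^\vee)$.

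Your reduction of flatness to $A\wedge A=0$ and your splitting by wedge type is fine, as is $[\tau(\omega_i^\vee),\tau(\omega_j^\vee)]=0$ (immediate from $[T_{i,r},T_{j,s}]=0$). The genuine gap is in the pure-root sector. After applying the partial-fraction identity $\frac{1}{(e^\alpha-1)(e^\beta-1)}=\frac{1}{e^{\alpha+\beta}-1}\bigl(\frac{1}{e^\alpha-1}+\frac{1}{e^\beta-1}+1\bigr)$ inside each rank-two subsystem $\Psi=\mathbf{R}\cap(\mathbb{R}\alpha+\mathbb{R}\beta)$, the surviving products $\frac{1}{(e^\alpha-1)(e^\beta-1)}$ are linearly independent of the single factors $\frac{1}{e^\gamma-1}$ and of constants, so their coefficients cannot cancel against the mixed sector: they must vanish on their own, and this is exactly the Kohno-type identity $[K_\alpha,\sum_{\beta\in\Psi_+}K_\beta]=0$ in $U\g$ (nontrivial for subsystems of type $A_2$, $B_2$, $G_2$, where individual commutators $[K_\alpha,K_\beta]$ do not vanish). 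Your plan never invokes this identity, and your "case by case on whether $\alpha\pm\beta$ is a root" with everything cancelling against the mixed piece would not close. Only the lower-order residues (single $\frac{1}{e^\gamma-1}$ terms) interact with $[K_\gamma,\tau(\omega_i^\vee)]$, and verifying that cancellation is itself nontrivial: the Drinfeld relations give commutators of $T_{j,1}$ only with the simple root vectors $X^\pm_{i,0}$, whereas here one needs $[\tau(u),K_\gamma]$ for arbitrary positive $\gamma$, which requires an additional argument (adjoint action of $U\g\subset Y_\hbar\g$, induction on height, or the $J$-presentation). Finally, your $W$-equivariance step correctly handles the pullback of the root part via $\frac{1}{e^{-\alpha_i}-1}=-1-\frac{1}{e^{\alpha_i}-1}$, but the required transformation law of $\tau$ under $s_i$ (that $\mathrm{Ad}(s_i)\tau(u)-\tau(s_iu)$ is precisely the needed multiple of $K_{\alpha_i}$, which is where the quadratic correction $\frac{\hbar}{2}\sum_i\omega_i^\vee(u)t_i^2$ earns its keep) is asserted rather than proved, and it is exactly the computation that fixes the form of $\tau$; also recall that $W$ acts here only on zero-weight components of integrable representations, so even the statement of equivariance needs the care indicated in the paper's remark. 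As it stands the proposal is a reasonable outline of the shape of the argument, but the two identities that carry the proof --- the rank-two subsystem relations for the $K_\alpha$ and the commutator/equivariance properties of $\tau$ --- are missing or only gestured at.
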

 
 \begin{definition}The trigonometric Casimir equations are differential equations for flat sections of a trigonometric Casimir connection, i.e. for functions $\varphi$ satisfying $\nabla \varphi=0.$
\end{definition}

\begin{remark}
\begin{enumerate}\item 
The statement that this connection is $W$-equivariant makes sense because it takes values in the zero weight space of $Y_\hbar\g$ with respect to the $\g$ action given by the adjoint 
action of $\g \to Y_\hbar \g $, and W acts on the zero weight space of any integrable representation.
\item This connection takes values in an infinite dimensional algebra $Y_{\hbar}\g$. However, we will be interested in evaluating it in a finite dimensional representation $V$, and considering the flat,  $W$-equivariant connection this induces on a trivial $H^{reg}$ bundle with fibers $V$.
\item In particular, the $Y_1\g$ valued connection obtained from the connection in Theorem \ref{Casimir1} by applying $\Psi$ is also flat and  $W$-equivariant. The equations for its flat sections are
\begin{align}
d_\mu \varphi (\lambda)
&=\frac{1}{b}\left( \sum_{\alpha\in \mathbf{R}_+}\frac{\left< \alpha, \mu \right>}{e^{\left< \alpha, \lambda \right>}-1}K_{\alpha}-2 T(\mu)_1+\sum_i \left<\omega_i^{\vee},\mu \right> t_i^2 \right)\varphi (\lambda).
\label{Casimir2}
\end{align}
\end{enumerate}
\end{remark}

\subsection{Degeneration of the quantum loop algebra to Yangian}\label{degeneratesection}

The paper \cite{D} states that the quantum loop algebra degenerates to Yangian, while \cite{GTL1} give a stronger statement that their completions are isomorphic. To state that more precisely, let $\widehat{Y_{\hbar}\g}=\prod_{n\in \NN_0} (Y_{\hbar}\g)_n$ be the completion of  $Y_{\hbar}\g=\oplus_{n\in \NN_0} (Y_{\hbar}\g)_n$ with respect to its grading. Let $\mathcal{J}$ be the kernel of the composition 
$$ U_qL\g \xrightarrow{q\to 1}   UL\g=U\g[z,z^{-1}]  \xrightarrow{z\to 1}  U\g.$$
This ideal induces a filtration of $U_qL\g$ by $U_qL\g\supseteq \mathcal{J} \supseteq \mathcal{J}^2\supseteq \ldots$. Let $\widehat{U_qL\g}=\varprojlim U_qL\g/\mathcal{J}^n$ be the completion with respect to that filtration. 
\begin{theorem}[\cite{GTL1}]\label{ValSachiso}
There exists an explicit $\CC[[\hbar]]$-algebra homomorphism $\Phi: U_qL\g\to \widehat{Y_{\hbar}\g}$ such that: 
\begin{enumerate}
\item $\Phi$ induces an isomorphism $\widehat{\Phi}:\widehat{U_qL\g}\xrightarrow{\sim} \widehat{Y_{\hbar}\g}$ ;
\item $\Phi$ induces Drinfeld's degeneration $\mathrm{gr}\Phi: \mathrm{gr} U_qL\g \xrightarrow{\sim}  Y_{\hbar}\g$.
\end{enumerate}
\end{theorem}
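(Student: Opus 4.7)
The plan is to construct $\Phi$ explicitly on Drinfeld's loop generators of $U_qL\g$, guided by the classical substitution $z=e^u$ together with the quantum deformation $q=e^{\hbar/2}$. On the Cartan generators I would send $H_{i,k}$ to a formal series in the $T_{i,r}$ whose leading term, modulo $\hbar$ and modulo the augmentation ideal, reproduces $H_i z^k$ under $z^k = \sum_{r\ge 0}(ku)^r/r!$; similarly $E_{i,k}, F_{i,k}$ should map to series in the $X_{j,r}^{\pm}$ with $\hbar$-dependent coefficients, suitably twisted so that the trigonometric commutation structure is recovered from the rational one. A natural source for these formulas is to match the eigenvalues of the $H_{i,k}$ on Drinfeld polynomial data of finite-dimensional representations against those of $T_{i,r}$ on the corresponding Yangian $\ell$-weights, and to promote the resulting scalar change-of-variable to a universal assignment.

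The technical core is to check that this assignment extends to an algebra homomorphism $\Phi \colon U_qL\g \to \widehat{Y_\hbar\g}$. Each loop-presentation relation, once $q_i$ is expanded as $e^{\hbar d_i/2}$, produces a family of identities graded by the total $\hbar$-degree; I would verify them order by order, showing that the degree-$N$ part of each relation is a consequence of finitely many Yangian relations. In particular the quadratic $E$-$E$ relation
\[
E_{i,k+1}E_{j,l}-q_i^{a_{ij}}E_{j,l}E_{i,k+1}=q_i^{a_{ij}}E_{i,k}E_{j,l+1}-E_{j,l+1}E_{i,k}
\]
must reduce, at leading order, to the Yangian relation $[X_{i,r+1}^{\pm},X_{j,s}^{\pm}]-[X_{i,r}^{\pm},X_{j,s+1}^{\pm}]=\pm\tfrac{\hbar}{2}d_i a_{ij}(X_{i,r}^{\pm}X_{j,s}^{\pm}+X_{j,s}^{\pm}X_{i,r}^{\pm})$, with higher $\hbar$ corrections absorbed by the twist. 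The $q$-Serre relations would be verified by a similar order-by-order comparison with Yangian Serre relations, using that the quantum integers $[m]_{q_i}$ expand as $m+O(\hbar^2)$.

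Next I would prove that $\Phi$ is continuous in the relevant topologies, hence induces $\widehat{\Phi}\colon \widehat{U_qL\g}\to\widehat{Y_\hbar\g}$. For this it suffices to verify $\Phi(\mathcal{J})\subseteq \prod_{m\ge 1}(Y_\hbar\g)_m$; since $\mathcal{J}$ is generated by $\hbar$, by $H_{i,k}$ for $k\ne 0$, by $H_{i,0}-H_i$, and by the $E_{i,k}-\delta_{k,0}E_i, F_{i,k}-\delta_{k,0}F_i$, and since each of these is sent to an element of strictly positive Yangian degree, the inclusion $\Phi(\mathcal{J}^n)\subseteq\prod_{m\ge n}(Y_\hbar\g)_m$ follows by multiplicativity. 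To promote $\widehat{\Phi}$ to an isomorphism I would pass to the associated graded and identify $\mathrm{gr}\,\Phi$ with Drinfeld's degeneration $\mathrm{gr}\, U_qL\g \xrightarrow{\sim} Y_\hbar\g$, which is already known from \cite{D, GM}; since both filtered algebras are complete and Hausdorff and the associated graded map is a bijection, $\widehat{\Phi}$ is too.

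The main obstacle is pinning down the correct explicit formula for $\Phi$ on the root vectors $E_{i,k}, F_{i,k}$ and then controlling the infinite tail of $\hbar$-corrections that appear when expanding the quadratic and Serre relations. The Cartan part is comparatively rigid because the $H_{i,k}$ commute, but the noncommuting relations among the $E$'s and $F$'s require a genuinely universal matching argument; I expect the cleanest route is to define $\Phi$ through the generating-series form of the Drinfeld currents and to exploit the fact that both algebras admit coproducts and topological Hopf structures under which the loop/current generators transform compatibly, so that relations need only be checked on a generating subset before extending by multiplicativity and continuity to all of $U_qL\g$.
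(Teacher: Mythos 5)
You should first be aware that the paper does not prove this statement at all: it is imported verbatim from \cite{GTL1}, and the paper only records the explicit formulas for $\Phi$ (via the elements $t_{i,r}$, the shift operators $\sigma_i$, and the series $g_i(v)$) because they are needed later. So your proposal is measured against the actual proof in \cite{GTL1}, whose broad skeleton (define $\Phi$ on loop generators, check filtration compatibility, pass to the associated graded) you do reproduce — but with two genuine gaps. First, the entire content of the theorem is the \emph{existence of the explicit} homomorphism, and your outline defers exactly that: you never produce the assignment on $E_{i,k},F_{i,k}$, and your plan to verify the relations ``order by order in $\hbar$'' with corrections ``absorbed by the twist'' is not workable, because until the twist is fixed there is nothing to expand. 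The decisive constraint is the relation $[E_{i,k},F_{j,l}]=\delta_{ij}\frac{\varPsi_{i,k+l}-\varPhi_{i,k+l}}{q_i-q_i^{-1}}$, which forces the precise form
$g_{i}(v)=\left(\tfrac{\hbar}{q_i-q_i^{-1}}\right)^{1/2}\exp\left(\tfrac{\hbar}{2}\sum_{r\ge 0}\tfrac{t_{i,r}}{r!}(-\partial_v)^{r+1}\log\tfrac{v}{e^{v/2}-e^{-v/2}}\right)$
(including the square-root normalization), and in \cite{GTL1} the relations are verified as exact identities of generating series built from this specific kernel, not degreewise in $\hbar$; your sketch never confronts this relation, which is where the construction either succeeds or fails.

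Second, your route to the isomorphism is circular where it needs to be an argument. You propose to deduce part (1) from part (2) and to obtain part (2) by citing \cite{D,GM}; but \cite{GM} only asserts that \emph{some} isomorphism $\mathrm{gr}\,U_qL\g\cong Y_\hbar\g$ exists, not that the associated graded of \emph{your} map $\Phi$ is one. One must prove surjectivity of $\mathrm{gr}\,\Phi$ directly from the explicit formulas (the symbols of $\Phi$ applied to elements of $\mathcal{J}^n$ must generate $(Y_\hbar\g)_n$), and injectivity requires a further step — either a comparison of $\mathrm{gr}\,\Phi$ with the \cite{GM} map on generators or a flatness/torsion-freeness argument over $\CC[[\hbar]]$ together with the classical statement at $\hbar=0$; bijectivity of the graded map is precisely what is being claimed in (2), not something that can be quoted. (A smaller but real slip in the same step: your generating set for $\mathcal{J}$ is wrong — $H_{i,k}$ and $E_{i,k}$ with $k\neq0$ are \emph{not} in $\mathcal{J}$, since they survive to $H_i$, $E_i$ under $q\to1$, $z\to1$; the correct generators are differences such as $H_{i,k}-H_{i,0}$, $E_{i,k}-E_{i,0}$, $F_{i,k}-F_{i,0}$ together with $\hbar$, and it is for these that one checks $\Phi$ lands in strictly positive degree, e.g.\ $\Phi(E_{i,k}-E_{i,0})=(e^{k\sigma_i}-1)g_i(\sigma_i)x_{i,0}^+$.) Once these two points are repaired, your ``complete Hausdorff filtered algebras with bijective associated graded map'' conclusion is a legitimate way to finish.
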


Let us now describe $\Phi$ from the theorem. First, define elements $t_{i,r}$ of $Y_{\hbar}\g$ by 
$$\hbar \sum_{r\ge 0}t_{i,r}v^{-r-1}= \log \left( 1+\hbar \sum_{r\ge 0} T_{i,r}v^{-r-1} \right).$$ Let $\sigma_i$ be the shift operators on $Y_{\hbar}\g$ defined on the generators as
 $\sigma_i T_{j,k}=T_{j,k}$, $\sigma_i X_{j,k}^{\pm}=X_{j,k+\delta_{ij}}^{\pm}$. Define $g_{i,m}$ with $i=1\ldots n$ and $m\in \NN_0$ by the power series
 $$\sum_{m\ge 0} g_{i,m}v^m=g_{i}(v)=\left(\frac{\hbar}{q_i-q_i^{-1}} \right) ^{1/2} \exp \left( \frac{\hbar}{2}\sum_{r\ge 0} \frac{t_{i,r}}{r!}(-\partial_v)^{r+1} \left(\log\left(\frac{v}{e^{v/2}-e^{-v/2}} \right) \right)  \right)$$

Then $\Phi$ is defined on the loop generators of $U_qL\g$ as 
\begin{align*}
\Phi(H_{i,0}) &= d_i^{-1}t_{i,0}= h_i \in \g \subset Y_{\hbar}\g\\
\Phi(H_{i,r})&=\frac{\hbar}{q_i-q_i^{-1}}\sum_{k\ge 0} t_{i,k}\frac{r^k}{k!}, \,\,\,\,\,\, r\ne 0\\
\Phi(E_{i,r})&= e^{r\sigma_i} g_i(\sigma_i) X_{i,0}^+=e^{r\sigma_i} \sum_{m\ge 0} g_{i,m}X_{i,m}^+ \\
\Phi(F_{i,r})&= e^{r\sigma_i} g_i(\sigma_i) X_{i,0}^-=e^{r\sigma_i} \sum_{m\ge 0} g_{i,m}X_{i,m}^-.
\end{align*}

It is easy to check that at $\hbar=0$, $q=1$, this map $\Phi: U\g[z,z^{-1}]\to  U\g[[u]]$ is given by $\Phi(xz^n)=xe^{nu}$.

\begin{remark}
As the main theorem of this paper concerns the degeneration of the quantum affine algebra to Yangian and not the full isomorphism between their completions (which is a stronger statement), it would be possible to prove it using the results from \cite{GM}. However, in its proof we use the explicit map $\Phi$ for convenience. 
\end{remark}


\section{The main theorem}

\subsection{Statement}\label{statement}

In Section \ref{ddeq} we wrote the operators $\mathcal{A}_i(\mathbf{z},\lambda) \in \widehat{U_q L\g}$, and the difference equations for a shift by a minuscule coweight, $\varphi(\mathbf{z}, \lambda+\kappa\omega_i^{\vee})=\mathcal{A}_i(\mathbf{z}, \lambda)\varphi(\mathbf{z}, \lambda).$ 
Let $\mu \in P^{\vee}$ be an arbitrary coweight, and write $\mu =\sum_{i=1}^n m_i \omega_i^{\vee}$, with $m_i=\left< \mu, \alpha_i \right> \in \mathbb{Z}$. Define the operator $\mathcal{A}^{\mu}(\mathbf{z},\lambda)$ of shift by $\mu$ as a composition of $\mathcal{A}_i(\mathbf{z},\lambda)$ inductively on $\sum_{i=1}^n \left| m_i \right|$ as follows:
\begin{eqnarray*}
\textrm{if } \mu=\mu'+\omega_i^{\vee} & \,\,\,\,\,\,& \mathcal{A}^{\mu}(\mathbf{z},\lambda)=\mathcal{A}^{\mu'}(\mathbf{z},\lambda+\kappa\omega_i^{\vee})\mathcal{A}_i(\mathbf{z},\lambda)
=\mathcal{A}^{\mu'}(\mathbf{z},t^{\omega_i^{\vee}} (\lambda))\mathcal{A}_i(\mathbf{z},\lambda)\\
\textrm{if } \mu=\mu'-\omega_i^{\vee} & \,\,\,\,\,\,& \mathcal{A}^{\mu}(\mathbf{z},\lambda)=\mathcal{A}^{\mu'}(\mathbf{z},\lambda-\kappa\omega_i^{\vee})\mathcal{A}_i^{-1}(\mathbf{z},\lambda-\kappa\omega_i^{\vee})=\mathcal{A}^{\mu'}(\mathbf{z},t^{-\omega_i^{\vee}} (\lambda))\mathcal{A}_i^{-1}(\mathbf{z},t^{-\omega_i^{\vee}} (\lambda))\\
\end{eqnarray*} 
This is well defined and independent on the ordering of the multiset $\{m_i\omega_i \} _i$, due to (\ref{Aomegaicommute}).
In particular, if $\mu=\mu_+-\mu_-$ with $\mu_+,\mu_-\in \sum_{i}\mathbb{N}_0\omega_i$ both dominant, then 
\begin{align}\label{dominant}
\mathcal{A}^{\mu}(\mathbf{z},\lambda)&=\mathcal{A}^{\mu_+}(\mathbf{z},\lambda-\kappa\mu_-)\mathcal{A}^{\mu_-}(\mathbf{z},\lambda-\kappa\mu_-)^{-1}.
\end{align}

The operator $\mathcal{A}^{\mu}(\mathbf{z},\lambda)$ lies in $\widehat{U_q L\g}$. If $\varphi$ is a function with values in a representation such that $\varphi$ satisfies the system of trigonometric dynamical difference equations $\varphi(\mathbf{z}, \lambda+\kappa \omega_i^{\vee})=\mathcal{A}_i(\mathbf{z}, \lambda)\varphi(\mathbf{z}, \lambda),$  then $\varphi$ also satisfies
 $$ \varphi(\mathbf{z}, \lambda+\kappa\mu)=\mathcal{A}^\mu(\mathbf{z}, \lambda)\varphi(\mathbf{z}, \lambda)$$ for any $\mu \in P^{\vee}$. We shall mostly focus on the case when $\mu \in Q^{\vee} \subseteq P^{\vee}$. The main theorem of this paper is the following. 

\begin{theorem}\label{main}
Let $\mu \in Q^{\vee}$ be any coroot of $\g$, and $\mathcal{A}^{\mu}(\mathbf{z},\lambda) \in \widehat{U_q L\g}$ the operator of shift by $\mu$. The image under $\Phi$ in the filtered algebra $\widehat{Y_\hbar \g}$ of the rescaled operator $\mathcal{A}^{\mu}(\mathbf{z},\frac{\lambda}{\hbar}) $ is  
$$\Phi (\mathcal{A}^{\mu}(\mathbf{z},\frac{\lambda}{\hbar}) )=1+\frac{1}{2}\left( \hbar \sum_{\alpha \in \mathbf{R}_+} \frac{\left<\alpha,\mu\right>}{e^{-\left< \lambda, \alpha \right>}-1} 2x_{\alpha}^-x_{\alpha}^+ +\tau(h_{\mu}) \right) \mod Y_{\hbar}\g_{\ge 2},$$
with $\tau:\h\to (Y_{\hbar}\g)_{1}$ given by (\ref{tau}) as
$$\tau(h)=-2T(h)_1+\hbar\sum_i \omega_i^{\vee}(h)t_i^2.$$
\end{theorem}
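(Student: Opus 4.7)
My plan is to combine the inductive/additivity structure of $\mathcal{A}^\mu$ in $\mu$ with the explicit product formulas (\ref{Aw})--(\ref{Bw}) for the dynamical Weyl group operators, and then to expand the result under $\Phi$ modulo $Y_\hbar\g_{\ge 2}$.

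First I would establish \emph{additivity in $\mu$ at linear order}. The claimed right-hand side is $\CC$-linear in $\mu$ because $\tau$ is linear and the $\mu$-dependence in the sum is through $\langle \alpha,\mu\rangle$. On the other hand, the inductive definition gives
$$\mathcal{A}^{\mu_1+\mu_2}(\mathbf{z},\lambda/\hbar) = \mathcal{A}^{\mu_1}\!\bigl(\mathbf{z},(\lambda+\hbar\kappa\mu_2)/\hbar\bigr)\,\mathcal{A}^{\mu_2}(\mathbf{z},\lambda/\hbar).$$
Since the degree-$1$ part of each $\Phi(\mathcal{A}^{\mu_i})$ already carries one power of $\hbar$, the shift of $\lambda$ by $\hbar\kappa\mu_2$ only affects degree-$\ge 2$ corrections, so modulo $Y_\hbar\g_{\ge 2}$ we have
$$\Phi(\mathcal{A}^{\mu_1+\mu_2}(\mathbf{z},\lambda/\hbar))-1 \equiv \bigl(\Phi(\mathcal{A}^{\mu_1})-1\bigr)+\bigl(\Phi(\mathcal{A}^{\mu_2})-1\bigr).$$
It therefore suffices to verify the theorem on a $\ZZ$-generating set of $Q^\vee$; the simple coroots $\alpha_i^\vee$, or $\theta^\vee$ together with its $W$-translates, are natural choices.

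For such a $\mu$, $t^\mu\in W^{\mathrm{aff}}$ is a pure translation, so after using the conjugation relations between the $\Pi_i^q$ and the $\mathcal{A}_{w_i}$, the operator $\mathcal{A}^\mu(\mathbf{z},\lambda)$ reduces (up to a sign) to the dynamical Weyl group operator $\mathcal{A}_{t^\mu}(\lambda)= (-1)^{\rho^\vee-t^\mu\rho^\vee}\mathbb{S}_{t^\mu}\mathbb{B}_{t^\mu}(\lambda)$ of (\ref{Aw}). Choosing a reduced expression for $t^\mu$ in $W^{\mathrm{aff}}$, the factor $\mathbb{B}_{t^\mu}(\lambda)$ is a noncommutative product of $p_{q_{i_j}}(\cdots)$ indexed by the associated positive affine roots $\alpha^{(j)}=(\alpha'_j,0,n_j)$. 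Modulo $Y_\hbar\g_{\ge 2}$ only the $k=0$ and $k=1$ terms in each $p_{q_{i_j}}$ from (\ref{pformula}) survive; writing $q=e^{\hbar/2}$, substituting $\lambda\mapsto \lambda/\hbar$, and applying $\Phi$, the $k=1$ contribution of each affine root yields $\hbar$ times a rational function of $e^{-\langle\lambda,\alpha'_j\rangle}$, multiplied by $\Phi(\mathcal{F}^j\mathcal{E}^j)\equiv x^-_{\alpha'_j}x^+_{\alpha'_j}$ modulo higher order. Grouping the affine roots sharing a common finite part $\alpha'\in\mathbf{R}_+$ and summing the resulting series in $n$ collapses to the coefficient $\langle\alpha',\mu\rangle/(e^{-\langle\lambda,\alpha'\rangle}-1)$. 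The overall prefactor $(-1)^{\rho^\vee-t^\mu\rho^\vee}\mathbb{S}_{t^\mu}$, expanded to first order in $\hbar$ via (\ref{quantumWeyl}) together with the explicit form of $\Phi(H_{i,1})$ from Section \ref{degeneratesection}, contributes the Cartan piece $\tfrac12\tau(h_\mu)$, matching the definition (\ref{tau}).

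The main obstacle is the detailed bookkeeping in the previous step: identifying the images under $\Phi$ of the affine root vectors $\mathcal{E}^j,\mathcal{F}^j$ at the correct order in $\hbar$, resumming the contributions of the affine roots with a common finite part to produce the trigonometric factor $1/(e^{-\langle\lambda,\alpha\rangle}-1)$, and tracking all the sign and normalization constants ($d_i$, $q_i$, and $(-1)^{\rho^\vee-t^\mu\rho^\vee}$) so as to recover the precise coefficient $\tfrac12$ and the particular combination $-2T(h_\mu)_1 + \hbar\sum_i\omega_i^\vee(h_\mu)t_i^2$ that defines $\tau(h_\mu)$. Independence of the computation from the reduced decomposition chosen for $t^\mu$ is guaranteed abstractly by the braid-like relation (\ref{Abraid}), but this independence must be respected through the explicit formulas.
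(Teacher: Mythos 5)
Your overall skeleton (factor $\mathcal{A}^{\mu}$ through the dynamical Weyl group as $\mathbb{S}$--part times $\mathbb{B}$--part, compute each under $\Phi$ modulo $Y_\hbar\g_{\ge 2}$, and use multiplicativity in $\mu$ at first order) matches the paper, and your treatment of the $\mathbb{B}$--factor is essentially the paper's: only $k=0,1$ terms of $p_{q_{i_j}}$ survive, and the affine roots of $t^\mu$ with a fixed finite part $\alpha$ occur exactly $\left<\alpha,\mu\right>$ times (it is a finite multiplicity count, not a series to be resummed). But there is a genuine gap at the heart of the argument: you claim the prefactor $(-1)^{\rho^\vee-t^\mu\rho^\vee}\mathbb{S}_{t^\mu}$, ``expanded to first order in $\hbar$ via (\ref{quantumWeyl}) together with the explicit form of $\Phi(H_{i,1})$,'' contributes $\tfrac12\tau(h_\mu)$. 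This is not a routine expansion and your sketch would not produce the term $-2T(h_\mu)_1$: expanding each $q$-exponential in (\ref{quantumWeyl}) to first order with only the degree-zero images of $\mathcal{E}_i,\mathcal{F}_i,\mathcal{H}_i$ yields no degree-one Yangian generator at all ($\Phi(H_{i,1})$ does not even enter the Kac--Moody generators appearing in $\mathbb{S}_i$). The $T_1$-term comes from the degree-one components $e^{(1)},f^{(1)}$ hidden inside $\Phi(\mathcal{E}_0),\Phi(\mathcal{F}_0)$ for the affine node, and extracting it requires the explicit computation of the product of exponentials in the Yangian (the paper's Lemmas \ref{QuantumWeylAtqvs1} and \ref{QuantumWeylSl2}) together with a reduction of a general lattice element $t^\mu$ to the $\mathfrak{sl}_2$ case via Beck's embedding (Lemma \ref{ShortCoroot}). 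Without some substitute for this computation your Cartan piece is unsupported.

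Two further points. First, the undetermined scalars cannot simply be ``tracked'': the identification of $\mathcal{A}^\mu(\mathbf{z},\lambda)$ (a product of $\Pi_i^q\mathcal{A}_{w_i}$) with $\mathcal{A}_{t^\mu}(\lambda)$ holds only up to a weight-space--dependent scalar coming from (\ref{1.6}), whose constants $c_{ji}$ are not explicitly known, and the $\mathbb{S}$--computation contributes its own sign $C''$; the paper pins the product down to $1$ by a separate evaluation at $q=1$, $\lambda\to\infty$ (Lemma \ref{C=1}), and your proposal has no counterpart of this step, which is needed to get the theorem on the nose rather than up to a scalar. Second, your reduction to a $\ZZ$-generating set of $Q^\vee$ clashes with your use of (\ref{Aw})--(\ref{Bw}): simple coroots (in rank $\ge 2$) and most $W$-translates of $\theta^\vee$ are not dominant, so $t^\mu$ is not a positive word in the $s_i$ and the decomposition into $\mathbb{S}_{i_1}\cdots\mathbb{S}_{i_l}\mathbb{B}_{s_{i_1}\cdots s_{i_l}}(\lambda)$ indexed by positive affine roots is not available; the paper handles this by proving the statement for all dominant $\mu\in Q^\vee$ first and then writing $\mu=\mu_+-\mu_-$, and some such device is needed in your argument as well.
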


\subsection{Degeneration of difference equations to differential equations}

We are mostly interested in the implications Theorem \ref{main} has on the differential and difference equations defined by the operators from Theorem \ref{main}. Let ${V}_i$, $i=1\ldots N$ be finite dimensional representations of $Y_1 \g$, and consider their pullbacks $\Psi^* V_i$ to representations of $Y_\hbar \g$ and  $(\Psi\circ\Phi)^* V_i$ to representations of $U_qL\g$, and associated loop representations. All functions and functional equations in this subsection take values in the (completed) tensor product of these loop representations, considered as a representation of $U_qL\g$ or $Y_1 \g$. Let $$D_\mu(\lambda) =\frac{1}{2\kappa}\left( -2T(h_\mu)_1+\sum_i \omega_i^{\vee}(h_\mu)t_i^2 + \sum_{\alpha \in \mathbf{R}_+} \frac{\left<\alpha,\mu\right>}{e^{-\left< \lambda, \alpha \right>}-1} 2x_{\alpha}^-x_{\alpha}^+ \right)\in Y_1\g.$$

\begin{corollary}\label{MyOp}
Under Drinfeld's degeneration \cite{D} described in Section \ref{degeneratesection}, trigonometric dynamical difference equations from \cite{EV}, described by (\ref{DDeqns})
$$f(\lambda+\kappa \mu)=\mathcal{A}^{\mu}(\mathbf{z},\lambda)f(\lambda)$$ 
degenerate to differential equations
$$d_\mu g(\lambda) =D_\mu (\lambda) g(\lambda),$$
in the sense that $$\lim_{\hbar\to 0} \Psi\circ \Phi \left( \frac{\mathcal{A}^{\mu}(\mathbf{z},\frac{\lambda}{\hbar})-1 }{\hbar \kappa}\right)=D_\mu (\lambda).$$
\end{corollary}
\begin{proof}
 Theorem \ref{main}, the fact that $\Psi(T(h_\mu)_1)=\hbar T(h_\mu)_1$, and  $\Psi(Y_\hbar\g_{\ge 2}) =\hbar^2 Y_1\g \otimes \CC[\hbar] $ directly imply that
 \begin{equation*} (\Psi \circ \Phi) (\mathcal{A}^{\mu}(\mathbf{z},\frac{\lambda}{\hbar}))=1+\frac{\hbar}{2}\left( -2T(h_\mu)_1+\sum_i \omega_i^{\vee}(h_\mu)t_i^2 + \sum_{\alpha \in \mathbf{R}_+} \frac{\left<\alpha,\mu\right>}{e^{-\left< \lambda, \alpha \right>}-1} 2x_{\alpha}^-x_{\alpha}^+ \right) \mod \hbar^2.  
 \end{equation*}
 This can be rewritten as
 $$\frac{(\Psi \circ \Phi) (\mathcal{A}^{\mu}(\mathbf{z},\frac{\lambda}{\hbar}))-1}{\hbar \kappa}=D_\mu (\lambda)  \mod \hbar, $$
 which is the claim of the corollary.
\end{proof}

The equation $d_\mu g(\lambda) =D_\mu(\lambda) g(\lambda)$ 
is equivalent to the trigonometric Casimir equation (\ref{Casimir2}), in the sense that they differ by rescaling and a change of variables $\lambda\mapsto -\lambda$.  

\begin{corollary}\label{ValOp}
If $g(\lambda)$ satisfies the differential equation $d_\mu g(\lambda) =D_\mu(\lambda) g(\lambda),$ from Corollary \ref{MyOp}, then $$\tilde{g}(\lambda) =g(-\lambda) \cdot \prod_{\alpha\in \mathbf{R}_+}\left(e^{-\left< \lambda, \alpha \right>} -1\right)^{-\frac{1}{2\kappa}t_\alpha}$$ satisfies the trigonometric Casimir equation  (\ref{Casimir2})  with $b=-2\kappa$, given by
\begin{align*}
d_\mu \tilde{g}(\lambda) =\frac{1}{-2\kappa}\left( -2T(h_\mu)_1+\sum_i \omega_i^{\vee}(h_\mu)t_i^2 + \sum_{\alpha \in \mathbf{R}_+} \frac{\left<\alpha,\mu\right>}{e^{\left< \lambda, \alpha \right>}-1} K_{\alpha} \right)\tilde{g}(\lambda).
\end{align*}
\end{corollary}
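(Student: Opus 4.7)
The plan is to derive Corollary \ref{ValOp} directly from Corollary \ref{MyOp} by combining a change of variable $\lambda\mapsto -\lambda$ with a Cartan-valued gauge transformation; no further results about $U_qL\g$ or the Yangian are needed.

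First I would set $h(\lambda):=\lim_{\hbar\to 0}f(-\lambda/\hbar)$, which equals $g(-\lambda)$ in the notation of Corollary \ref{MyOp}. Since $d_\mu h(\lambda)=-(d_\mu g)(-\lambda)$ and the substitution $\lambda\mapsto -\lambda$ converts $e^{-\left<\lambda,\alpha\right>}-1$ into $e^{\left<\lambda,\alpha\right>}-1$, Corollary \ref{MyOp} applied at $-\lambda$ yields
\begin{equation*}
d_\mu h(\lambda) \,=\, \frac{1}{-2\kappa}\left(-2T(h_\mu)_1 + \sum_i \omega_i^{\vee}(h_\mu)t_i^2 + \sum_{\alpha\in \mathbf{R}_+} \frac{\left<\alpha,\mu\right>}{e^{\left<\lambda,\alpha\right>}-1}\cdot 2x_\alpha^-x_\alpha^+\right) h(\lambda).
\end{equation*}
Comparing with the target equation (\ref{Casimir2}), the only discrepancy is that $K_\alpha = 2x_\alpha^-x_\alpha^+ + t_\alpha$ via the commutator $[x_\alpha^+, x_\alpha^-] = t_\alpha$, which follows from the normalization $\left<x_\alpha^+,x_\alpha^-\right>=1$ fixed in Section \ref{preliminaries}.

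Next, I would introduce the gauge factor $S(\lambda) := \prod_{\alpha \in \mathbf{R}_+} (e^{-\left<\lambda,\alpha\right>}-1)^{-\frac{1}{2\kappa}t_\alpha}$, well defined since all $t_\alpha\in \h$ mutually commute. Writing $S(\lambda)=\exp\!\bigl(-\tfrac{1}{2\kappa}\sum_\alpha t_\alpha \phi_\alpha(\lambda)\bigr)$ with $\phi_\alpha(\lambda)=\log(e^{-\left<\lambda,\alpha\right>}-1)$, a direct calculation gives $d_\mu\phi_\alpha(\lambda)=\frac{\left<\mu,\alpha\right>}{e^{\left<\lambda,\alpha\right>}-1}$ and hence
\begin{equation*}
d_\mu S(\lambda) \,=\, \frac{1}{-2\kappa}\sum_{\alpha\in\mathbf{R}_+}\frac{\left<\alpha,\mu\right>}{e^{\left<\lambda,\alpha\right>}-1}\, t_\alpha \cdot S(\lambda).
\end{equation*}

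Finally, I would set $\tilde g(\lambda):=S(\lambda)h(\lambda)$ and apply the product rule. The crucial observation is that $S(\lambda)$ commutes with the entire right-hand side of the Casimir equation: the operators $T(h_\mu)_1$, $t_i^2$, and $K_\alpha$ all lie in the zero weight space of $Y_\hbar\g$ for the adjoint $\g$-action (one checks $[t_\beta,K_\alpha]=0$ using $[t_\beta,x_\alpha^\pm]=\pm\left<\alpha,\beta\right>x_\alpha^\pm$), and Cartan elements commute with the zero weight space. Thus $S\cdot(\textrm{Casimir op})\cdot h = (\textrm{Casimir op})\cdot\tilde g$, and the $t_\alpha$-term produced by $d_\mu S$ combines with the $2x_\alpha^-x_\alpha^+$ term in the $h$-equation to assemble $K_\alpha$, yielding (\ref{Casimir2}) with $b=-2\kappa$ verbatim. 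No step should be genuinely hard; the only subtleties are the sign produced by $\lambda\mapsto -\lambda$, the identity $K_\alpha-2x_\alpha^-x_\alpha^+=t_\alpha$, and the commutation of the Cartan-valued gauge with the zero-weight Casimir operator.
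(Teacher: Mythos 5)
Your proposal is correct and follows essentially the same route as the paper: apply Corollary \ref{MyOp} at $-\lambda$, gauge by the Cartan-valued factor $\prod_{\alpha\in\mathbf{R}_+}(e^{-\left<\lambda,\alpha\right>}-1)^{-\frac{1}{2\kappa}t_\alpha}$, whose logarithmic derivative contributes $\frac{1}{-2\kappa}\sum_{\alpha}\frac{\left<\alpha,\mu\right>}{e^{\left<\lambda,\alpha\right>}-1}t_\alpha$, and recombine via $K_\alpha=2x_\alpha^-x_\alpha^++t_\alpha$. You merely spell out the details (the sign from $\lambda\mapsto-\lambda$ and the commutation of the Cartan gauge with the zero-weight Casimir operator) that the paper summarizes as a direct calculation.
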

\begin{proof}
This is a direct calculation using that $2x_{\alpha}^-x_{\alpha}^+=x_{\alpha}^-x_{\alpha}^++x_{\alpha}^+x_{\alpha}^- -t_{\alpha}=K_{\alpha} -t_{\alpha}$, and that multiplying by $\prod_{\alpha\in \mathbf{R}_+}\left(e^{-\left< \lambda, \alpha \right>} -1\right)^{\frac{1}{-2\kappa}t_\alpha}$ adds a summand 
$\frac{1}{-2\kappa}\sum_{\alpha \in \mathbf{R}_+} \frac{\left<\alpha,\mu\right>}{e^{\left< \lambda, \alpha \right>}-1} t_{\alpha}$ to the right hand side of the equation. 
\end{proof}

\begin{remark}
Many questions related to solutions of dynamical difference equations and the trigonometric Casimir equations are open. (For example calculating their monodromy is interesting.) The behaviour of these functions at infinity can in general be complicated. Let us state these corollaries with additional (nontrivial!) simplifying assumptions on the asymptotic behaviour of the solutions.

Let $f(\lambda)$ be a function of $q^{\lambda}$, $\lambda \in \h^*$ with values in a (completed) tensor product of loop representations associated to ${V}_i$. 
Assume $f$ satisfies the trigonometric dynamical difference equations, $f(\lambda+\kappa \mu)=\mathcal{A}^{\mu}(\mathbf{z},\lambda)f(\lambda),$ and
that (this is a nontrivial assumption!) $g(\lambda)=\lim_{\hbar\to 0} f(\frac{\lambda}{\hbar})$ exists and is well behaved. For example, it is enough to assume that $f(\frac{\lambda}{\hbar})=g(\lambda)+\hbar g_1(\lambda)+O(\hbar^2)$, with $g,g_1$ continuous. It can be thought of as a function of $e^\lambda\in H$. Pass to functions with values in a representation of $Y_1\g$ on the same tensor product of loop representations. The difference equation for $f(\frac{\lambda}{\hbar})$ is
$$f(\frac{\lambda+\hbar \kappa \mu}{\hbar})=\left( \Psi\circ \Phi \right) \left( \mathcal{A}^{\mu}(\mathbf{z},\frac{\lambda}{\hbar})\right) f(\frac{\lambda}{\hbar})=
\left( 1+\hbar \kappa D_\mu(\lambda) \right) f(\frac{\lambda}{\hbar}) \mod \hbar^2.$$
After rearranging and taking $\lim_{\hbar \to 0}$, this becomes $d_\mu g(\lambda) =D_\mu(\lambda) g(\lambda).$

So, under the appropriate assumptions on the behaviour of $f(\frac{\lambda}{\hbar})$ around infinity, the solution $f(\lambda)$ of the trigonometric dynamical difference equations produces a solution $$\tilde{g}(\lambda) =\lim_{\hbar\to 0}f( -\frac{\lambda}{\hbar}) \cdot \prod_{\alpha\in \mathbf{R}_+}\left(e^{-\left< \lambda, \alpha \right>} -1\right)^{-\frac{1}{2\kappa}t_\alpha}$$ of the  trigonometric Casimir equation  (\ref{Casimir2}) with $b=-2\kappa$.

\end{remark}

\section{Proof of the main theorem}
The rest of the paper is the proof of Theorem \ref{main}.

\subsection{Dynamical Weyl operators and the extended braid group}


We saw in (\ref{dynamicalaction1}) and (\ref{dynamicalaction2}) that by assigning operators $\mathcal{A}_w(\lambda)$ and $\mathcal{A}_i(\mathbf{z},\lambda)$ to $w\in BW^{aff}\subseteq  BW^{ext}$ and $\omega_i^{\vee}\in P^{\vee}\subseteq  BW^{ext}$ one can define the dynamical action $\star$ of $BW^{aff}$ and $P^{\vee}$ on the space of functions of $\lambda$ with values in a representation of $U_qL\g$. We would like to extend this action to the whole $BW^{ext}$, through assignments $\pi_i\mapsto \Pi_i^q$, $s_i\mapsto \mathcal{A}_{s_i}(\lambda)$. However, it turns out that this is not a representation of $BW^{ext}$, as the relations from $BW^{ext}$ hold only up to a constant.

\begin{proposition}\label{BraidRelations}
The operators $\mathcal{A}_{s_i}(\lambda)$ and $\Pi_i^q$ satisfy
\begin{align}
\Pi_i^q \Pi_j^q&=\Pi_j^q \Pi_i^q \label{1.4} \\ 
\mathcal{A}_{s_i}(\lambda) & \textrm{ satisfy braid like relations } \label{1.5} \\
\Pi_j^q  \mathcal{A}_{s_i}(\lambda)  (\Pi_j^q)^{-1}&=
(c_{ji})^{\mathcal{H}_i} \mathcal{A}_{\pi_j(s_i)}(\pi_j(\lambda)) \label{1.6}
\end{align}
where $c_{ji}=e^{\alpha_i(x_j)}$ is given by (\ref{xi}), and in particular $(c_{ji})^{\mathcal{H}_i}$ is a scalar function on the weight spaces not depending on $\lambda$ nor $q$. Comparing this to relations in \ref{braidgpsection} defining $BW^{ext}$, $\pi_i\mapsto \Pi_i^q$, $s_i\mapsto \mathcal{A}_{s_i}(\lambda)$ assigns an operator to every reduced decomposition of $w\in BW^{ext}$, and operators assigned to two reduced decompositions of the same element differ by  a multiplicative scalar. \end{proposition}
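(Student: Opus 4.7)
The plan is to tackle the three identities (\ref{1.4})--(\ref{1.6}) separately, then derive the conclusion about reduced decompositions from the presentation of $BW^{ext}$ in Section \ref{braidgpsection}.

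\textbf{For (\ref{1.4}).} The idea is to use the uniqueness characterisation of $\Pi_i^q$ from the end of Section \ref{operatorsPi}. Since $\pi_i, \pi_j$ commute in the abelian group $\Pi \cong P^\vee/Q^\vee$ and $x_i, x_j$ lie in the Cartan, the automorphisms $\xi_i, \xi_j$ from (\ref{xi}) commute in $\mathrm{Aut}(U_qL\g)$. First I would show that $(\Pi_j^q)^{-1}\Pi_i^q \Pi_j^q$ satisfies the three defining conditions of $\Pi_i^q$: its adjoint action is $\xi_j^{-1}\xi_i\xi_j = \xi_i$; at $q = 1$ it specialises to $\Pi_j^{-1}\Pi_i\Pi_j = \Pi_i$ (a direct check from the explicit formula $\Pi_i = z^{-\omega_i^\vee}(A^+_{w_i})^{-1}$, since the two lattice-translation and $W$-parts already commute at $q=1$); and its determinant at $z=1$ is $q$-independent because each factor has this property. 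Uniqueness then forces commutation.

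\textbf{For (\ref{1.5}).} This is a direct consequence of the cocycle identity (\ref{Abraid}). For any braid relation $s_i s_j s_i \cdots = s_j s_i s_j \cdots$ of length $m_{ij}$ in the finite Weyl group, iterating (\ref{Abraid}) rewrites both composites as $\mathcal{A}_w(\lambda)$ for their common value $w \in W$, so they coincide as rational operator-valued functions of $\lambda$.

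\textbf{For (\ref{1.6}).} This is the main calculation. I would write $\mathcal{A}_{s_i}(\lambda) = (-1)^{h_i}\mathbb{S}_i \mathbb{B}_{s_i}(\lambda)$ using (\ref{Asi}) and apply $\xi_j = T_{\pi_j}\circ \mathrm{Ad}(e^{x_j})$ factor by factor. From (\ref{pformula}), $\mathbb{B}_{s_i}(\lambda)$ is a series in $\mathcal{F}_i^{k}\mathcal{E}_i^{k}$ with coefficients depending only on $\lambda$ and $\mathcal{H}_i$; since $\mathrm{Ad}(e^{x_j})$ scales $\mathcal{E}_i \mapsto c_{ji}\mathcal{E}_i$, $\mathcal{F}_i \mapsto c_{ji}^{-1}\mathcal{F}_i$ and fixes the Cartan, $\mathbb{B}_{s_i}(\lambda)$ is $\mathrm{Ad}(e^{x_j})$-invariant. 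For $\mathbb{S}_i$, a weight-space calculation using $\mathbb{S}_i\colon V[\nu]\to V[s_i\nu]$ and $s_i\nu(x_j) - \nu(x_j) = -\nu(h_i)\alpha_i(x_j)$ yields $\mathrm{Ad}(e^{x_j})(\mathbb{S}_i) = c_{ji}^{-\mathcal{H}_i}\mathbb{S}_i$, and commuting this scalar past $\mathbb{S}_i$ using $\mathbb{S}_i\mathcal{H}_i\mathbb{S}_i^{-1} = -\mathcal{H}_i$ produces exactly the prefactor $c_{ji}^{\mathcal{H}_i}$ demanded by (\ref{1.6}). Applying $T_{\pi_j}$ afterwards relabels every generator indexed by $i$ to $\pi_j(i)$ (and preserves $q_i$, since $\pi_j$ is a length-preserving Dynkin diagram automorphism). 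The claim then reduces to the identity
\[ (-\lambda - \rho + \tfrac{1}{2}h)(\mathcal{H}_i) \;=\; (-\pi_j(\lambda) - \rho + \tfrac{1}{2}h)(\mathcal{H}_{\pi_j(i)}) \]
on the appropriate weight spaces, which follows from $\pi_j(\alpha_i) = \alpha_{\pi_j(i)}$ in $\tilde\h^{*}$ and $\pi_j$-invariance of $\rho$.

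\textbf{Conclusion and main obstacle.} Once (\ref{1.4})--(\ref{1.6}) are established, each defining relation of the presentation $BW^{ext} = \Pi \ltimes BW^{aff}$ recalled in Section \ref{braidgpsection} is respected up to the scalar $c_{ji}^{\mathcal{H}_i}$, so any two reduced decompositions of $w \in BW^{ext}$ produce operators differing by such a scalar. The hard part is the last step of (\ref{1.6}): carefully tracking how the dynamical parameter transforms under the composition of $T_{\pi_j}$ (which permutes operator-valued generators but treats the $\lambda$-dependent coefficient as a scalar) together with the weight-space shift induced by conjugation by $\Pi_j^q$, so that the $\lambda$ appearing on the right-hand side genuinely becomes $\pi_j(\lambda)$.
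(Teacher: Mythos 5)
Your proposal is correct and follows essentially the same route as the paper: (\ref{1.4}) via the $q=1$ computation with $\Pi_i=z^{-\omega_i^{\vee}}(A^+_{w_i})^{-1}$ plus uniqueness of the $q$-deformation, (\ref{1.5}) from (\ref{Abraid}), and (\ref{1.6}) by writing $\mathcal{A}_{s_i}(\lambda)=(-1)^{h_i}\mathbb{S}_i\mathbb{B}_{s_i}(\lambda)$ and applying $\xi_j=T_{\pi_j}\circ\mathrm{Ad}(e^{x_j})$ factor by factor, tracking the scalar on $\mathbb{S}_i$ and the relabelling/shift $\lambda\mapsto\pi_j(\lambda)$ in $\mathbb{B}$. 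The only cosmetic difference is that in (\ref{1.4}) you verify the defining characterisation for the conjugate $(\Pi_j^q)^{-1}\Pi_i^q\Pi_j^q$ rather than for the two products, which is the same uniqueness argument.
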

\begin{proof}
{\it The proof of (\ref{1.4})} The proof that $\Pi_i^q$ and $\Pi_j^q$ commute is similar to the argument in the second half of Theorem 62 from \cite{EV}. 

As in Section \ref{operatorsPi}, let $\pi_i=t^{\omega_i^{\vee}}w_{i}^{-1}$, $\pi_j=t^{\omega_j^{\vee}}w_{j}^{-1} \in \Pi$ with $w_i,w_j\in W$. As $\Pi\cong P^{\vee}/Q^{\vee}$ is commutative, 
$$ t^{\omega_i^{\vee}+w_{i}^{-1}(\omega_j^{\vee})}w_{i}^{-1}w_{j}^{-1} =t^{\omega_i^{\vee}}w_{i}^{-1}t^{\omega_j^{\vee}}w_{j}^{-1}=\pi_i\pi_j=\pi_j\pi_i=t^{\omega_j^{\vee}+w_{j}^{-1}(\omega_i^{\vee})}w_{j}^{-1}w_{i}^{-1} \in P^{\vee}\ltimes W\cong W^{ext},$$
so $$\omega_i^{\vee}+w_{i}^{-1}(\omega_j^{\vee})=\omega_j^{\vee}+w_{j}^{-1}(\omega_i^{\vee}) \in P^{\vee}$$ $$w_{i}^{-1}w_{j}^{-1} =w_{j}^{-1}w_{i}^{-1} \in W.$$
At $q=1$, $\Pi_i^q=z^{-\omega_i^{\vee}}(A_{w_i}^+)^{-1}$, so 
$$\Pi_i\Pi_j=z^{-\omega_i^{\vee}}(A_{w_i}^+)^{-1}z^{-\omega_j^{\vee}}(A_{w_j}^+)^{-1}=z^{-(\omega_i^{\vee}+w_{i}^{-1}(\omega_j^{\vee}))}(A_{w_jw_i}^+)^{-1}=$$
$$=z^{-(\omega_j^{\vee}+w_{j}^{-1}(\omega_i^{\vee}))}(A_{w_iw_j}^+)^{-1}=\Pi_j\Pi_i.$$
So, $\Pi_i^q$ and $\Pi_j^q$ commute at $q=1$. This implies that the operators $\xi_i=\Ad(\Pi_i)$ from (\ref{xi}) commute at $q=1$. At $q\ne 1$, $\xi_i, \xi_j$ they are given by the same formulas, so they commute as well. This means that $(\Pi_j^q)^{-1}(\Pi_i^q)^{-1}\Pi_j^q\Pi_i^q$ commutes with every generator of the algebra, and in particular acts by some constant $c'_{ij}(q)\in \mathbb{C}$ depending on $q$ on every irreducible representation. Taking determinants of both sides of the equation $(\Pi_j^q)^{-1}(\Pi_i^q)^{-1}\Pi_j^q\Pi_i^q=c'_{ij}(q)$, using that $c'_{ij}(1)=1$, and using that by definition (\ref{conjugbypi}) the determinat of $Pi_i^q|_{z=1}$ doesn't depend on $q$, we get that $c'_{ij}(q)=1$ and (\ref{1.4}) follows.

{\it The proof of (\ref{1.5})} This is (\ref{Abraid}), and was shown in \cite{EV}.

{\it The proof of (\ref{1.6})} Let us show that $\Pi_j^q \mathcal{A}_{s_i}(\lambda) (\Pi_j^q)^{-1}=\xi_j \left( \mathcal{A}_{s_i}(\lambda) \right)=  c_{ji}^{\mathcal{H}_i}\, \mathcal{A}_{\pi_j(s_i)}(\pi_j(\lambda)),$
where $x_j\in \h$ are given by (\ref{xi}). First, write $\mathcal{A}_{s_i}(\lambda)=(-1)^{\mathcal{H}_i} \, \mathbb{S}_i \, \mathbb{B}_{s_i}(\lambda)$ as in (\ref{Asi}). By (\ref{xi}) and (\ref{conjugbypi}), conjugation by $(\Pi_j^q)$ is given by $\xi_j=T_{\pi_j} \circ \mathrm{Ad}(e^{x_j})$. Now compute, using the fact that $\mathbb{S}_i$ maps the weight space $V[\nu]$ to $V[s_i \nu]$ and $\Pi_j^{q}$ maps $V[\nu]$ to $V[\pi_j \nu]$:
\begin{eqnarray*} \xi_j((-1)^{\mathcal{H}_i})&=&(-1)^{\xi_j(\mathcal{H}_i)}=(-1)^{\mathcal{H}_{\pi_j(i)}}\\ 
\xi_j(\mathbb{S}_i)&=&T_{\pi_j}( e^{x_j} \mathbb{S}_{i}e^{-x_j})=(e^{x_j-s_i^{-1}x_j})T_{\pi_j}(\mathbb{S}_{i})=
e^{\alpha_i(x_j)\mathcal{H}_j}\mathbb{S}_{\pi_j(i)}=c_{ji}^{\mathcal{H}_j}\mathbb{S}_{\pi_j(i)}\\
\xi_j(\mathbb{B}_{s_i}(\lambda))&=& \xi_j( p((-\lambda-\rho+\frac{1}{2}h)(\mathcal{H}_i), \mathcal{H}_i, \mathcal{E}_i,\mathcal{F}_i) \\
&=& p((-\lambda-\rho+\frac{1}{2}\pi_j^{-1} (h))(\mathcal{H}_i), \xi_j(\mathcal{H}_i), \xi_j(\mathcal{E}_i),\xi_j(\mathcal{F}_i)) \\
&=&  p((-\pi_j(\lambda)-\rho+\frac{1}{2}h)(\mathcal{H}_{\pi_j(i)} ), \mathcal{H}_{\pi_j(i)}, \mathcal{E}_{\pi_j(i)},\mathcal{F}_{\pi_j(i)}) \\
&=& \mathbb{B}_{\pi_j(s_i)}(\pi_j(\lambda)).
\end{eqnarray*}
Putting those together, we get (\ref{1.6}).  Notice that $c_{ji}^{\mathcal{H}_j}$ is a scalar, taking value $c_{ji}^{\mathcal{\nu(H}_j)}$ on $V[\nu]$, and not depending on $\lambda, q$ or $\mathbf{z}$. 
\end{proof}



\subsection{Decomposing the operator $\mathcal{A}^\mu$ associated to a dominant $\mu \in Q^{\vee}$}
Let $\mu \in Q^{\vee}$, and assume $\mu$ is in the dominant chamber. We shall first prove Theorem \ref{main} for dominant coroots.
 We shall present $\mu$ in three different ways, and use the interplay between presentations (\ref{presentation1}), (\ref{presentation2}) and (\ref{presentation3}) to calculate the degeneration of $\mathcal{A}^{\mu}(\mathbf{z},\lambda)$.

Firstly, as it is in particular a coweight in $P^{\vee}$, it can be written as
\begin{align}\label{presentation1}
\mu=\sum_{i}m_i\omega_i^{\vee} \in P^{\vee}, & \,\,\,\,\, m_i=\left<\mu, \alpha_i \right>.
\end{align}
Dominant means that $m_i\ge 0$. In section \ref{statement} we used this way of expressing $\mu$ to define the operator $\mathcal{A}^{\mu}(\mathbf{z},\lambda)$ of shift by $\mu$ in terms of the operators $\mathcal{A}^{\omega_i^{\vee}}(\mathbf{z}, \lambda)=\mathcal{A}_i(\mathbf{z}, \lambda)$.
 
Secondly,  $\mu \in Q^{\vee}=\sum_i \mathbb{Z}\alpha_i^{\vee}$ can be written as 
\begin{align}\label{presentation2}
\mu=\sum_{i}k_i\alpha_i^{\vee} \in Q^{\vee}, & \,\,\,\,\, k_i=d_i \left<\mu, \omega_i^{\vee} \right>.
\end{align}

Thirdly, $\mu$ can be seen as an element of $BW^{aff}$, and thus expressed as a product of $s_i, s_i^{-1}$ for $i=1\ldots n$. As it is dominant, it can be written using only $s_i$. Let us thus choose one such reduced expression and write
\begin{align}\label{presentation3}
\mu=s_{i_1}\ldots s_{i_l}\in BW^{aff}\subseteq \Pi \ltimes BW^{aff}\cong BW^{ext} \end{align}
The corresponding dynamical Weyl group operator is $\mathcal{A}_{\mu}(\lambda)=\mathcal{A}_{s_{i_1}\ldots s_{i_l}}(\lambda)$.

As a corollary of Proposition \ref{BraidRelations} we get
\begin{corollary}
$$\mathcal{A}^{\mu}(\mathbf{z},\lambda)=C'\cdot \mathcal{A}_{t^\mu}(\lambda)=C\cdot \mathbb{S}_{i_1}\ldots \mathbb{S}_{i_l} \cdot \mathbb{B}_{s_{i_1}\ldots s_{i_l}}(\lambda),$$
where $C',C$ are scalar functions, acting as constants 
on every weight space, and not depending on  $\lambda, q$ or $\mathbf{z}$. \label{WriteProduct}
\end{corollary}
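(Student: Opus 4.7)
The second equality is immediate from formula (\ref{Aw}) applied to $w = t^\mu$ with the given reduced decomposition,
$$\mathcal{A}_{t^\mu}(\lambda) = (-1)^{\rho^\vee - t^\mu\rho^\vee}\,\mathbb{S}_{i_1}\cdots\mathbb{S}_{i_l}\,\mathbb{B}_{s_{i_1}\cdots s_{i_l}}(\lambda),$$
where the sign is a weight-space scalar independent of $\lambda, q, \mathbf{z}$, so $C = (-1)^{\rho^\vee - t^\mu\rho^\vee}C'$ is absorbed as required. The substance of the corollary is therefore the first equality $\mathcal{A}^\mu(\mathbf{z},\lambda) = C'\,\mathcal{A}_{t^\mu}(\lambda)$.

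The plan is to treat both sides as values of the almost-representation of $BW^{ext}$ supplied by Proposition \ref{BraidRelations}, evaluated at the element $t^\mu$ via two different decompositions: on the right, the $BW^{aff}$ reduced expression $s_{i_1}\cdots s_{i_l}$ from (\ref{presentation3}) composed via (\ref{Abraid}); on the left, the $\Pi\ltimes BW^{aff}$ decomposition $t^\mu = \prod_j (\pi_j w_j)^{m_j}$ coming from (\ref{presentation1}), realized through $\mathcal{A}_j = \Pi_j^q \mathcal{A}_{w_j}$ with the $\lambda$-shifts dictated by the dynamical action (\ref{dynamicalaction2}) that exactly match those in the definition of $\mathcal{A}^\mu$. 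Since both decompositions represent the same $t^\mu \in BW^{ext}$, Proposition \ref{BraidRelations} will force the two operators to differ by a weight-space scalar.

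To extract $C'$ explicitly, I would first unroll the inductive definition of $\mathcal{A}^\mu$, writing, for a chosen ordering $j_1,\ldots,j_K$ of the simple-coweight summands of $\mu = \sum m_j\omega_j^\vee$,
$$\mathcal{A}^\mu(\mathbf{z},\lambda) = \Pi_{j_K}^q \mathcal{A}_{w_{j_K}}(\lambda + \kappa\omega_{j_{K-1}}^\vee + \cdots + \kappa\omega_{j_1}^\vee) \cdots \Pi_{j_1}^q \mathcal{A}_{w_{j_1}}(\lambda),$$
decompose each $\mathcal{A}_{w_{j_k}}$ into a product of $\mathcal{A}_{s_i}$'s via (\ref{Abraid}), and then sweep every $\Pi_j^q$ leftward with (\ref{1.6}), accumulating scalar factors $c_{j,i}^{\mathcal{H}_i}$ and $\pi_j$-substituting the $\lambda$-arguments of the intervening $\mathcal{A}_{s_i}$'s. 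After reordering via (\ref{1.4}), the aggregated $\Pi^q$-prefix is $\prod_j (\Pi_j^q)^{m_j}$, whose implementing automorphism is, by (\ref{xi}), $T_{\prod_j \pi_j^{m_j}} \circ \mathrm{Ad}(e^{\sum_j m_j x_j})$; the hypothesis $\mu \in Q^\vee$ forces $\prod_j \pi_j^{m_j} = 1 \in \Pi \cong P^\vee/Q^\vee$, so this automorphism reduces to $\mathrm{Ad}(e^y)$ with $y = \sum_j m_j x_j \in \h$. Hence $\prod_j (\Pi_j^q)^{m_j}$ equals $e^y$ up to a scalar, and since $\h$ acts by scalars on weight spaces, the whole prefix contributes only a weight-space scalar. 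The leftover product of $\mathcal{A}_{s_i}$'s then collapses via (\ref{Abraid}) to $\mathcal{A}_{t^\mu}(\lambda)$, because the underlying $W^{aff}$-word — obtained from the original one by conjugating $s_i \mapsto s_{\pi_j(i)}$ and discarding the trivial residual product of $\pi_j$'s — still represents $t^\mu$.

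The main obstacle will be the $\lambda$-shift bookkeeping: the partial-sum shifts $\lambda \mapsto \lambda + \kappa\sum_{k'<k}\omega_{j_{k'}}^\vee$ from unrolling $\mathcal{A}^\mu$, once modified by the $\pi_j$-substitutions introduced by (\ref{1.6}), must be reconciled with the prefix shifts produced by applying (\ref{Abraid}) to $s_{i_1}\cdots s_{i_l}$. Both sequences of shifts, however, compose to the same net shift $\lambda \mapsto t^\mu\lambda = \lambda + \kappa\mu$ predicted by (\ref{LatticeAction}), so the two realizations of $t^\mu \in BW^{ext}$ agree up to a scalar $C'$ built out of the $c_{j,i}^{\mathcal{H}_i}$'s and the scalar coming from $\prod_j (\Pi_j^q)^{m_j}$, which by construction is independent of $\lambda$, $q$, and $\mathbf{z}$.
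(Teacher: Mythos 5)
Your proposal is correct and follows the paper's own route: the paper likewise disposes of the second equality by formula (\ref{Aw}) and obtains the first by invoking Proposition \ref{BraidRelations}, since $\mathcal{A}^{\mu}$ and $\mathcal{A}_{t^{\mu}}$ arise from two decompositions of the same element $t^{\mu}\in BW^{ext}$ (with $\mu\in Q^{\vee}$ making the $\Pi$-part trivial). Your sweeping of the $\Pi_j^q$'s via (\ref{1.6}) and the shift bookkeeping is simply an explicit unpacking of what the paper leaves implicit in the phrase ``follows from Proposition \ref{BraidRelations}.''
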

\begin{proof}
The first equality follows from Proposition \ref{BraidRelations}.
To get the second one, use formula (\ref{Aw}), which states that $\mathcal{A}_{t^\mu}(\lambda)=(-1)^{\rho^{\vee}-s_{i_1}\ldots s_{i_l}\rho^{\vee}}\mathbb{S}_{i_1}\ldots \mathbb{S}_{i_l}\mathbb{B}_{s_{i_1}\ldots s_{i_l}}(\lambda)$. 
\end{proof}

Note that, in writing this decomposition, we used the fact that $s_{i_1}\ldots s_{i_l}\in BW^{aff}$ is expressed using only $s_i$ and not $s_i^{-1}$; these are the only elements of $BW^{aff}$ for which the decomposition of $\mathcal{A}_{s_i}(\lambda)=(-1)^{h_i}\mathbb{S}_i\mathbb{B}_{s_i}(\lambda)$ was defined. This is the reason for assuming $\mu$ is dominant. 

The aim is to calculate $\Phi(\mathcal{A}^{\mu}(\mathbf{z},\frac{\lambda}{\hbar})) \mod Y_{\hbar}\g_{\ge 2}$. We shall now do so for each of the factors of Corollary \ref{WriteProduct}.

\subsection{The degeneration of $\mathbb{B}_{s_{i_1}\ldots s_{i_l}}(\frac{\lambda}{\hbar})$ }\label{B}
In this subsection we shall calculate $\Phi(\mathbb{B}_{s_{i_1}\ldots s_{i_l}}(\frac{\lambda}{\hbar})) \mod Y_{\hbar}\g_{\ge 2}$.

Remember that $t^{\mu}=s_{i_1}\ldots s_{i_l}\in BW^{aff}$ is a reduced decomposition, and consider $l$ affine roots defined by $\tilde{\alpha}^{j}=s_{i_l}\ldots s_{i_{j+1}}(\alpha_{i_j})$, for $j=1,\ldots l$. The action of $s_i$ on the affine roots is given by (\ref{reflect}) and (\ref{reflect0}). Let $\tilde{\alpha}^{j}=(\alpha^{j},0,n^{j})$, and let $\mathcal{E}^j=T_{i_l}^{-1}\ldots T_{i_{j+1}}^{-1}(\mathcal{E}_{i_j})$, $\mathcal{F}^j=T_{i_l}^{-1}\ldots T_{i_{j+1}}^{-1}(\mathcal{F}_{i_j})$, $\mathcal{H}^j=T_{i_l}^{-1}\ldots T_{i_{j+1}}^{-1}(\mathcal{H}_{i_j})=s_{i_l}\ldots s_{i_{j+1}}(\mathcal{H}_{i_j})$, as in formula (\ref{Bw}) and the comments preceding it. Using (\ref{Bw}) and (\ref{pformula}), the operator $\mathbb{B}_{s_{i_1}\ldots s_{i_l}}(\frac{\lambda}{\hbar})$, when acting on $V[\nu]$, can be written as
\begin{align*}
\mathbb{B}_{s_{i_1}\ldots s_{i_l}}(\frac{\lambda}{\hbar})|_{V[\nu]}&=\prod_{j=1}^l p_{q_{i_{j}}}((-\frac{\lambda}{\hbar}+\frac{1}{2}\nu)(\mathcal{H}^j)-1,\mathcal{H}^j, \mathcal{E}^j, \mathcal{F}^j) \\
&=\prod_{j=1}^l \left( \sum_{k=0}^{\infty} \frac{1}{[k]_{q_{i_j}}!}(\mathcal{F}^j)^k (\mathcal{E}^j)^k\prod_{i=1}^k\frac{q_{i_j}^{(\frac{1}{\hbar} \lambda(\mathcal{H}^j)-\frac{1}{2}\nu(\mathcal{H}^j)-1)}}{[-\frac{1}{\hbar} \lambda(\mathcal{H}^j)-\frac{1}{2}\nu(\mathcal{H}^j)-i]_{q_{i_j}}} \right).
\end{align*}
We shall now calculate $\Phi$ of this, up to elements of $Y_{\hbar}\g$ of degree $\ge2$.

\begin{lemma}
$$\frac{q_{i_j}^{(\frac{1}{\hbar} \lambda(\mathcal{H}^j)-\frac{1}{2}\nu(\mathcal{H}^j)-1)}}{[-\frac{1}{\hbar} \lambda(\mathcal{H}^j)-\frac{1}{2}\nu(\mathcal{H}^j)-i]_{q_{i_j}}} =\hbar d_{i_j}  \frac{1}{e^{-\left< \lambda, \alpha^j \right>}-1} \,\,\,\,  \mod \hbar ^2.$$
\end{lemma}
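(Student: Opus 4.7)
The plan is a direct asymptotic expansion in $\hbar$, with the only non-trivial input being the translation from $\lambda(\mathcal{H}^j)$ to the pairing $\langle\lambda,\alpha^j\rangle$.

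First I would identify the exponents. Since $\mathcal{H}^j = s_{i_l}\cdots s_{i_{j+1}}(\mathcal{H}_{i_j}) = h_{\alpha^j}$ corresponds under $\g\cong\g^*$ to the coroot $(\alpha^j)^\vee$, and since $\alpha^j$ is $W$-conjugate to $\alpha_{i_j}$ so that $d_{\alpha^j}=d_{i_j}$, we have
\begin{equation*}
\lambda(\mathcal{H}^j)\;=\;\langle\lambda,(\alpha^j)^\vee\rangle\;=\;\frac{\langle\lambda,\alpha^j\rangle}{d_{i_j}}.
\end{equation*}
Combined with $q_{i_j}=q^{d_{i_j}}=e^{\hbar d_{i_j}/2}$, this gives the crucial identity
\begin{equation*}
q_{i_j}^{\lambda(\mathcal{H}^j)/\hbar}\;=\;e^{\langle\lambda,\alpha^j\rangle/2},
\end{equation*}
which is an $\hbar$-independent quantity, while $q_{i_j}^{\nu(\mathcal{H}^j)/2}=1+O(\hbar)$ and $q_{i_j}^i = 1+O(\hbar)$ since $\nu,i$ do not depend on $\hbar$.

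Next I would expand the numerator and denominator separately. Writing $A=\lambda(\mathcal{H}^j)/\hbar$ and $B=\nu(\mathcal{H}^j)/2$ for brevity, the numerator is $q_{i_j}^{A-B-1} = e^{\langle\lambda,\alpha^j\rangle/2}\bigl(1+O(\hbar)\bigr)$. For the denominator,
\begin{equation*}
[-A-B-i]_{q_{i_j}}\;=\;\frac{q_{i_j}^{-A-B-i}-q_{i_j}^{A+B+i}}{q_{i_j}-q_{i_j}^{-1}}
\;=\;\frac{e^{-\langle\lambda,\alpha^j\rangle/2}-e^{\langle\lambda,\alpha^j\rangle/2}}{\hbar d_{i_j}}\;+\;O(1),
\end{equation*}
using $q_{i_j}-q_{i_j}^{-1}=2\sinh(\hbar d_{i_j}/2)=\hbar d_{i_j}+O(\hbar^3)$ for the bottom and the observation above for the top.

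Taking the quotient and multiplying numerator and denominator by $e^{-\langle\lambda,\alpha^j\rangle/2}$ yields
\begin{equation*}
\frac{q_{i_j}^{A-B-1}}{[-A-B-i]_{q_{i_j}}}
\;=\;\hbar d_{i_j}\,\frac{e^{\langle\lambda,\alpha^j\rangle/2}}{e^{-\langle\lambda,\alpha^j\rangle/2}-e^{\langle\lambda,\alpha^j\rangle/2}}+O(\hbar^2)
\;=\;\hbar d_{i_j}\,\frac{1}{e^{-\langle\lambda,\alpha^j\rangle}-1}+O(\hbar^2),
\end{equation*}
which is the claimed formula. The main (minor) obstacle is bookkeeping: one must verify that the $B$- and $i$-dependent contributions enter only at $O(\hbar^2)$ in the final ratio, which is automatic because the leading behaviour of the denominator is already of order $\hbar^{-1}$ and the only factors that are not $1+O(\hbar)$ cancel cleanly between numerator and denominator.
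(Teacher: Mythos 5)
Your proposal is correct and follows essentially the same route as the paper: a direct $\hbar$-expansion of the quantum integer after observing that $q_{i_j}^{\lambda(\mathcal{H}^j)/\hbar}=e^{\langle\lambda,\alpha^j\rangle/2}$ is $\hbar$-independent (equivalently $d_{i_j}\lambda(\mathcal{H}^j)=\langle\lambda,\alpha^j\rangle$), with $q_{i_j}-q_{i_j}^{-1}=\hbar d_{i_j}+O(\hbar^2)$ and all $\nu$- and $i$-dependent factors contributing only at higher order. You merely make the identification $\lambda(\mathcal{H}^j)=\langle\lambda,\alpha^j\rangle/d_{i_j}$ explicit up front, where the paper folds it into the middle of the computation.
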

\begin{proof}
Using that $q_{i_j}=q^{d_{i_j}}=e^{d_{i_j} \hbar/2}=1+\frac{\hbar}{2}d_{i_j} \mod \hbar^2$,
\begin{align*}
\frac{q_{i_j}^{(\frac{1}{\hbar} \lambda(\mathcal{H}^j)-\frac{1}{2}\nu(\mathcal{H}^j)-1)}}{[-\frac{1}{\hbar} \lambda(\mathcal{H}^j)-\frac{1}{2}\nu(\mathcal{H}^j)-i]_{q_{i_j}}} &=\frac{\exp(\frac{d_{i_j}}{2}\lambda(\mathcal{H}^j))\,\,  q_{i_j}^{-\frac{1}{2}\nu(\mathcal{H}^j)-1}\,\, (q_{i_j}-q_{i_j}^{-1})}{\exp(-\frac{d_{i_j}}{2}\lambda(\mathcal{H}^j))\,\, q_{i_j}^{-\frac{1}{2}\nu(\mathcal{H}^j)-i}-\exp(\frac{d_{i_j}}{2}\lambda(\mathcal{H}^j))\,\, q_{i_j}^{\frac{1}{2}\nu(\mathcal{H}^j)+i}} \\
&=\frac{ q_{i_j}^{-\frac{1}{2}\nu(\mathcal{H}^j)-1}\,\, (q_{i_j}-q_{i_j}^{-1})}{\exp(-\lambda(d_{i_j} \mathcal{H}^j))\,\, q_{i_j}^{-\frac{1}{2}\nu(\mathcal{H}^j)-i}- q_{i_j}^{\frac{1}{2}\nu(\mathcal{H}^j)+i}} \\
&=\frac{\left(  1-\frac{\hbar}{2}d_{i_j}(\frac{1}{2}\nu(\mathcal{H}^j)+1) \right)\cdot \left( \hbar d_{i_j}\right) }{\exp(-\left< \lambda, \alpha^j\right>) \cdot \left(1-\frac{\hbar}{2}d_{i_j}(\frac{1}{2}\nu(\mathcal{H}^j)+i)\right)- \left(1+\frac{\hbar}{2}d_{i_j}(\frac{1}{2}\nu(\mathcal{H}^j)+i)\right)} \mod \hbar^2 \\
&=\frac{\hbar d_{i_j}}{e^{-\left< \lambda, \alpha^j\right>}-1} \mod \hbar^2 . \\
\end{align*}
\end{proof}

\begin{corollary}\label{13}
$$\Phi(p_{q_{i_j}}((-\frac{\lambda}{\hbar}+\frac{1}{2}\nu)(\mathcal{H}^j)-1,\mathcal{H}^j, \mathcal{E}^j, \mathcal{F}^j))=1+\hbar \Phi(\mathcal{F}^j \mathcal{E}^j)\cdot \frac{d_{i_j}}{e^{-\left< \lambda, \alpha^j\right>}-1} \mod Y_{\hbar}\g_{\ge 2}.$$
\end{corollary}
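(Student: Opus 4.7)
The plan is to substitute the explicit power series (\ref{pformula}) for $p_{q_{i_j}}$, apply the preceding lemma to each factor inside the $k$-th summand, and then control the tail using the Yangian grading. On the weight space $V[\nu]$ the operator $\mathcal{H}^j$ acts as the scalar $\nu(\mathcal{H}^j)$, so with $m=(-\lambda/\hbar+\nu/2)(\mathcal{H}^j)-1$ the $k$-th summand of $p_{q_{i_j}}((-\lambda/\hbar+\nu/2)(\mathcal{H}^j)-1,\mathcal{H}^j,\mathcal{E}^j,\mathcal{F}^j)$ can be regrouped as
$$\frac{(\mathcal{F}^j)^k(\mathcal{E}^j)^k}{[k]_{q_{i_j}}!}\prod_{i=1}^{k}\frac{q_{i_j}^{-(m+2)}}{[m-\nu(\mathcal{H}^j)-i+1]_{q_{i_j}}},$$
and the key observation is that each factor in this product is exactly the ratio appearing in the preceding lemma (with the running index $i$ matching the one in the lemma), so each equals $\hbar d_{i_j}/(e^{-\langle\lambda,\alpha^j\rangle}-1)$ modulo $\hbar^{2}$.

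For $k=0$ the summand is $1$. For $k=1$ there is a single factor from the lemma, so the summand is $\mathcal{F}^j\mathcal{E}^j\cdot \hbar d_{i_j}/(e^{-\langle\lambda,\alpha^j\rangle}-1)$ modulo $\hbar^{2}$. Applying $\Phi$, which is a $\CC[[\hbar]]$-algebra homomorphism, gives exactly the stated term $\hbar\,\Phi(\mathcal{F}^j\mathcal{E}^j)\cdot d_{i_j}/(e^{-\langle\lambda,\alpha^j\rangle}-1)$ up to an $\hbar^{2}$-correction, and that correction lies in $\widehat{Y_\hbar\g}_{\geq 2}$ because $\deg\hbar=1$.

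For $k\geq 2$ the product of $k$ ratios has leading $\hbar$-order $k$, so the scalar coefficient of $(\mathcal{F}^j)^k(\mathcal{E}^j)^k$ lies in $\hbar^{k}\CC[[\hbar]]$; the factor $1/[k]_{q_{i_j}}!$ is a unit of order $O(1)$ and does not spoil this. Since $\Phi$ is $\CC[[\hbar]]$-linear, the image of the $k$-th summand lies in $\hbar^{k}\widehat{Y_\hbar\g}\subseteq\widehat{Y_\hbar\g}_{\geq k}\subseteq\widehat{Y_\hbar\g}_{\geq 2}$, and is absorbed into the error. Summing over $k$ and combining the $k=0$ and $k=1$ terms yields the claim. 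The only delicate point, handled uniformly by the embedding $\hbar^{k}\widehat{Y_\hbar\g}\subseteq\widehat{Y_\hbar\g}_{\geq k}$, is converting powers of $\hbar$ into degrees in the filtration so that the formal $\hbar$-expansion truncates after the first two terms modulo $Y_\hbar\g_{\geq 2}$.
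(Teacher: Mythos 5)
Your proof is correct and follows exactly the route the paper intends: the paper states this corollary as an immediate consequence of restricting the series (\ref{pformula}) to $V[\nu]$ and applying the preceding lemma factorwise, which is precisely what you spell out, including the observation that the $k$-th summand contributes at order $\hbar^k$ and hence lands in $Y_\hbar\g_{\ge 2}$ for $k\ge 2$. No gaps; your handling of the $\CC[[\hbar]]$-linearity of $\Phi$ and the inclusion $\hbar^{k}\widehat{Y_\hbar\g}\subseteq\widehat{Y_\hbar\g}_{\ge k}$ is exactly the justification the paper leaves implicit.
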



\begin{lemma}\label{14}
For any $j=0,\ldots l$, $$\Phi(\mathcal{F}^j \mathcal{E}^j)=\left\{
 \begin{array}{ll} f_{\alpha^j}e_{\alpha^j},& \alpha_j> 0 \\ e_{-\alpha^j}f_{-\alpha^j},& \alpha_j< 0 \\  \end{array} \right. \mod Y_{\hbar}\g_{\ge 1}.$$
\end{lemma}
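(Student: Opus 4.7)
The plan is to reduce the statement to the classical limit. By Theorem \ref{ValSachiso}, the ideal $\widehat{Y_\hbar\g}_{\ge 1}$ pulls back under $\Phi$ to the ideal $\mathcal{J}$ cut out by the composition $U_qL\g\xrightarrow{q\to 1} U\g[z,z^{-1}]\xrightarrow{z\to 1} U\g$; equivalently, this may be read off directly from the explicit formulas for $\Phi$, since all $t_{i,r}, x^\pm_{i,r}$ with $r\ge 1$ as well as $\hbar$ itself lie in positive degree, while the scalar $(\hbar/(q_i-q_i^{-1}))^{1/2}$ is a unit at $\hbar=0$. Hence computing $\Phi(\mathcal{F}^j\mathcal{E}^j) \bmod Y_\hbar\g_{\ge 1}$ amounts to computing $\mathcal{F}^j\mathcal{E}^j$ in $U_qL\g$ at $q=1$ and then specializing $z\to 1$.

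For this I invoke the description of the affine quantum root vectors $\mathcal{E}^j=\mathcal{E}^\delta_{\tilde\alpha^j}$ and $\mathcal{F}^j=\mathcal{F}^\delta_{\tilde\alpha^j}$ at $q=1$ recorded in the ``Root vectors'' subsection: writing $\tilde\alpha^j=(\alpha^j,0,n^j)$, the element $\mathcal{E}^j|_{q=1}$ is a scalar multiple of the root vector in root space $\alpha^j$ multiplied by $z^{n^j}$, and $\mathcal{F}^j|_{q=1}$ is a scalar multiple of the root vector in root space $-\alpha^j$ multiplied by $z^{-n^j}$. Although the individual scalars depend on the reduced decomposition $\delta$, the product does not, because $T_i^{-1}$ at $q=1$ preserves the invariant form; this pins the product down to be exactly $e_{\alpha^j}f_{\alpha^j}$ (or $f_{\alpha^j}e_{\alpha^j}$) without any stray constant. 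The desired product $\mathcal{F}^j\mathcal{E}^j|_{q=1}$ is handled identically: for $\alpha^j>0$ one computes $(f_{\alpha^j}z^{-n^j})(e_{\alpha^j}z^{n^j})=f_{\alpha^j}e_{\alpha^j}$, the $z$-powers cancelling, and for $\alpha^j<0$ the analogous calculation yields $e_{-\alpha^j}f_{-\alpha^j}$. Both answers already sit in $U\g\subset U\g[z,z^{-1}]$, so the subsequent $z\to 1$ leaves them unchanged.

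The main (and essentially only) subtlety is the normalization of the $\delta$-dependent root vectors. Were that not already settled in the earlier subsection via the invariance of $\left<\cdot,\cdot\right>$ under $T_i^{-1}$ at $q=1$, one would have to track an unknown scalar through a possibly long reduced word, which would be unpleasant; with that observation in hand, however, the lemma reduces to the one-line bookkeeping above.
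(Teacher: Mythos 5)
Your proposal is correct and follows essentially the same route as the paper: reduce modulo $Y_{\hbar}\g_{\ge 1}$ to the classical limit of $\Phi$ (i.e.\ $q\to 1$ followed by $z\to 1$), identify $\mathcal{E}^j,\mathcal{F}^j$ at $q=1$ with root vectors times powers of $z$ up to scalars, and fix the normalization of the product via the invariance of $\left<\cdot,\cdot\right>$ under $T_i^{-1}$ at $q=1$, so that the $z$-powers and the $\delta$-dependent constants cancel. The only organizational difference is that the paper verifies the simple-root base cases (including $i=0$, where the identification of Section \ref{LatticeElts} and the sign $o(i)$ enter) directly from the explicit formula for $\Phi$ and then conjugates by the $T_{i}^{-1}$, whereas you cite the classical description already recorded in the ``Root vectors'' subsection; the substance is the same.
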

\begin{proof}
First assume $i\in \{1,\ldots n \}$, so that $\mathcal{E}_i=E_{i,0}$ and $\mathcal{F}_i=F_{i,0}$ are generators corresponding to the affine root $(\alpha_i,0,0)$ under the isomorphism of two presentations of the quantum affine algebras fixed in \ref{LatticeElts}. Looking at the definition of $\Phi$ after theorem \ref{ValSachiso}, we see that $g_i(v)=d_{i}^{-1/2} \mod \hbar$. Thus, 
\begin{align*}
\Phi(\mathcal{F}_{i}\mathcal{E}_{i}) = \Phi(F_{i,0} E_{i,0}) =(d_{i}^{-1/2} X_{i,0}^-) (d_{i}^{-1/2} X_{i,0}^+)=d_{i}^{-1}x_{i}^-x_{i}^+=f_ie_i 
\mod Y_{\hbar}\g_{\ge 1}.
\end{align*}

For $i=0$ and the affine root $(-\theta, 0, 1)$, by \ref{LatticeElts} we have $\mathcal{E}_0=-o(i) f_{\theta}z \mod \hbar$ and $\mathcal{F}_0=-o(i) f_{\theta}z^{-1} \mod \hbar$. Thus, 
\begin{align*}
\Phi(\mathcal{F}_0 \mathcal{E}_0)&= \Phi((e_{\theta}z)(f_{\theta}z^{-1})) \quad \mod \hbar \\
&= e_{\theta} f_{\theta}  \quad \mod Y_{\hbar}\g_{\ge 1}.
\end{align*}

So, the claim holds if $\tilde{\alpha}^j$ is a simple affine root. In general, $\tilde{\alpha}^j=s_{i_l}\ldots s_{i_{j+1}}(\alpha_{i_j})$, $\mathcal{E}^j=T_{i_l}^{-1}\ldots T_{i_{j+1}}^{-1}(\mathcal{E}_{i_j})$ and $\mathcal{F}^j=T_{i_l}^{-1}\ldots T_{i_{j+1}}^{-1}(\mathcal{F}_{i_j})$, with $i_j\in \{0,1,\ldots n \}$. As $T_i^{-1}$ maps any root space $\g[\alpha]$ to $\g[s_{i}(\alpha)]$, and $\Phi(\mathcal{F}_{i_j} \mathcal{E}_{i_j})\in \g[-\alpha_{i_j}]\g[\alpha_{i_j}] \subseteq Y_{\hbar}\g$, we can conclude that $$\Phi(\mathcal{F}^j\mathcal{E}^j)=T_{i_l}^{-1}\ldots T_{i_{j+1}}^{-1}(\Phi(\mathcal{F}_{i_j} \mathcal{E}_{i_j})) \in \g[-\alpha^j] \g[\alpha^j] \subseteq Y_{\hbar}\g.$$
The space $\g[-\alpha^j] \g[\alpha^j] $ is one dimensional, and spanned by either $ f_{\alpha^j}e_{\alpha^j}$ (if $\alpha_j> 0$) or $e_{-\alpha^j}f_{-\alpha^j}$, if $\alpha_j< 0$. The operators $T_i^{-1}$ also preserve the invariant inner product, under which $\left<f_{\alpha^j},e_{\alpha^j} \right>=\left<f_{\alpha_{i_j}},e_{\alpha_{i_j}} \right>=d_{\alpha^j}^{-1}$. This fixes the normalization and implies the claim. 
\end{proof}

\begin{lemma}\label{15}
As multisets, 
$$\{\alpha^j \, | \,  j=1,\ldots l \} = \{ \left<\alpha, \mu \right>  \alpha \,  | \,  \alpha \in \mathbf{R}_{+} \}. $$
\end{lemma}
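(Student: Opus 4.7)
The plan is to identify the multiset $\{\alpha^j\}_{j=1}^l$ with the finite parts of the inversion set of the element $t^\mu \in W^{aff}$, and then compute that inversion set directly. First, I would invoke the standard Coxeter-group fact: for any reduced expression $w = s_{i_1}\cdots s_{i_l}$, the roots $\tilde\alpha^j := s_{i_l}\cdots s_{i_{j+1}}(\alpha_{i_j})$ for $j=1,\ldots, l$ are $l$ distinct positive roots, and together they form $\mathrm{Inv}(w) = \{\tilde\beta > 0 \,:\, w(\tilde\beta) < 0\}$. This is a short induction on $l$, using that $s_{i_l}$ permutes the positive roots other than $\alpha_{i_l}$. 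Since $\mu \in Q^\vee$ is dominant, $t^\mu$ admits a reduced expression in $W^{aff}$ using only the generators $s_i$ (no inverses), so the fact applies to the decomposition fixed in Section \ref{B}.

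Next, I would compute $\mathrm{Inv}(t^\mu)$ explicitly. At level $k=0$, formula (\ref{LatticeAction}) reads
$$t^\mu(\beta,0,n) = (\beta,0, n-\langle\beta,\mu\rangle).$$
A positive affine root has the form $(\beta,0,n)$ with either $n\geq 1$ (and $\beta\in\mathbf{R}\cup\{0\}$) or $n=0$ with $\beta\in\mathbf{R}_+$; its image is negative iff $n-\langle\beta,\mu\rangle < 0$, or else $n-\langle\beta,\mu\rangle = 0$ with $\beta\in\mathbf{R}_-$. For $\beta\in\mathbf{R}_-\cup\{0\}$, dominance gives $\langle\beta,\mu\rangle\leq 0$, so $n-\langle\beta,\mu\rangle\geq n\geq 1$ and no inversion occurs. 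For $\beta\in\mathbf{R}_+$, the inversions are precisely those with $0\leq n\leq \langle\beta,\mu\rangle-1$, contributing $\langle\beta,\mu\rangle$ affine roots each sent to the negative root $(\beta,0,n-\langle\beta,\mu\rangle)$. Projecting to finite parts yields each $\beta\in\mathbf{R}_+$ with multiplicity $\langle\beta,\mu\rangle$, which is exactly the multiset on the right-hand side of the lemma.

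The main obstacle, such as it is, is sign-and-direction bookkeeping: one must verify that the "right-to-left" reading $\tilde\alpha^j = s_{i_l}\cdots s_{i_{j+1}}(\alpha_{i_j})$ produces $\mathrm{Inv}(t^\mu)$, and not $\mathrm{Inv}(t^{-\mu})$, so that the finite parts come out positive rather than negative. Once this is pinned down, the statement follows from the explicit action of translations on affine roots together with the standard inversion-set identity for reduced words.
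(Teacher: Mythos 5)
Your proposal is correct and follows essentially the same route as the paper: identify $\{\alpha^j\}$ with the finite parts of the inversion set of $t^\mu$ (the standard reduced-word fact, cited to Humphreys in the paper), then compute that inversion set from the explicit translation action $t^\mu(\beta,0,n)=(\beta,0,n-\langle\beta,\mu\rangle)$, using dominance of $\mu$ to rule out inversions with $\beta\in\mathbf{R}_-\cup\{0\}$. The sign/direction bookkeeping you flag does check out for the paper's convention $\tilde\alpha^j=s_{i_l}\cdots s_{i_{j+1}}(\alpha_{i_j})$, since then $t^\mu(\tilde\alpha^j)=-s_{i_1}\cdots s_{i_{j-1}}(\alpha_{i_j})<0$.
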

\begin{proof}
As $t^{\mu}=s_{i_1}\ldots s_{i_l}$ is a reduced presentation, 
\begin{align*}
\{\tilde{\alpha}^j \, | \,  j=1,\ldots l \} &= \{\tilde{\alpha}=(\alpha,0,n) \textrm{ affine root }  | \tilde{\alpha}>0, t^{\mu}(\tilde{\alpha})<0 \} \qquad \textrm{(see \cite{H})}  \\
&= \{(\alpha,0,n)\textrm{ affine root } | (\alpha,0,n)>0, \, (\alpha,0,n-\left<\alpha, \mu \right>) <0 \} \qquad \textrm{(see (\ref{LatticeAction}))}.  \\
\end{align*}

Positive affine roots are $(\alpha,0,n)$ for which $\alpha$ is a root or $0$ and $n\in \mathbb{Z}$ satisfies $n>0$, and the roots $(\alpha,0,0)$ for which $\alpha$ is a positive root. As $\mu$ is dominant, $\left<\alpha, \mu \right>\ge 0$ for any positive root $\alpha$. For $(\alpha,0,n)$ a positive root, there are three cases:
\begin{itemize}
\item If $\alpha<0$, then $n>0$, and $n-\left<\alpha, \mu \right>\ge n>0$, so $t^{\mu}(\tilde{\alpha})>0$.
\item If $\alpha=0$, then $t^{\mu}(\tilde{\alpha})=\tilde{\alpha}>0$.
\item If $\alpha>0$, then $n-\left<\alpha, \mu \right> < 0$ whenever $n<\left<\alpha, \mu \right>$. 
\end{itemize}
So, the set of positive affine roots $\tilde{\alpha}=(\alpha,0,n)$ such that $t^{\mu}(\tilde{\alpha})$ is a negative affine root is
$$\{(\alpha,0,n)| \alpha \in \mathbf{R}_{+}, n=0, \ldots , \left<\alpha, \mu \right>-1  \},$$
and the corresponding multiset of non-affine roots is
$$\{  \left<\alpha, \mu \right>   \alpha | \alpha \in \mathbf{R}_{+} \}.$$
\end{proof}

Putting it all together, we get
\begin{lemma}\label{16}
$$\Phi(\mathbb{B}_w(\frac{\lambda}{\hbar}))=1+\hbar \sum_{\alpha \in \mathbf{R}_+} \frac{\left<\alpha,\mu\right>}{e^{-\left< \lambda, \alpha \right>}-1} x_{\alpha}^-x_{\alpha}^+  \mod Y_{\hbar}\g_{\ge 2}. $$

\end{lemma}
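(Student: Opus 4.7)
The plan is to assemble Corollary~\ref{13}, Lemma~\ref{14}, and Lemma~\ref{15} with only a small normalization identity added on top. First I would expand the product
$\mathbb{B}_{s_{i_1}\cdots s_{i_l}}(\lambda/\hbar) = \prod_{j=1}^{l} p_{q_{i_j}}(\cdots)$ factor by factor. Corollary~\ref{13} writes each factor as $1 + \hbar B_j$ modulo $Y_{\hbar}\g_{\ge 2}$, where $B_j = \frac{d_{i_j}}{e^{-\langle \lambda,\alpha^j\rangle}-1}\,\Phi(\mathcal{F}^j\mathcal{E}^j)$. When the $l$ factors are multiplied out, every cross term has the form $\hbar^k \prod B_{j_i}$ with $k\ge 2$, which lies in $Y_{\hbar}\g_{\ge 2}$ since $\deg\hbar=1$. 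Hence the product collapses to $1 + \hbar\sum_{j=1}^{l} B_j$ modulo $Y_{\hbar}\g_{\ge 2}$.

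Next, since each $B_j$ is multiplied by $\hbar$, only the image of $\Phi(\mathcal{F}^j\mathcal{E}^j)$ modulo $Y_{\hbar}\g_{\ge 1}$ is relevant. Lemma~\ref{15} says that, as $\mu$ is dominant, every $\alpha^j$ is a positive root of $\g$, so Lemma~\ref{14} applies in its first case and yields $\Phi(\mathcal{F}^j\mathcal{E}^j) \equiv f_{\alpha^j}e_{\alpha^j}$ mod $Y_{\hbar}\g_{\ge 1}$. I would then translate between the $(e_\alpha,f_\alpha)$- and $(x_\alpha^+,x_\alpha^-)$-normalizations: since $\langle e_\alpha,f_\alpha\rangle = d_\alpha^{-1}$ while $\langle x_\alpha^+,x_\alpha^-\rangle = 1$, we have $f_{\alpha^j}e_{\alpha^j} = d_{\alpha^j}^{-1}\,x_{\alpha^j}^{-}x_{\alpha^j}^{+}$.

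The identity that does the real work is $d_{\alpha^j}=d_{i_j}$: the affine form $\langle\cdot,\cdot\rangle$ is $W^{aff}$-invariant, and for each simple index $i_j\in\{0,1,\dots,n\}$ one has $d_{\tilde\alpha_{i_j}}=d_{i_j}$ (including $i_j=0$, since $\langle -\theta,-\theta\rangle = 2d_0$), hence $d_{\alpha^j} = d_{\tilde\alpha^j} = d_{\tilde\alpha_{i_j}} = d_{i_j}$. This exactly cancels the $d_{i_j}$ appearing in $B_j$, leaving $\hbar B_j \equiv \hbar\,\frac{1}{e^{-\langle\lambda,\alpha^j\rangle}-1}\,x_{\alpha^j}^{-}x_{\alpha^j}^{+}$ mod $Y_{\hbar}\g_{\ge 2}$.

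Finally I would re-index the sum using Lemma~\ref{15}: the multiset $\{\alpha^j : j = 1,\dots,l\}$ equals the multiset in which each positive root $\alpha\in\mathbf{R}_+$ occurs with multiplicity $\langle\alpha,\mu\rangle$, so $\sum_j$ becomes $\sum_{\alpha\in\mathbf{R}_+}\langle\alpha,\mu\rangle(\cdots)_\alpha$ and the stated formula drops out. The only mildly nontrivial step in this plan is the normalization identity $d_{\alpha^j}=d_{i_j}$; everything else is bookkeeping that threads together the three previous results.
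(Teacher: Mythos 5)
Your proposal is correct and follows essentially the same route as the paper: expand the product, apply Corollary \ref{13} term by term, discard cross terms of $\hbar$-degree $\ge 2$, substitute via Lemma \ref{14}, and re-index with Lemma \ref{15}. The only addition is that you spell out the normalization step $d_{\alpha^j}=d_{i_j}$ (via $W^{aff}$-invariance of the form) converting $d_{i_j}\,f_{\alpha^j}e_{\alpha^j}$ into $x_{\alpha^j}^{-}x_{\alpha^j}^{+}$, which the paper leaves implicit in its final line.
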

\begin{proof}
\begin{align*}
\Phi(\mathbb{B}_w(\frac{\lambda}{\hbar}))&=\Phi(\prod_{j=1}^l p_{q_{j}}((-\frac{\lambda}{\hbar}+\frac{1}{2}\nu)(\mathcal{H}^j)-1,\mathcal{H}^j, \mathcal{E}^j, \mathcal{F}^j)) \\
&=\prod_{j=1}^l \left( 1+\hbar \Phi(\mathcal{F}^j \mathcal{E}^j)\cdot \frac{d_{i_j}}{e^{-\left< \lambda, \alpha^j\right>}-1}\right) \mod Y_{\hbar}\g_{\ge 2} \quad \textrm{(using Corollary \ref{13})} \\
&=1+\hbar \sum_{j=1}^l \frac{d_{i_j}}{e^{-\left< \lambda, \alpha^j\right>}-1} f_{\alpha^j}e_{\alpha^j}  \mod Y_{\hbar}\g_{\ge 2} \quad \textrm{(using Lemma \ref{14}}) \\
&=1+\hbar \sum_{\alpha \in \mathbf{R}_+} \frac{\left<\alpha,\mu\right>}{e^{-\left< \lambda, \alpha \right>}-1} x_{\alpha}^-x_{\alpha}^+  \mod Y_{\hbar}\g_{\ge 2} \quad \textrm{(using Lemma \ref{15})}.
\end{align*}

\end{proof}

\subsection{The degeneration of $\mathbb{S}_{i_1}\ldots \mathbb{S}_{i_l}$ }
In this subsection we prove the following proposition: 

\begin{proposition}\label{DegenerateS}
Let $\mu\in Q^{\vee}$ be arbitrary, and $t^{\mu}=s_{i_1}^{\epsilon_{i_i}}\ldots s_{i_l}^{\epsilon_{i_l}}$, $i_1,\ldots i_l\in \{0,\ldots n\}, \epsilon_i\in \{\pm1\}$ be a reduced decomposition of $t^{\mu}$ in $BW^{aff}$. Then $$\Phi(\mathbb{S}_{i_1}^{\epsilon_{i_i}}\ldots \mathbb{S}_{i_l}^{\epsilon_{i_l}})=C'' (1+\frac{1}{2}\tau(h_{\mu})) \mod Y_{\hbar}\g_{\ge 2}, $$
where $\tau$ is defined in Equation (\ref{tau}), and $C''$ is a scalar function acting as constants $\pm 1$ on every weight space and not depending on  $\lambda, q$ or $\mathbf{z}$.
\end{proposition}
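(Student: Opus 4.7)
My plan is to combine the braid relations for quantum Weyl operators (Proposition \ref{BraidRelations} (\ref{1.5})), which imply that $\mathbb{S}_{i_1}^{\epsilon_1}\cdots\mathbb{S}_{i_l}^{\epsilon_l}$ depends only on $t^\mu\in BW^{aff}$, with the commutativity of lattice elements in $BW^{ext}$ (Section \ref{braidgpsection}) to reduce to the case of a single simple coroot. Writing $\mu=\sum_i k_i\alpha_i^\vee$, we obtain $\mathbb{S}_{t^\mu}=\prod_i\mathbb{S}_{t^{\alpha_i^\vee}}^{k_i}$. Since $h\mapsto\tau(h)$ is $\CC$-linear and any product of elements of degree $\ge 1$ lies in $Y_\hbar\g_{\ge 2}$, once the claim $\Phi(\mathbb{S}_{t^{\alpha_i^\vee}})=C_i''(1+\frac{1}{2}\tau(h_{\alpha_i^\vee}))\bmod Y_\hbar\g_{\ge 2}$ is established for each simple coroot, the general statement follows with $C''=\prod_i(C_i'')^{k_i}$; negative powers are absorbed via the identity $(1+\frac{1}{2}\tau(h))^{-1}\equiv 1-\frac{1}{2}\tau(h)\bmod Y_\hbar\g_{\ge 2}$.

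For a simple coroot $\alpha_i^\vee$ I would choose an explicit reduced decomposition of $t^{\alpha_i^\vee}$ in $BW^{aff}$. The prototype is $\alpha_i^\vee=\theta^\vee$, where $t^{\theta^\vee}=s_0 s_\theta$ with $s_\theta$ expressible as a product of finite-type simple reflections $s_{j_1}\cdots s_{j_k}$, $j_r\ne 0$. The remaining simple coroots reduce to this via Weyl-group conjugation (for coroots in the $W$-orbit of $\theta^\vee$), possibly with additional $s_0$-factors for short roots in non-simply-laced types. At the degree-zero level ($\hbar=0$, $q=1$), $\mathbb{S}_{t^{\alpha_i^\vee}}$ becomes the classical lift of a lattice translation in $UL\g$, which preserves every finite weight and acts by a scalar $\pm 1$ coming from the $(-1)^{\nu(H_{j_r})}$-factors built into each finite-type quantum Weyl element; this produces the weight-dependent sign $C''_i$.

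The content of the proposition sits in the first-order $\hbar$-correction, which must collect contributions from three sources: (i) the deformation of the finite-type factors $\mathbb{S}_{j_r}$, $j_r\ne 0$, arising from the $q$-exponentials and the normalization $q^{\mathcal{H}_{j_r}(\mathcal{H}_{j_r}+1)/2}$ in (\ref{quantumWeyl}); (ii) the factor $\mathbb{S}_0$, which using the identifications of Section \ref{LatticeElts} is rewritten in terms of the loop generators $F_{i,1}, E_{i,-1}, H_{i,0}$ and the ``shift'' $q^{\pm h_\theta}$; and (iii) the $\hbar$-corrections built into $\Phi$ itself, specifically the power-series $g_i(v)$ from Theorem \ref{ValSachiso} and the shift operators $\sigma_i$ that convert the loop index $r$ into the Yangian generator $T_{i,r}$ up to scalars. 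Expanding each factor of the product to first order in $\hbar$ and multiplying, I would show that the non-Cartan contributions (involving $e_\theta, f_\theta$ or their Yangian lifts) cancel, and that the Cartan-valued remainder equals exactly $-T(h_{\alpha_i^\vee})_1+\frac{\hbar}{2}\omega_i^\vee(h_{\alpha_i^\vee})\,t_i^2=\frac{1}{2}\tau(h_{\alpha_i^\vee})$.

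The main obstacle is precisely this last explicit computation. The four non-commuting $q$-exponentials in (\ref{quantumWeyl}), each after substitution producing loop-type expressions through (\ref{ValSachiso}), must be expanded to order $\hbar$ with bookkeeping for every Cartan-type normalization. The appearance of the quadratic piece $\omega_i^\vee(h_\mu)\, t_i^2$ in $\tau$ is the most delicate point, since it does not appear from any single factor in isolation: it arises only after combining the $\hbar$-deformation of $q^{\pm h_\theta}$ (which contributes $\pm\frac{\hbar}{2}h_\theta^2$) with the $q^{\mathcal{H}_0(\mathcal{H}_0+1)/2}$ normalization and the Cartan pieces of the subleading coefficients of $g_i(v)$. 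Showing that these contributions conspire to give exactly the Cartan-quadratic part of $\frac{1}{2}\tau(h_{\alpha_i^\vee})$, with no spurious non-Cartan terms surviving, is the heart of the argument.
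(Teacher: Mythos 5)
Your reduction steps (well-definedness of $\mathbb{S}_{t^\mu}$ via the braid relations, multiplicativity over a commuting family of lattice elements, linearity of $\tau$ modulo $Y_\hbar\g_{\ge 2}$, and the prototype $t^{\theta^\vee}=s_0s_\theta$) reproduce the skeleton of the paper's argument, but the actual content of the proposition --- the first-order computation of $\Phi$ of a rank-one lattice operator --- is left as a declared obstacle rather than proved, and your outline contains no mechanism that makes it tractable. Asserting that after expanding the long product $\mathbb{S}_0\mathbb{S}_{j_1}\cdots\mathbb{S}_{j_k}\mathbb{S}_i\mathbb{S}_{j_k}\cdots\mathbb{S}_{j_1}$ factor by factor ``the non-Cartan contributions cancel'' is precisely the difficulty: the intermediate factors produce conjugations by non-simple root vectors, and nothing in your plan explains why the degree-one terms assemble into $-T(h_{\alpha_i^\vee})_1+\frac{\hbar}{2}\sum_j\omega_j^\vee(h_{\alpha_i^\vee})t_j^2$. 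The paper supplies exactly the missing device in three steps: it expands a single quantum Weyl element modulo $\mathcal{J}^2$ (Lemma \ref{QuantumWeylAtqvs1}), carries out the complete calculation for $\g=\mathfrak{sl}_2$ inside $Y_\hbar\mathfrak{sl}_2$ (Lemma \ref{QuantumWeylSl2}), and then, in Lemma \ref{ShortCoroot}, rewrites $T_{\alpha_i^\vee}=T_w^{-1}(\mathbb{S}_0)\,\mathbb{S}_i=\mathbb{S}\bigl(T_{\omega_i^\vee}T_i^{-1}(\mathcal{H}_i),T_{\omega_i^\vee}T_i^{-1}(\mathcal{E}_i),T_{\omega_i^\vee}T_i^{-1}(\mathcal{F}_i)\bigr)\,\mathbb{S}(\mathcal{H}_i,\mathcal{E}_i,\mathcal{F}_i)$ and invokes Beck's embedding of $U_{q_i}L\mathfrak{sl}_2$ into $U_qL\g$, so that the arbitrary-rank expansion you are worried about is literally the image of the already-done $\mathfrak{sl}_2$ computation. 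Without this reduction (or an equivalent closed calculation) your argument does not establish the stated formula.

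A second, smaller gap is your treatment of the simple coroots themselves. In non-simply-laced types the coroots of the short simple roots are long coroots; they are not $W$-conjugate to $\theta^\vee$, and they cannot be reached from $t^{\theta^\vee}$ by Weyl conjugation ``with additional $s_0$-factors'' --- as stated this step is not an argument, and the identification $\mathcal{E}_0\mapsto T_wT_{\omega_i^\vee}T_i^{-1}(\mathcal{E}_i)$ from Section \ref{LatticeElts}, which drives the rank-one reduction, is only available when $\alpha_i$ has the same length as $\theta$. The paper avoids the long coroots altogether: it proves the formula only for coroots conjugate to $\theta^\vee$ and uses that these already span $Q^\vee$, deducing the general case by the same multiplicativity you invoke. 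If you insist on reducing to all simple coroots you must give a genuine argument for the long ones; otherwise you should switch, as the paper does, to a spanning set of short coroots.
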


We want to apply this Proposition to $\mu\in Q^{\vee}$ dominant, and $t^{\mu}=s_{i_1}\ldots s_{i_l}$.

\begin{remark}
Gautam and Toledano Laredo conjectured a general formula for the elements of the quantum Weyl group which correspond to lattice elements of the braid group $BW^{aff}$, see \cite{GTL2}  for a discussion of the $\mathfrak{sl}_2$ case.
\end{remark}

\begin{proof}
We will prove this in Lemma \ref{ShortCoroot} for the special case $\mu=\alpha_i^{\vee}$, where $\alpha_i$ is a simple root of the same length as $\theta$, $\alpha_i^{\vee}=w(\theta^{\vee})$ for some $w\in W$. As $\theta$ is always a long root, $\theta^{\vee}$ is always short,  such $\alpha_i^{\vee}$ span $Q^{\vee}$, and the general case follows. 
\end{proof}

We first prove an auxiliary lemma about quantum Weyl group operators. Remember that the map $\Phi$ maps the filtered algebra $U_qL\g$ to the completion of the graded algebra $Y_\hbar \g$, respecting the filtration and grading, and that we are interested in $\Phi(\mathbb{S}_i) \in Y_{\hbar}\g$ up to $(Y_\hbar \g)_{\ge 2}$. For that purpose, we first rewrite $\mathbb{S}_i \in U_qL\g$, disregarding the elements of $\mathcal{J}^2 \subset U_qL\g$.

\begin{lemma}Up to $\mathcal{J}^2$, the quantum Weyl group operator is
\begin{eqnarray*}\mathbb{S}_i=\mathbb{S}(\mathcal{H}_i, \mathcal{E}_i, \mathcal{F}_i)&=& \exp_{q_i^{-1}}(q_i^{-1}\mathcal{E}_iq_{i}^{-\mathcal{H}_i}) \exp_{q_i^{-1}}(-\mathcal{F}_i)\exp_{q_i^{-1}}(q_i\mathcal{E}_iq_{i}^{\mathcal{H}_i}) \, q_i^{\mathcal{H}_i(\mathcal{H}_i+1)/2} \\
&=&\left( 1+\frac{d_i\hbar}{4}(\mathcal{E}_i\mathcal{F}_i+\mathcal{F}_i\mathcal{E}_i-\mathcal{H}_i) \right)\exp(\mathcal{E}_i)\exp(-\mathcal{F}_i)\exp(\mathcal{E}_i) \qquad \textrm{ mod  } \mathcal{J}^2.\end{eqnarray*}
Analogously, 
\begin{eqnarray*}\mathbb{S}_i=\mathbb{S}(\mathcal{H}_i, \mathcal{E}_i, \mathcal{F}_i)&=&
\exp_{q_i^{-1}}(-q_i^{-1}\mathcal{F}_iq_{i}^{\mathcal{H}_i}) \exp_{q_i^{-1}}(\mathcal{E}_i)\exp_{q_i^{-1}}(-q_i\mathcal{F}_iq_{i}^{-\mathcal{H}_i}) \, q_i^{\mathcal{H}_i(\mathcal{H}_i+1)/2} \\
&=&\left( 1+\frac{d_i\hbar}{4}(\mathcal{E}_i\mathcal{F}_i+\mathcal{F}_i\mathcal{E}_i-\mathcal{H}_i) \right)\exp(\mathcal{-F}_i)\exp(\mathcal{E}_i)\exp(\mathcal{-F}_i) \qquad \textrm{ mod  } \mathcal{J}^2.
\end{eqnarray*}\label{QuantumWeylAtqvs1}
\end{lemma}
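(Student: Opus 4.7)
The strategy is a direct expansion of both formulas for $\mathbb{S}_i$ to first order in $\hbar$ (writing $q_i = \exp(d_i\hbar/2)$), since modulo $\mathcal{J}^2$ only the $\hbar^0$ and $\hbar^1$ pieces matter. Two preliminary reductions will shorten the calculation. First, since $[m]_{q_i}$ is invariant under $q_i \leftrightarrow q_i^{-1}$, it is an even function of $\hbar$, so $[m]_{q_i}! \equiv m! \pmod{\hbar^2}$; combining this with $q_i^{-m(m-1)/2} \equiv 1 - (d_i\hbar/4)m(m-1) \pmod{\hbar^2}$ and the elementary identity $\sum_m \frac{m(m-1)}{m!}x^m = x^2\exp(x)$ gives
$$\exp_{q_i^{-1}}(x) \equiv \exp(x)\bigl(1 - (d_i\hbar/4)x^2\bigr) \pmod{\hbar^2},$$
reducing each quantum exponential to the classical one with a polynomial correction. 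Second, using $q_i^{\mathcal{H}_i}\mathcal{F}_i = q_i^{-2}\mathcal{F}_iq_i^{\mathcal{H}_i}$ and its $\mathcal{E}_i$-analogue, each of the arguments collapses to a pure monomial such as $-q_i^{-1}\mathcal{F}_iq_i^{\mathcal{H}_i} = -q_i^{\mathcal{H}_i+1}\mathcal{F}_i$.

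With the arguments now in the form $q_i^{a(\mathcal{H}_i)}X$ for $X \in \{\pm\mathcal{F}_i,\mathcal{E}_i\}$, I would write $q_i^{a(\mathcal{H}_i)}X \equiv X + (d_i\hbar/2)\,a(\mathcal{H}_i)X \pmod{\hbar^2}$ and apply Duhamel's formula
$$\exp(A + \hbar B) = \exp(A) + \hbar\int_0^1 \exp((1-s)A)\,B\,\exp(sA)\,ds + O(\hbar^2)$$
to each factor. The $\mathfrak{sl}_2$ relation $[\mathcal{H}_i,\mathcal{F}_i^k] = -2k\mathcal{F}_i^k$ lets me commute $\mathcal{H}_i$ past $\exp(-s\mathcal{F}_i)$, reducing the integrand to a polynomial in $\mathcal{F}_i$ multiplying $\exp(-\mathcal{F}_i)$; the $\mathcal{E}_i$-factor is treated symmetrically.

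Finally, I would multiply the expanded exponentials together with the trailing $q_i^{\mathcal{H}_i(\mathcal{H}_i+1)/2} \equiv 1 + (d_i\hbar/4)\mathcal{H}_i(\mathcal{H}_i+1) \pmod{\hbar^2}$, retain only $\hbar^0$ and $\hbar^1$ terms, and simplify using $[\mathcal{E}_i,\mathcal{F}_i]|_{q=1} = \mathcal{H}_i$. The second presentation of $\mathbb{S}_i$ follows the same procedure with the roles of $\mathcal{E}_i$ and $\mathcal{F}_i$ interchanged. The hard part will be the bookkeeping: four separate sources of $\hbar^1$-correction — the $q_i^{\pm\mathcal{H}_i}$ twists inside the arguments, the $q_i^{\pm 1}$ scalar rescalings, the discrepancy between $\exp_{q_i^{-1}}$ and $\exp$, and the trailing $q_i^{\mathcal{H}_i(\mathcal{H}_i+1)/2}$ — each contribute through noncommutative multiplication, and the crux is to verify that they conspire to give precisely $(d_i\hbar/4)(\mathcal{E}_i\mathcal{F}_i + \mathcal{F}_i\mathcal{E}_i - \mathcal{H}_i)$ times the classical Weyl element $\exp(-\mathcal{F}_i)\exp(\mathcal{E}_i)\exp(-\mathcal{F}_i)$.
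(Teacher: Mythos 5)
Your proposal is correct and follows essentially the same route as the paper: expand each factor of $\mathbb{S}_i$ to first order in $\hbar$ (using $[m]_{q_i}\equiv m \bmod \hbar^2$, the expansion of the $q$-powers, and the $\mathfrak{sl}_2$ relations satisfied by $\mathcal{E}_i,\mathcal{F}_i,\mathcal{H}_i$ modulo $\mathcal{J}$), then multiply the corrected classical exponentials and collect the order-$\hbar$ terms. The only difference is cosmetic bookkeeping: where you invoke the identity $\exp_{q_i^{-1}}(x)\equiv\exp(x)\left(1-\tfrac{d_i\hbar}{4}x^2\right)$ together with Duhamel's formula, the paper expands each $q$-exponential term by term and then pushes all first-order corrections to the left by conjugating with $\exp(\mathcal{E}_i)$, $\exp(-\mathcal{F}_i)$, which is exactly the ``bookkeeping'' step you defer.
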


\begin{proof}
Let us prove the first identity. Calculate each term separately. 
\begin{eqnarray*}
\exp_{q_i^{-1}}(q_i^{-1}\mathcal{E}_iq_{i}^{-\mathcal{H}_i}) &=& \sum_{m\ge 0}\frac{1}{[m]_{q_i}!}q_i^{-m(m-1)/2}(q_i^{-1}\mathcal{E}_iq_{i}^{-\mathcal{H}_i})^m  \\
&=& \sum_{m\ge 0} \frac{1}{m!} \left(1+\frac{d_i\hbar}{2}(-\frac{m(m-1)}{2}-m)\right) (\mathcal{E}_iq_{i}^{-\mathcal{H}_i})^m   \qquad \textrm{ mod  } \mathcal{J}^2 \\
&=& \sum_{m\ge 0} \frac{1}{m!} \left(1+\frac{d_i\hbar}{2}(\frac{m(m-1)}{2}+m(1-\mathcal{H}_i))\right) \mathcal{E}_i ^m   \qquad \textrm{ mod  } \mathcal{J}^2 \\
&=& \left(1+\frac{d_i\hbar}{2}(\frac{1}{2}\mathcal{E}_i^2 +(1-\mathcal{H}_i)\mathcal{E}_i)\right) \exp(\mathcal{E}_i) \qquad \textrm{ mod  } \mathcal{J}^2.
\end{eqnarray*}
Similarly, 
\begin{eqnarray*}
\exp_{q_i^{-1}}(-\mathcal{F}_i) &=& \left(1+\frac{d_i\hbar}{2}(\frac{-1}{2}\mathcal{F}_i^2 ) \right)\exp(\mathcal{-F}_i) \qquad \textrm{ mod  } \mathcal{J}^2 \\
\exp_{q_i^{-1}}(q_i\mathcal{E}_iq_{i}^{\mathcal{H}_i}) & = &\left(1+\frac{d_i\hbar}{2}(\frac{-3}{2}\mathcal{E}_i^2 +(\mathcal{H}_i-1)\mathcal{E}_i)\right) \exp(\mathcal{E}_i)  \textrm{ mod  } \mathcal{J}^2 \\
q_i^{\mathcal{H}_i(\mathcal{H}_i+1)/2}&=& 1+\frac{d_i\hbar}{2}  \frac{\mathcal{H}_i(\mathcal{H}_i+1)}{2} \textrm{ mod  } \mathcal{J}^2.
\end{eqnarray*}
Thus, 
$$\mathbb{S}_i \cdot \left(\exp(\mathcal{E}_i)\exp(-\mathcal{F}_i)\exp(\mathcal{E}_i) \right)^{-1} =$$
$$=\left(1+\frac{d_i\hbar}{2}(\frac{1}{2}\mathcal{E}_i^2 +(1-\mathcal{H}_i)\mathcal{E}_i)\right) \cdot \mathrm{Ad}(\exp (\mathcal{E}_i))\left(1+\frac{d_i\hbar}{2}(\frac{-1}{2}\mathcal{F}_i^2 )\right) \cdot $$
$$\cdot \mathrm{Ad}(\exp (\mathcal{E}_i)\exp (\mathcal{-F}_i))\left(1+\frac{d_i\hbar}{2}(\frac{-3}{2}\mathcal{E}_i^2 +(\mathcal{H}_i-1)\mathcal{E}_i)\right)  \cdot $$
$$\cdot \mathrm{Ad}(\exp (\mathcal{E}_i)\exp (\mathcal{-F}_i)\exp (\mathcal{E}_i))\left(1+\frac{d_i\hbar}{2}  \frac{\mathcal{H}_i(\mathcal{H}_i+1)}{2} \right) \qquad \textrm{ mod  } \mathcal{J}^2. $$

Using that $\mathcal{E}_i, \mathcal{F}_i, \mathcal{H}_i$ satisfy relations for $U\mathfrak{sl}_2$ modulo $\mathcal{J}$, we get that 
\begin{eqnarray*}\mathrm{Ad}(\exp (\mathcal{E}_i))\mathcal{E}_i=\mathcal{E}_i, &  \mathrm{Ad}(\exp (\mathcal{E}_i))\mathcal{H}_i=\mathcal{H}_i-2 \mathcal{E}_i,  &\mathrm{Ad}(\exp (\mathcal{E}_i))\mathcal{F}_i=\mathcal{F}_i+\mathcal{H}_i- \mathcal{E}_i \quad \textrm{ mod  } \mathcal{J}\\
\mathrm{Ad}(\exp (\mathcal{-F}_i))\mathcal{F}_i=\mathcal{F}_i, &  \mathrm{Ad}(\exp (-\mathcal{F}_i))\mathcal{H}_i=\mathcal{H}_i-2 \mathcal{F}_i,  &\mathrm{Ad}(\exp (-\mathcal{F}_i))\mathcal{E}_i=\mathcal{E}_i+\mathcal{H}_i- \mathcal{F}_i \quad \textrm{ mod  } \mathcal{J}.
\end{eqnarray*}

Consequently,
$$\mathrm{Ad}(\exp (\mathcal{E}_i)\exp (\mathcal{-F}_i))(\hbar \mathcal{E}_i)=-\hbar \mathcal{F}_i, \quad  \mathrm{Ad}(\exp (\mathcal{E}_i)\exp (\mathcal{-F}_i))(\hbar\mathcal{H}_i)=\hbar(-\mathcal{H}_i-2\mathcal{F}_i) \quad \textrm{ mod  } \mathcal{J}^2$$
$$\mathrm{Ad}(\exp (\mathcal{E}_i)\exp (\mathcal{-F}_i)\exp (\mathcal{E}_i))(\hbar\mathcal{H}_i)=-\hbar\mathcal{H}_i \quad \textrm{ mod  } \mathcal{J}^2.$$

Finally, 
\begin{eqnarray*}
\mathbb{S}_i&=& \left(1+\frac{d_i\hbar}{2}(\frac{1}{2}\mathcal{E}_i^2 +(1-\mathcal{H}_i)\mathcal{E}_i) \right)\cdot \left(1+\frac{d_i\hbar}{2}(\frac{-1}{2}(\mathcal{F}_i+\mathcal{H}_i- \mathcal{E}_i )^2)\right)\cdot \\
&&\cdot \left(1+\frac{d_i\hbar}{2}(\frac{-3}{2}\mathcal{F}_i^2 +(\mathcal{H}_i+2\mathcal{F}_i+1)\mathcal{F}_i )\right)\cdot \left(1+\frac{d_i\hbar}{2}\frac{\mathcal{H}_i(\mathcal{H}_i-1)}{2} \right)\cdot \\
&&\cdot \left(\exp(\mathcal{E}_i)\exp(-\mathcal{F}_i)\exp(\mathcal{E}_i) \right) \\
&=& \left(1+\frac{d_i\hbar}{4}(\mathcal{E}_i\mathcal{F}_i+\mathcal{F}_i\mathcal{E}_i-\mathcal{H}_i) \right)\cdot \left(\exp(\mathcal{E}_i)\exp(-\mathcal{F}_i)\exp(\mathcal{E}_i) \right) \quad \textrm{ mod  } \mathcal{J}^2.
\end{eqnarray*}

The second identity is proved analogously. 
\end{proof}

\begin{lemma}\label{QuantumWeylSl2}
Proposition \ref{DegenerateS} holds for $\g=\mathfrak{sl}_2$. For $\theta^{\vee}=\theta=\alpha_1$ the positive root, and $t^{\alpha_1^{\vee}}=\mathbb{S}_0\mathbb{S}_1\in BW^{aff}$,
$$\Phi(\mathbb{S}_0\mathbb{S}_1)=C''(1+\frac{1}{2}\tau(h_1))=C''(1-T_{1,1}+\frac{\hbar}{2}h_1^2) \mod Y_{\hbar}\g_{\ge 2}.$$
Here $C''$ is a scalar function acting as a constant $\pm 1$ on every weight space and not depending on $\lambda, q$ or $\mathbf{z}$.
\end{lemma}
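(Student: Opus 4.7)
The plan is to apply Lemma \ref{QuantumWeylAtqvs1} to expand $\mathbb{S}_0 \mathbb{S}_1$ modulo $\mathcal{J}^2$ (using the first form for $\mathbb{S}_1$ and the second for $\mathbb{S}_0$), substitute the $\mathfrak{sl}_2$ identifications $\mathcal{E}_0 = -o(1)q^{-h_1}F_{1,1}$, $\mathcal{F}_0 = -o(1)E_{1,-1}q^{h_1}$, $\mathcal{H}_0 = -h_1$ from Section \ref{LatticeElts}, and then compute $\Phi$ term by term modulo $(Y_\hbar\mathfrak{g})_{\ge 2}$. With the decomposition $\mathbb{S}_i = A_i E_i$ coming from Lemma \ref{QuantumWeylAtqvs1} and the observation $[\mathcal{H}_i]_{q_i} - \mathcal{H}_i \in \hbar^2 U_qL\mathfrak{g}$, a short computation (using $\mathcal{E}_i\mathcal{F}_i + \mathcal{F}_i\mathcal{E}_i = 2\mathcal{F}_i\mathcal{E}_i + [\mathcal{H}_i]_{q_i}$ on both sides, with $[h_1, E_{1,-1}]=2E_{1,-1}$ and $[h_1,F_{1,1}]=-2F_{1,1}$) gives $\Phi(A_0) \equiv 1 + \tfrac{\hbar}{2} e_1 f_1$ and $\Phi(A_1) \equiv 1 + \tfrac{\hbar}{2} f_1 e_1$ modulo $(Y_\hbar\mathfrak{g})_{\ge 2}$.

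Next I would write $\Phi(-\mathcal{F}_0) \equiv A + \alpha$ and $\Phi(\mathcal{E}_0) \equiv -B + \beta$ modulo $(Y_\hbar\mathfrak{g})_{\ge 2}$, with $A = x_{1,0}^+$, $B = x_{1,0}^-$ of Yangian degree $0$ and $\alpha = -x_{1,1}^+ + \tfrac{\hbar}{2}Ah_1$, $\beta = -x_{1,1}^- + \tfrac{\hbar}{2}h_1B$ of degree $1$; this follows from the explicit formula for $\Phi$ applied to $E_{1,-1}, F_{1,1}$ (where the shift $e^{\pm\sigma_1}$ contributes the $x_{1,1}^\pm$ terms and $g_{1,0} \equiv 1$ modulo degree $2$) together with $\Phi(q^{\pm h_1}) = 1 \pm \tfrac{\hbar}{2}h_1$ modulo $(Y_\hbar\mathfrak{g})_{\ge 2}$. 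The key algebraic observation is that the Drinfeld relation $[x_{1,1}^+, A] = \hbar A^2$ exactly cancels $\tfrac{\hbar}{2}[A, Ah_1] = -\hbar A^2$, giving $[A, \alpha] = 0$, and similarly $[B, \beta] = 0$. Consequently $\exp(A + \alpha) \equiv (1 + \alpha)e^A$ and $\exp(-B + \beta) \equiv (1 + \beta)e^{-B}$ modulo $(Y_\hbar\mathfrak{g})_{\ge 2}$, so that
\[ \Phi(E_0) \equiv (1+\alpha)\,e^A\,(1+\beta)\,e^{-B}\,(1+\alpha)\,e^A \pmod{(Y_\hbar\mathfrak{g})_{\ge 2}}. \]

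Setting $g := e^Ae^{-B}e^A = \Phi(E_1)$ and expanding $\Phi(\mathbb{S}_0\mathbb{S}_1) = \Phi(A_0)\,\Phi(E_0)\,\Phi(A_1)\,g$ to first order in the degree-$1$ perturbations, I would use the classical $\mathfrak{sl}_2$-Tits reflection identities $gAg^{-1} = -B$, $gBg^{-1} = -A$, $gh_1g^{-1} = -h_1$, and $g^2 = (-1)^{h_1}$, together with the Hadamard expansion $e^A\beta e^{-A} = \beta + [A,\beta] + \tfrac{1}{2}[A,[A,\beta]]$ which truncates because $\mathrm{ad}(A)^3(\beta) = 0$ (verified using the Yangian relations $[X_{1,1}^+, X_{1,0}^+] = \hbar A^2$ and $[T_{1,1}, A]$ via the Drinfeld $T$-$X$ relation). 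The commutator $[A,\beta] = -T_{1,1} - \hbar AB + \tfrac{\hbar}{2}h_1^2$ then produces the three flavours of degree-$1$ contributions we expect: a classical $-T_{1,1}g^2$, a quantum $\tfrac{\hbar}{2}h_1^2\,g^2$, and a parasitic $-\hbar AB \cdot g^2$. The main obstacle will be showing that this last parasitic term cancels precisely against the combined $\hbar AB g^2$ contribution coming from the $A_0, A_1$ prefactors (via the identity $gBAg = ABg^2$ from the reflection relations) and the remaining non-Cartan pieces coming from $x_{1,1}^\pm$ terms, leaving only the Cartan combination $(-T_{1,1} + \tfrac{\hbar}{2}h_1^2)g^2$. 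Once that cancellation is established, assembling everything and absorbing the $o(1)$ signs into $C'' = \pm(-1)^{h_1}$ gives $\Phi(\mathbb{S}_0\mathbb{S}_1) \equiv C''(1 - T_{1,1} + \tfrac{\hbar}{2}h_1^2) = C''(1 + \tfrac{1}{2}\tau(h_1))$ modulo $(Y_\hbar\mathfrak{g})_{\ge 2}$, as claimed.
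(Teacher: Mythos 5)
Your strategy is the same as the paper's: expand $\mathbb{S}_0\mathbb{S}_1$ via Lemma \ref{QuantumWeylAtqvs1}, substitute $\mathcal{E}_0=-o(1)q^{-\mathcal{H}_1}F_{1,1}$, $\mathcal{F}_0=-o(1)E_{1,-1}q^{\mathcal{H}_1}$, and push everything through $\Phi$ modulo $(Y_{\hbar}\mathfrak{sl}_2)_{\ge 2}$, and the intermediate identities you actually state are correct (I checked $[A,\alpha]=0$, $[B,\beta]=0$, $[A,\beta]=-T_{1,1}-\hbar AB+\tfrac{\hbar}{2}h_1^2$, $\mathrm{ad}(A)^3\beta=0$, and $\Phi(A_0)\equiv 1+\tfrac{\hbar}{2}ef$, $\Phi(A_1)\equiv 1+\tfrac{\hbar}{2}fe$). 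The genuine gap is that the argument stops exactly where the content of the lemma begins: you write that ``the main obstacle will be showing that this last parasitic term cancels'' and that ``once that cancellation is established'' the claim follows --- but that cancellation \emph{is} the computation. Concretely, after transporting the degree-one perturbations to one side you must collect: the two $\alpha$-terms conjugated by $\mathrm{Ad}(e^{A})$ and $\mathrm{Ad}(e^{A}e^{-B})$ (each producing $e^{(1)},f^{(1)},T_{1,1}$ and $\hbar$-quadratic pieces), the full Hadamard expansion of $\beta$ (not only $[A,\beta]$ but also $\tfrac12\mathrm{ad}(A)^2\beta=e^{(1)}-\tfrac{\hbar}{2}eh$), and the two prefactor contributions $\tfrac{\hbar}{2}ef$ and $\mathrm{Ad}(g)\bigl(\tfrac{\hbar}{2}fe\bigr)=\tfrac{\hbar}{2}ef$, and then verify that every non-Cartan piece cancels, leaving exactly $-T_{1,1}+\tfrac{\hbar}{2}h_1^2$. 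This bookkeeping is precisely the chain of $\mathrm{Ad}\exp(\pm e)$, $\mathrm{Ad}\exp(\pm f)$ identities that the paper's proof writes out term by term; the cancellation does in fact work, so your plan is viable, but as submitted it is a plan rather than a proof.

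A secondary looseness: your formulas $\Phi(-\mathcal{F}_0)\equiv A+\alpha$, $\Phi(\mathcal{E}_0)\equiv -B+\beta$ silently fix the sign convention $o(1)=+1$, and you propose to ``absorb the $o(1)$ signs into $C''=\pm(-1)^{h_1}$''. The two sign choices genuinely give different constants ($C''=1$ for one, $C''=(-1)^{h_1}$ for the other), and in the paper the second case is reduced to the first by the automorphism $\mathcal{E}_i\mapsto-\mathcal{E}_i$, $\mathcal{F}_i\mapsto-\mathcal{F}_i$ together with identifying the discrepancy $\mathbb{S}(\mathcal{H}_1,-\mathcal{E}_1,-\mathcal{F}_1)^{-1}\mathbb{S}(\mathcal{H}_1,\mathcal{E}_1,\mathcal{F}_1)$ with the square of a Tits element acting by $(-1)^{h}$ on weight spaces. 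If you complete your computation for one fixed sign, you still need an argument of this kind (or a second direct computation) for the other sign; ``absorbing signs'' does not by itself supply it.
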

\begin{proof}

Write $e,f,h$ for $x_1^+=X_{1,0}^+,x_1^-=X_{1,0}^-,T_{1,0}$ in $\mathfrak{sl}_2\subseteq (Y_\hbar \mathfrak{sl}_2)_{0}$, and $e^{(1)},f^{(1)},h^{(1)}$ for $X_{1,1}^+,X_{1,1}^-,T_{1,1}$ in $(Y_\hbar \mathfrak{sl}_2)_{1}$.  

As in the proof of Lemma \ref{14}, using the definition of $\Phi$ we see that 
\begin{eqnarray*}
\Phi(\mathcal{E}_1)=e, & \Phi(\mathcal{F}_1)=f, & \Phi(\mathcal{H}_1)=-\Phi(\mathcal{H}_0) =h \quad \textrm{ mod } (Y_{\hbar}\mathfrak{sl}_2)_{\ge 2}.
\end{eqnarray*}

The isomorphism between two presentations of the quantum loop algebra depends on the choice of sign associated to the only vertex of the Dynkin diagram $o(1)=\pm 1$. Under this isomorphism, $\mathcal{E}_0=-o(1)q^{-\mathcal{H}_1}F_{1,1}$ and $\mathcal{F}_0=-o(1)E_{1,-1}q^{\mathcal{H}_1}$, so 
\begin{eqnarray*}
\Phi(\mathcal{E}_0)&=-o(1)(1-\frac{\hbar}{2}h)(f+f^{(1)})&=-o(1)(f+f^{(1)}-\frac{\hbar}{2}hf) \quad \textrm{ mod } (Y_{\hbar}\mathfrak{sl}_2)_{\ge 2} \\
\Phi(\mathcal{F}_0)&=-o(1)(e-e^{(1)})(1+\frac{\hbar}{2}h) &=-o(1)(e-e^{(1)}+\frac{\hbar}{2}eh) \quad \textrm{ mod } (Y_{\hbar}\mathfrak{sl}_2)_{\ge 2}.
\end{eqnarray*}
and
$$
\hbar \Phi(\mathcal{E}_0)=-o(1) \hbar f, \quad  \hbar \Phi(\mathcal{F}_0)=-o(1) \hbar e \quad \textrm{ mod } (Y_{\hbar}\mathfrak{sl}_2)_{\ge 2}.
$$

Using Lemma \ref{QuantumWeylAtqvs1} and the fact that $\Phi|_{\mathcal{J}^2}:\mathcal{J}^2\to (Y_{\hbar}\mathfrak{sl}_2)_{\ge 2}$ we get
\begin{eqnarray*}
\Phi(\mathbb{S}_0\mathbb{S}_1) &=&  \left( 1+\frac{\hbar}{4}\left(\Phi\left(\mathcal{E}_0\mathcal{F}_0+\mathcal{F}_0\mathcal{E}_0-\mathcal{H}_0\right)\right) \right)\Phi\left(\exp(\mathcal{E}_0)\exp(-\mathcal{F}_0)\exp(\mathcal{E}_0)\right)\cdot  \\
&& \cdot  \left( 1+\frac{\hbar}{4}\left(\Phi\left(\mathcal{E}_1\mathcal{F}_1+\mathcal{F}_1\mathcal{E}_1-\mathcal{H}_1\right)\right) \right)\Phi\left(\exp(\mathcal{-F}_1)\exp(\mathcal{E}_1)\exp(\mathcal{-F}_1)\right) \quad \textrm{ mod } (Y_{\hbar}\mathfrak{sl}_2)_{\ge 2}
\end{eqnarray*}
Using that
$$\Ad\left(\Phi\left(\exp(\mathcal{E}_0)\exp(-\mathcal{F}_0)\exp(\mathcal{E}_0)\right)\right)\left(\Phi\left(\mathcal{E}_1\mathcal{F}_1+\mathcal{F}_1\mathcal{E}_1-\mathcal{H}_1\right) \right)= ef+fe+h \quad \textrm{ mod } (Y_{\hbar}\mathfrak{sl}_2)_{\ge 1},$$
we get that the above expression is equal to 
\begin{eqnarray*}
\Phi(\mathbb{S}_0\mathbb{S}_1) &=& \left( 1+\frac{\hbar}{2}\left(ef+fe+h\right) \right) \exp(\Phi(\mathcal{E}_0))\exp(\Phi(-\mathcal{F}_0))\exp(\Phi(\mathcal{E}_0))\exp(-f)\exp(e)\exp(-f).
\end{eqnarray*}

Let us first assume that $o(1)=-1$. Then
$$\exp(\Phi(\mathcal{E}_0))\exp(\Phi(-\mathcal{F}_0))\exp(\Phi(\mathcal{E}_0))\exp(-f)\exp(e)\exp(-f)=
$$
$$=\exp(f+f^{(1)}-\frac{\hbar}{2}hf)\exp(-e+e^{(1)}-\frac{\hbar}{2}eh)\exp(f+f^{(1)}-\frac{\hbar}{2}hf)\exp(-f)\exp(e)\exp(-f) \quad \textrm{ mod } (Y_{\hbar}\mathfrak{sl}_2)_{\ge 2}$$

The defining relations of the Yangian imply that $[f,f^{(1)}-\frac{\hbar}{2}hf]=0=[e,e^{(1)}-\frac{\hbar}{2}eh]$. From this one can deduce the following easy identities: 
\begin{eqnarray*}
\exp(f+f^{(1)}-\frac{\hbar}{2}hf)\exp(-f) &=&1+f^{(1)}-\frac{\hbar}{2}hf  \quad \textrm{ mod } (Y_{\hbar}\mathfrak{sl}_2)_{\ge 2}\\
\exp(-e+e^{(1)}-\frac{\hbar}{2}eh)\exp(e) &=&1+e^{(1)}-\frac{\hbar}{2}eh \quad \textrm{ mod } (Y_{\hbar}\mathfrak{sl}_2)_{\ge 2}\\
\mathrm{Ad}\exp(-e+e^{(1)}-\frac{\hbar}{2}eh)x&=&\mathrm{Ad}\exp(-e)x \quad \textrm{ mod } (Y_{\hbar}\mathfrak{sl}_2)_{\ge 2},\quad x\in (Y_{\hbar}\mathfrak{sl}_2)_{\ge 1}\\
\mathrm{Ad}\exp(f+f^{(1)}-\frac{\hbar}{2}hf)x&=&\mathrm{Ad}\exp(f)x \quad \textrm{ mod } (Y_{\hbar}\mathfrak{sl}_2)_{\ge 2},\quad x\in (Y_{\hbar}\mathfrak{sl}_2)_{\ge 1},
\end{eqnarray*}
which we will use along with the following identities (which can also be obtained directly from the defining relations of the Yangian):
\begin{eqnarray*}
\mathrm{Ad}\exp(f)(f^{(1)})&=& f^{(1)}+\hbar f^2 \quad \textrm{ mod } (Y_{\hbar}\mathfrak{sl}_2)_{\ge 2} \\
\mathrm{Ad}\exp(f)(h^{(1)})&=& h^{(1)}+2f^{(1)}+\hbar (hf+fh+3f^2) \quad \textrm{ mod } (Y_{\hbar}\mathfrak{sl}_2)_{\ge 2} \\
\mathrm{Ad}\exp(f)(e^{(1)})&=& e^{(1)}-h^{(1)}-f^{(1)}-\hbar (\frac{hf+fh}{2}+f^2) \quad \textrm{ mod } (Y_{\hbar}\mathfrak{sl}_2)_{\ge 2} \\
\mathrm{Ad}\exp(-e)(e^{(1)})&=& e^{(1)}+\hbar e^2 \quad \textrm{ mod } (Y_{\hbar}\mathfrak{sl}_2)_{\ge 2} \\
\mathrm{Ad}\exp(-e)(h^{(1)})&=& h^{(1)}+2e^{(1)}+\hbar (he+eh+3e^2) \quad \textrm{ mod } (Y_{\hbar}\mathfrak{sl}_2)_{\ge 2} \\
\mathrm{Ad}\exp(-e)(f^{(1)})&=& f^{(1)}-h^{(1)}-e^{(1)}-\hbar (\frac{he+eh}{2}+e^2) \quad \textrm{ mod } (Y_{\hbar}\mathfrak{sl}_2)_{\ge 2}
\end{eqnarray*}
to get that 
$$\exp(\Phi(\mathcal{E}_0))\exp(\Phi(-\mathcal{F}_0))\exp(\Phi(\mathcal{E}_0))\exp(-f)\exp(e)\exp(-f)=$$
$$=\mathrm{Ad}(\exp(f)\exp(-e))(1+f^{(1)}-\frac{\hbar}{2}hf)\cdot \mathrm{Ad}(\exp(f))(1+e^{(1)}-\frac{\hbar}{2}eh) \cdot (1+f^{(1)}-\frac{\hbar}{2}hf)= $$
$$= \mathrm{Ad}(\exp(f))\left( 1+f^{(1)}-h^{(1)}-\frac{\hbar}{2}(hf-h^2+2ef) \right)\cdot (1+f^{(1)}-\frac{\hbar}{2}hf)= $$
$$=1-h^{(1)}+\frac{\hbar}{2}(h^2-2ef).$$
Putting it all together, we get that in this case, 
\begin{eqnarray*}
\Phi(\mathbb{S}_0\mathbb{S}_1) &=& \left( 1+\frac{\hbar}{2}\left(ef+fe+h\right) \right)\left(1-h^{(1)}+\frac{\hbar}{2}(h^2-2ef) \right)  \quad \textrm{ mod } (Y_{\hbar}\mathfrak{sl}_2)_{\ge 2}\\
&=& 1-h^{(1)}+\frac{\hbar}{2}h^2 \quad \textrm{ mod } (Y_{\hbar}\mathfrak{sl}_2)_{\ge 2},
\end{eqnarray*}
which is exactly the claimed $1+\frac{1}{2}\tau(h_1)$. This finishes the proof in case $o(1)=-1$.

Let us now consider the case $o(1)=1$. This is equivalent to working in $o(1)=-1$ case, and calculating
$$\Phi\left( \mathbb{S}(\mathcal{H}_0,-\mathcal{E}_0,-\mathcal{F}_0)\, \mathbb{S}(\mathcal{H}_1,\mathcal{E}_1,\mathcal{F}_1)\right) \quad \textrm{ mod } (Y_{\hbar}\mathfrak{sl}_2)_{\ge 2}.$$

Let $\Phi_1$ be an automorphism of $U_{q}L\mathfrak{sl}_2$ given by $\Phi_1(\mathcal{E}_i)=-\mathcal{E}_i$, $\Phi_1(\mathcal{F}_i)=-\mathcal{F}_i$, $\Phi_1(\mathcal{H}_i)=\mathcal{H}_i$ for $i=0,1$. Let $\varphi_1$ be an automorphism of $Y_{\hbar}\mathfrak{sl}_2$ given by $\varphi_1(X_{1,m}^\pm)=-X_{1,m}^\pm$, $\varphi_1(T_{1,m})=T_{1,m}$. Then $\Phi\circ \Phi_1=\varphi_1 \circ \Phi$. Consequently, 
\begin{eqnarray*}
 \Phi\left( \mathbb{S}(\mathcal{H}_0,-\mathcal{E}_0,-\mathcal{F}_0)\, \mathbb{S}(\mathcal{H}_1,-\mathcal{E}_1,-\mathcal{F}_1)\right)&=& \Phi\circ \Phi_1 \left( \mathbb{S}(\mathcal{H}_0,-\mathcal{E}_0,-\mathcal{F}_0)\mathbb{S}(\mathcal{H}_1,-\mathcal{E}_1,-\mathcal{F}_1)\right) \\
 &=& \varphi_1\left( 1-h^{(1)}+\frac{\hbar}{2}h^2  \right)   \quad \textrm{ mod } (Y_{\hbar}\mathfrak{sl}_2)_{\ge 2}\\
 &=&  1-h^{(1)}+\frac{\hbar}{2}h^2 \quad \textrm{ mod } (Y_{\hbar}\mathfrak{sl}_2)_{\ge 2}.
\end{eqnarray*}

Finally,  
$$\Phi\left(\mathbb{S}(\mathcal{H}_1,-\mathcal{E}_1,-\mathcal{F}_1)^{-1} \mathbb{S}(\mathcal{H}_1,\mathcal{E}_1,\mathcal{F}_1) \right) =$$ 
$$=(\exp(-e) \exp(f) \exp(-e))^{-1} (1-\frac{\hbar}{4}(ef+fe-h))(1+\frac{\hbar}{4}(ef+fe-h))\exp(e) \exp(-f) \exp(e) $$
$$=\left(\exp(e) \exp(-f) \exp(e)\right)^2 \quad \textrm{ mod } (Y_{\hbar}\mathfrak{sl}_2)_{\ge 2}.$$
This is the square of the quantum Weyl group operator $\exp(e) \exp(-f) \exp(e)$ for the copy of $\mathfrak{sl}_2\subseteq (Y_\hbar \mathfrak{sl}_2)_{0}$. We can restrict any representation of $Y_\hbar \mathfrak{sl}_2$ to this copy of $\mathfrak{sl}_2$, and the square of the quantum Weyl group element acts as a scalar $(-1)^h$ on every weight space of this restricted representations. Calling this scalar function $C''$, we get that indeed, in case of $o(1)=1$, 
$$\Phi\left( \mathbb{S}(\mathcal{H}_0,-\mathcal{E}_0,-\mathcal{F}_0)\mathbb{S}(\mathcal{H}_1,\mathcal{E}_1,\mathcal{F}_1)\right)=C'' (1-h^{(1)}+\frac{\hbar}{2}h^2)\quad \textrm{ mod } (Y_{\hbar}\mathfrak{sl}_2)_{\ge 2}.$$
\end{proof}

\begin{lemma} \label{ShortCoroot}
Let $\alpha_i$ be a simple root of $\g$ the same length as the highest root $\theta$, $\alpha_i^{\vee}$ the corresponding coroot, $t^{\alpha_i^{\vee}}$ the corresponding element of $BW^{aff}$, and $T_{\alpha_i^{\vee}}$ its representation in the quantum Weyl group. Then 
$$\Phi(T_{\alpha_i^{\vee}})=C''\left(1+\frac{1}{2}\tau(h_i) \right) \quad \textrm{ mod } (Y_{\hbar}\mathfrak{g})_{\ge 2},$$
where $C''$ is a scalar function acting as a constant $\pm 1$ on every weight space and not depending on $\lambda, q$ or $\mathbf{z}$.
\end{lemma}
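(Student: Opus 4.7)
The plan is to deduce Lemma \ref{ShortCoroot} from the $\mathfrak{sl}_2$ case (Lemma \ref{QuantumWeylSl2}) by a two-step reduction: first conjugate any long simple root to the highest root $\theta$ via a Weyl group element, then handle $\theta^\vee$ using an $\widehat{\mathfrak{sl}_2}$-subalgebra of $U_qL\g$ attached to $\theta$.

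For the first step, since $\alpha_i$ is long and $W$ acts transitively on long roots, choose $w \in W$ with $w(\theta)=\alpha_i$; then $w\, t^{\theta^\vee}\, w^{-1}=t^{\alpha_i^\vee}$ in $W^{ext}$. Lifting to $BW^{aff}$ and using the braid relations, this yields $\mathbb{S}_w T_{\theta^\vee}\mathbb{S}_w^{-1}=\varepsilon\, T_{\alpha_i^\vee}$ for some sign $\varepsilon$ that can be absorbed into $C''$. The element $\mathbb{S}_w$ is a product of finite quantum Weyl operators $\mathbb{S}_j$ with $j\neq 0$, so $\Phi(\mathbb{S}_w)$ reduces modulo $(Y_\hbar\g)_{\geq 1}$ to a lift of the classical Weyl element $w$ acting on $\g\subset Y_\hbar\g$. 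Conjugation by $\Phi(\mathbb{S}_w)$ therefore implements the classical $W$-action on the quotient $Y_\hbar\g/(Y_\hbar\g)_{\geq 2}$, and the $W$-equivariance of the trigonometric Casimir connection (Theorem \ref{Casimir1}), together with a direct check on the explicit formula for $\tau$, gives $\mathrm{Ad}(\Phi(\mathbb{S}_w))\,\tau(h_\theta)=\tau(h_{\alpha_i})$ modulo $(Y_\hbar\g)_{\geq 2}$. This reduces the claim to the case $\alpha_i^\vee=\theta^\vee$.

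For the second step, use $s_0=t^{\theta^\vee}s_\theta$ in $W^{aff}$ to obtain the reduced decomposition $t^{\theta^\vee}=s_0 s_\theta$, pick any reduced expression $s_\theta=s_{j_1}\cdots s_{j_k}$ in $W$, and write $T_{\theta^\vee}=\mathbb{S}_0\,\mathbb{S}_{j_1}\cdots\mathbb{S}_{j_k}$. The Chevalley generators $\mathcal{E}_0,\mathcal{F}_0,\mathcal{H}_0$ together with the root vectors $\mathcal{E}_\theta=T_{j_1}^{-1}\cdots T_{j_{k-1}}^{-1}(\mathcal{E}_{j_k})$, $\mathcal{F}_\theta$, $\mathcal{H}_\theta$ obtained via Beck's conventions \cite{B} generate a subalgebra of $U_qL\g$ isomorphic to $U_{q_\theta}L\mathfrak{sl}_2$ (here $d_0=d_\theta$ is essential). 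Under this identification, $T_{\theta^\vee}$ matches the lattice element $\mathbb{S}_0\mathbb{S}_1=T_{\alpha_1^\vee}$ of $U_{q_\theta}L\mathfrak{sl}_2$, and the explicit formula for $\Phi$ in Theorem \ref{ValSachiso}, restricted to this subalgebra, agrees modulo degree-$2$ terms with the $\mathfrak{sl}_2$-version $\Phi^{\mathfrak{sl}_2}$ composed with the inclusion $Y_\hbar\mathfrak{sl}_{2,\theta}\hookrightarrow Y_\hbar\g$ attached to the triple $(e_\theta,f_\theta,h_\theta)$. Lemma \ref{QuantumWeylSl2} then yields
\[
\Phi(T_{\theta^\vee})=C''\bigl(1+\tfrac{1}{2}\tau(h_\theta)\bigr)\mod (Y_\hbar\g)_{\geq 2},
\]
completing the proof.

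The main obstacle will be verifying the $\widehat{\mathfrak{sl}_2}$-compatibility claim in the second step in sufficient detail: one must check that the chosen root vectors $\mathcal{E}_\theta,\mathcal{F}_\theta$ satisfy the $U_{q_\theta}L\mathfrak{sl}_2$ relations with $\mathcal{E}_0,\mathcal{F}_0$ (following Beck), and that under $\Phi$ the $\mathfrak{sl}_2$-Yangian element $T^{\mathfrak{sl}_{2,\theta}}_{1,1}$ produced by Lemma \ref{QuantumWeylSl2} matches $T(h_\theta)_1$ in $Y_\hbar\g$ modulo $(Y_\hbar\g)_{\geq 2}$, so that the $\mathfrak{sl}_2$-form of $\tau$ identifies with the $\g$-form $\tau(h_\theta)=-2T(h_\theta)_1+\hbar\sum_i\omega_i^\vee(h_\theta)t_i^2$. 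A secondary technical point is the sign bookkeeping when comparing $\mathbb{S}_w T_{\theta^\vee}\mathbb{S}_w^{-1}$ with $T_{\alpha_i^\vee}$: one must confirm that all sign ambiguities arising from different reduced decompositions act as weight-space scalars independent of $\lambda,q,\mathbf{z}$ and hence can be absorbed into $C''$.
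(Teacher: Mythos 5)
Your overall strategy---reduce to the rank-one Lemma \ref{QuantumWeylSl2} through an embedded affine $\mathfrak{sl}_2$---is in the same spirit as the paper's, but both halves of your first step contain genuine gaps. The identity $\mathbb{S}_w T_{\theta^\vee}\mathbb{S}_w^{-1}=\varepsilon\, T_{\alpha_i^\vee}$ with $\varepsilon$ a sign is false in $BW^{aff}$: lattice elements are not transported by plain conjugation of positive braid lifts. The correct transformation uses mixed lifts, as in the defining relation $s_i^{-1}t^{\omega_i^{\vee}}s_i^{-1}=t^{s_i(\omega_i^{\vee})}$ of Section \ref{braidgpsection}; the paper's proof uses exactly $t^{\alpha_i^{\vee}}=s_{i_l}^{-1}\cdots s_{i_1}^{-1}\,t^{\theta^{\vee}}\,s_{i_1}^{-1}\cdots s_{i_l}^{-1}$ for $w=s_{i_1}\cdots s_{i_l}$ with $w(\alpha_i)=\theta$. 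The discrepancy between this and your naive conjugation is a pure-braid element (built from conjugates of $\mathbb{S}_j^{\pm 2}$), and by Lemma \ref{QuantumWeylAtqvs1} its image under $\Phi$ modulo $(Y_\hbar\g)_{\ge 2}$ carries non-scalar degree-one terms of the shape $\hbar(\mathcal{E}_j\mathcal{F}_j+\mathcal{F}_j\mathcal{E}_j-\mathcal{H}_j)$; it is not a weight-space constant and cannot be absorbed into $C''$. Likewise, the claim $\mathrm{Ad}(\Phi(\mathbb{S}_w))\,\tau(h_\theta)=\tau(h_{\alpha_i})$ mod $(Y_\hbar\g)_{\ge 2}$ is not available: $\tau$ itself is not $W$-equivariant; its failure of equivariance is by degree-one terms proportional to $\hbar K_\beta$ for roots flipped by $w$, and the $W$-equivariance asserted in Theorem \ref{Casimir1} holds only because these cancel against the transformation of the $\frac{d\alpha}{e^\alpha-1}K_\alpha$ part of the connection, not termwise. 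Since the lemma is true, your two error terms would have to compensate each other, but that cancellation is precisely what would need to be proved and is not. The compatibility you flag in step 2 (that the product $\mathbb{S}_{j_1}\cdots\mathbb{S}_{j_k}$ along a reduced word for $s_\theta$ agrees with the $\mathfrak{sl}_2$-type operator $\mathbb{S}(\mathcal{H}_\theta,\mathcal{E}_\theta,\mathcal{F}_\theta)$, and that one gets a genuine copy of $U_{q_\theta}L\mathfrak{sl}_2$ containing $\mathcal{E}_0,\mathcal{F}_0$) is also nontrivial and left unverified.

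The paper's argument sidesteps all of this. From $t^{\theta^\vee}=s_0s_\theta$, $s_\theta=s_{i_1}\cdots s_{i_l}s_is_{i_l}\cdots s_{i_1}$ and the mixed-lift relation above, it obtains in the quantum Weyl group representation $T_{\alpha_i^\vee}=T_w^{-1}(\mathbb{S}_0)\,\mathbb{S}_i$, and then the explicit formula $\mathcal{E}_0=T_wT_{\omega_i^{\vee}}T_i^{-1}(\mathcal{E}_i)$, $\mathcal{F}_0=T_wT_{\omega_i^{\vee}}T_i^{-1}(\mathcal{F}_i)$ from Section \ref{LatticeElts} identifies both factors as the two quantum Weyl elements $\mathbb{S}(T_{\omega_i^{\vee}}T_i^{-1}(\mathcal{H}_i,\mathcal{E}_i,\mathcal{F}_i))$ and $\mathbb{S}(\mathcal{H}_i,\mathcal{E}_i,\mathcal{F}_i)$ of Beck's vertex copy of $U_{q_i}L\mathfrak{sl}_2$ inside $U_qL\g$ (Proposition 3.8 of \cite{B}). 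Lemma \ref{QuantumWeylSl2} then applies directly, with no conjugation of $\tau$, no root-vector subalgebra attached to $\theta$, and no sign bookkeeping beyond the scalar $C''$ already present. If you want to salvage your route, you must replace the conjugation in step 1 by the mixed-lift identity and track the resulting degree-one corrections explicitly---at which point you essentially recover the paper's computation.
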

\begin{proof}
As  $\alpha_i$ is the same length as $\theta$, and the action of $W$ on $\theta$ is transitive, there exists $w\in W$ such that $w(\alpha_i)=\theta$, $w(\alpha_i^{\vee})=\theta^{\vee}$. From Section \ref{LatticeElts} it follows that then $\mathcal{E}_0=T_wT_{\omega_i^{\vee}}T_i^{-1}(\mathcal{E}_i), \, \mathcal{F}_0=T_wT_{\omega_i^{\vee}}T_i^{-1}(\mathcal{F}_i)$.

Let $w=s_{i_1}\ldots s_{i_l}$ be a reduced decomposition in $W$. Then the following holds in $BW^{aff}$:
\begin{eqnarray*}
s_{\theta}&=& s_{i_1}\ldots s_{i_l} s_i s_{i_l}\ldots s_{i_1}\\
t^{\theta^{\vee}}&=&s_0s_{\theta}\\
t^{\alpha_i^{\vee}}&=&s_{i_l}^{-1}\ldots s_{i_1}^{-1}t^{\theta^{\vee}}s_{i_1}^{-1}\ldots s_{i_l}^{-1}.
\end{eqnarray*}

Thus, in the representation by quantum Weyl group elements,
\begin{eqnarray*}
T_{\theta^{\vee}}&=& \mathbb{S}_0\mathbb{S}_{i_1}\ldots \mathbb{S}_{i_l}\mathbb{S}_i \mathbb{S}_{i_l}\ldots \mathbb{S}_{i_1}\\
T_{\alpha_i^{\vee}}&=& \mathbb{S}_{i_l}^{-1}\ldots \mathbb{S}_{i_1}^{-1}T_{\theta^{\vee}}\mathbb{S}_{i_1}^{-1}\ldots \mathbb{S}_{i_l}^{-1}\\
&=& \mathbb{S}_{i_l}^{-1}\ldots \mathbb{S}_{i_1}^{-1}\mathbb{S}_0 \mathbb{S}_{i_1}\ldots \mathbb{S}_{i_l}\mathbb{S}_i\\
&=& T_w^{-1}(\mathbb{S}_0)\mathbb{S}_i\\
&=& T_w^{-1}(\mathbb{S}(\mathcal{H}_0,\mathcal{E}_0,\mathcal{F}_0)) \mathbb{S}(\mathcal{H}_i,\mathcal{E}_i,\mathcal{F}_i)\\
&=& T_w^{-1}(\mathbb{S}(T_wT_{\omega_i^{\vee}}T_i^{-1}(\mathcal{H}_i),T_wT_{\omega_i^{\vee}}T_i^{-1}(\mathcal{E}_i),T_wT_{\omega_i^{\vee}}T_i^{-1}(\mathcal{F}_i))) \mathbb{S}(\mathcal{H}_i,\mathcal{E}_i,\mathcal{F}_i)\\
&=& \mathbb{S}(T_{\omega_i^{\vee}}T_i^{-1}(\mathcal{H}_i),T_{\omega_i^{\vee}}T_i^{-1}(\mathcal{E}_i),T_{\omega_i^{\vee}}T_i^{-1}(\mathcal{F}_i)) \mathbb{S}(\mathcal{H}_i,\mathcal{E}_i,\mathcal{F}_i)
\end{eqnarray*}
Proposition 3.8. in \cite{B} shows that $\mathcal{H}_0\mapsto T_{\omega_i^{\vee}}T_i^{-1}(\mathcal{H}_i)$, $\mathcal{E}_0\mapsto T_{\omega_i^{\vee}}T_i^{-1}(\mathcal{E}_i)$, $\mathcal{F}_0\mapsto T_{\omega_i^{\vee}}T_i^{-1}(\mathcal{F}_i)$,$\mathcal{H}_1\mapsto \mathcal{H}_i$, $\mathcal{E}_1\mapsto \mathcal{E}_i$, $\mathcal{F}_1\mapsto \mathcal{F}_i$ gives an inclusion of $U_{q_i}L\mathfrak{sl}_2$ into $U_qL\g$. Thus, we can use Lemma \ref{QuantumWeylSl2} to conclude that 
$$T_{\alpha_i^{\vee}}=C''\left(1+\frac{1}{2}\tau(h_i) \right) \quad \textrm{ mod } (Y_{\hbar}\mathfrak{g})_{\ge 2}.$$

\end{proof}

\subsection{The scalar $CC''$ }

In Corollary \ref{WriteProduct} we have shown  that $\mathcal{A}^{\mu}(\mathbf{z},\lambda)=C\cdot \mathbb{S}_{i_1}\ldots \mathbb{S}_{i_l} \cdot \mathbb{B}_{s_{i_1}\ldots s_{i_l}}(\lambda).$ In Lemma \ref{16} we calculated $\Phi(\mathbb{B}_{s_{i_1}\ldots s_{i_l}}(\lambda))$ up to $Y_{\hbar}\g_{\ge 2}$. In Proposition \ref{DegenerateS} we proved that up to $Y_{\hbar}\g_{\ge 2}$, $\Phi(\mathbb{S}_{i_1}\ldots \mathbb{S}_{i_l})=C'' (1+\frac{1}{2}\tau(h_{\mu}))$. To calculate $\Phi(\mathcal{A}^{\mu}(\mathbf{z},\lambda))$ up to $Y_{\hbar}\g_{\ge 2}$, we need to calculate the value of $C C''$, which is a scalar function not depending on  $\lambda, q$ or $\mathbf{z}$.

\begin{lemma}\label{C=1}
The scalar $CC''$ is identically equal to $1$.
\end{lemma}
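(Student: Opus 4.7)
The key observation is that $CC''$ is a scalar (acting as constants on each weight space) independent of $\lambda$, $q$, and $\mathbf{z}$, so it is determined by any convenient specialization. Since $\Phi$ carries $\mathcal{J}$ into $(Y_\hbar \g)_{\ge 1}$ and descends to the identity on the quotients $U_qL\g/\mathcal{J} \cong U\g \cong (Y_\hbar\g)_0$, combining Lemma \ref{16} and Proposition \ref{DegenerateS} gives
$$\Phi(\mathcal{A}^{\mu}(\mathbf{z},\tfrac{\lambda}{\hbar})) \equiv CC'' \pmod{(Y_\hbar\g)_{\ge 1}}.$$
Therefore $CC''$ equals the image of $\mathcal{A}^{\mu}(\mathbf{z},\lambda) \in \widehat{U_qL\g}$ under the projection $U_qL\g \twoheadrightarrow U_qL\g/\mathcal{J} \cong U\g$, i.e. its evaluation at $q = 1$, $z = 1$.

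Next I would compute this specialization. Since $CC''$ does not depend on $\lambda$, I can also take $\lambda \to \infty$ in the dominant direction, where each factor $p_{q_{i_j}}$ in $\mathbb{B}_{s_{i_1}\cdots s_{i_l}}(\lambda)$ collapses to its constant term $1$ (the denominators $[-\lambda(\mathcal{H}^j) - \ldots]_{q_{i_j}}$ in (\ref{pformula}) grow without bound, killing the $k\ge 1$ terms). Thus by Corollary \ref{WriteProduct} it suffices to evaluate $C \cdot \mathbb{S}_{i_1}\cdots \mathbb{S}_{i_l}$ at $q = 1$, $z = 1$. Since $\mu \in Q^{\vee}$, the underlying affine Weyl group element $t^{\mu} = s_{i_1}\cdots s_{i_l}$ has trivial image in the finite quotient $W \cong W^{aff}/Q^{\vee}$; consequently at $z = 1$ the classical Tits lift $\mathbb{S}_{i_1}\cdots \mathbb{S}_{i_l}|_{q=1,\,z=1}$ lifts the identity of $W$, hence is a central element of $G$ acting as a scalar on each weight space of every representation.

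The main obstacle is the careful bookkeeping of the various signs comprising $C$: the factor $(-1)^{\rho^{\vee} - t^{\mu}\rho^{\vee}}$ from (\ref{Aw}), the signs $o(i)$ built into the rescaling $\hat{T}_{\omega_i^{\vee}} = o(i) T_{\omega_i^{\vee}}$ of the $\Pi_i^q$, and the $c_{ji}^{\mathcal{H}_i}$ scalars produced by (\ref{1.6}) when reordering the $\Pi_j^q$ past the $\mathcal{A}_{s_i}(\lambda)$. The cleanest route is the same reduction used in Lemma \ref{ShortCoroot}: since $Q^{\vee}$ is spanned by short coroots $\alpha_i^{\vee}$ with $w(\alpha_i)=\theta$, and since both $C$ and $C''$ are multiplicative under the composition rule defining $\mathcal{A}^{\mu}$, the general case follows from the case $\mu = \alpha_i^{\vee}$ and hence, via the embedding of $U_{q_i}L\mathfrak{sl}_2$ into $U_qL\g$, from the single $\mathfrak{sl}_2$ check $\mu = \alpha_1^{\vee}$, $t^{\mu} = s_0 s_1$. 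A direct matrix computation in the defining $2$-dimensional representation (as in the computation of $\mathbb{S}_0|_{z=1}$ and $\mathbb{S}_1$ in Lemma \ref{QuantumWeylSl2}) shows $\mathbb{S}_0\mathbb{S}_1|_{q=1,\,z=1} = I$, and the remaining signs from $C$ cancel against $C''$ on every weight space, yielding $CC'' = 1$.
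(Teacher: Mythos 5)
Your opening move is the same as the paper's: $CC''$ is a weight-space scalar independent of $\lambda$, $q$ and $\mathbf{z}$, so it is determined by the degree-zero part of $\Phi(\mathcal{A}^{\mu}(\mathbf{z},\frac{\lambda}{\hbar}))$, which you propose to compute at $q=1$ in the limit $\lambda\to\infty$. But from there your route has a genuine gap: after discarding $\mathbb{B}_{s_{i_1}\ldots s_{i_l}}(\lambda)$ you are left with evaluating $C\cdot \mathbb{S}_{i_1}\ldots\mathbb{S}_{i_l}$ at $q=1$, $z=1$, and the decisive step --- that ``the remaining signs from $C$ cancel against $C''$'' --- is asserted, not proved. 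This is precisely the content of Lemma \ref{C=1}, so it cannot be waved through. Note that $C$ is not merely a product of signs: by Corollary \ref{WriteProduct} it absorbs the reordering constants $c_{ji}^{\mathcal{H}_i}$ of (\ref{1.6}), which come from the uncomputed elements $x_j\in\tilde{\h}$ of (\ref{xi}) and are arbitrary nonzero weight-space scalars, as well as the value at $q=1$, $z=1$ of the product of the $\Pi_i^q$'s and the sign $(-1)^{\rho^{\vee}-t^{\mu}\rho^{\vee}}$ of (\ref{Aw}). Your claimed multiplicativity of $C$ and $C''$ in $\mu$ is likewise unestablished (composing the $\mathcal{A}_i$'s for different $i$ produces further $c_{ji}$-type constants when the $\Pi^q$'s are moved past the $\mathcal{A}_{s_i}$'s), and even your $\mathfrak{sl}_2$ anchor is convention-dependent: $\mathbb{S}_0\mathbb{S}_1|_{q=1,z=1}$ is $I$ for one choice of $o(1)$ but $-I$ for the other (consistent with $C''=(-1)^h$ in the $o(1)=1$ case of Lemma \ref{QuantumWeylSl2}), so there really is a nontrivial cancellation to verify, and you have not verified it.

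The paper sidesteps all of this bookkeeping. Instead of decomposing through Corollary \ref{WriteProduct} again, it returns to the definition $\mathcal{A}_i(\mathbf{z},\lambda)=\Pi_i^q\,\mathcal{A}_{w_i}(\lambda)$ and uses two facts: $\lim_{\lambda\to\infty}\mathcal{A}_{w_i}(\lambda)=A^+_{w_i}$, and the normalization $\Pi_i^q|_{q=1}=z^{-\omega_i^{\vee}}(A^+_{w_i})^{-1}$ from Section \ref{operatorsPi}. Hence at $q=1$, $\lambda\to\infty$, each factor collapses exactly to $z^{-\omega_i^{\vee}}$ with no residual constants, so $\mathcal{A}^{\mu}\to z^{-\mu}$, and since $\Phi$ at $q=1$ is $z=e^u$, one gets $1-\mu u$ mod $u^2$; comparing with $CC''(1-h_{\mu}u)$ forces $CC''=1$. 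If you want to salvage your approach, you would have to actually compute the constants $c_{ji}$, the $q=1$, $z=1$ values of the $\Pi_i^q$ products, and the $C''$ of Proposition \ref{DegenerateS} for a chosen reduced word, and check their product is $1$ --- which is exactly the work the paper's shortcut is designed to avoid.
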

\begin{proof}
As it does not depend on $\lambda, q$ or $\mathbf{z}$, it is enough to prove this at $q=1$ (meaning up to $\hbar$), and in the limit $\lambda \to \infty$. 

Putting together Corollary \ref{WriteProduct}, Lemma \ref{16} and Proposition \ref{DegenerateS}, we get that
\begin{equation}
\Phi(\mathcal{A}^{\mu}(\mathbf{z},\lambda))=CC'' \cdot (1+\frac{1}{2}\tau(h_{\mu}))\cdot \Phi(\mathbb{B}_{s_{i_1}\ldots s_{i_l}}(\lambda)).\label{constants}\end{equation}

At $q=1$ and in the limit $\lambda \to \infty$, the right hand side of (\ref{constants}) equals
\begin{equation}
CC'' \cdot (1-h_{\mu}u)\cdot 1.\label{rhs}\end{equation}


Let us now calculate the left hand side, by calculating $\mathcal{A}^{\mu}(\mathbf{z},\lambda)$, for $q=1$, $\lambda\to \infty$. By definition, $\mathcal{A}_i (\mathbf{z},\lambda)=\Pi_i^q \mathcal{A}_{w_i} (\lambda)$, $\lim_{\lambda\to \infty} \mathcal{A}_{w} (\lambda)=A_{w}^{+}$, , and  $\Pi_i^q|_{q=1}=z^{-\omega_i^{\vee}}(A_{w_i}^{+})^{-1}$. Thus,
$$\lim_{\lambda\to \infty} \mathcal{A}_i (\mathbf{z},\lambda)|_{q=1}=\Pi_i^q A_{w_i}^{+}=z^{-\omega_i^{\vee}}$$ and
\begin{align}\lim_{\lambda\to \infty}\mathcal{A}^{\mu}(\mathbf{z},\lambda)|_{q=1} =\prod_i \lim_{\lambda\to \infty} \mathcal{A}_i (\mathbf{z},\lambda)^{k_i}|_{q=1}=\prod_i (z^{-\omega_i^{\vee}})^{k_i}=z^{-\mu}. \end{align}
At $q=1$, $\Phi$ is just substitution $z=e^u$, so
\begin{equation}
\Phi(\lim_{\lambda\to \infty}\mathcal{A}^{\mu}(\mathbf{z},\lambda)|_{q=1}) = \Phi(z^{-\mu})=1-\mu u \quad \mod u^2. \label{lhs} \end{equation}

Comparing \ref{lhs} and \ref{rhs}, and remembering that $h_{\mu}$ corresponds to $\mu$ under the identification  $\h\cong \h^*$, we get that 
$CC'' \equiv 1$.

\end{proof}


\subsection{Proof of Theorem \ref{main}}\label{main proof}
\begin{proof}
First, assume $\mu\in Q^{\vee}$ is dominant. Then 
\begin{align*}
\Phi(\mathcal{A}^\mu (\mathbf{z}, \frac{\lambda}{\hbar}))&=C\cdot  \Phi(\mathbb{S}_{i_1}\ldots \mathbb{S}_{i_l}) \cdot \Phi(\mathbb{B}_{s_{i_1}\ldots s_{i_l}}(\frac{\lambda}{\hbar})) \quad \textrm{(using Corollary \ref{WriteProduct})} \\
&=CC''\cdot (1+\frac{1}{2}\tau(h_{\mu}))\cdot (1+\hbar \sum_{\alpha \in \mathbf{R}_+} \frac{\left<\alpha,\mu\right>}{e^{-\left< \lambda, \alpha \right>}-1} x_{\alpha}^-x_{\alpha}^+)  \mod Y_{\hbar}\g_{\ge 2} \quad \textrm{(using Prop \ref{16} and \ref{DegenerateS}}) \\
&=1+\frac{1}{2}\tau(h_{\mu})+\hbar \sum_{\alpha \in \mathbf{R}_+} \frac{\left<\alpha,\mu\right>}{e^{-\left< \lambda, \alpha \right>}-1} x_{\alpha}^-x_{\alpha}^+  \mod Y_{\hbar}\g_{\ge 2} \quad \textrm{(using Lemma \ref{C=1}}).\end{align*}

Next, for $-\mu\in Q^{\vee}$ antidominant, 
\begin{align*}
\Phi(\mathcal{A}^{-\mu}(\mathbf{z}, \frac{\lambda}{\hbar}))&=\Phi(\mathcal{A}^\mu (\mathbf{z}, \frac{\lambda}{\hbar}-\mu))^{-1} \\
&=1-\frac{1}{2}\tau(h_{\mu})-\hbar \sum_{\alpha \in \mathbf{R}_+} \frac{\left<\alpha,\mu\right>}{e^{-\left< \lambda, \alpha \right>}-1} x_{\alpha}^-x_{\alpha}^+  \mod Y_{\hbar}\g_{\ge 2}. \end{align*}

Finally, for arbitrary $\mu\in Q^{\vee}$, write it as $\mu=\mu_+-\mu_-$, with $\mu_+,\mu_-$ both dominant. Then 
\begin{align}
\Phi(\mathcal{A}^\mu (\mathbf{z}, \frac{\lambda}{\hbar}))&=\Phi(\mathcal{A}^{\mu_+} (\mathbf{z}, \frac{\lambda}{\hbar}-\mu_-))\Phi(\mathcal{A}^{\mu_-}(\mathbf{z}, \frac{\lambda}{\hbar})) \nonumber \\
&=\left( 1+\frac{1}{2}\tau(h_{\mu_+})+\hbar \sum_{\alpha \in \mathbf{R}_+} \frac{\left<\alpha,\mu_+\right>}{e^{-\left< \lambda, \alpha \right>}-1} x_{\alpha}^-x_{\alpha}^+\right) \cdot  \nonumber \\
& \qquad \qquad  \cdot \left(1-\frac{1}{2}\tau(h_{\mu_-})-\hbar \sum_{\alpha \in \mathbf{R}_+} \frac{\left<\alpha,\mu_-\right>}{e^{-\left< \lambda, \alpha \right>}-1} x_{\alpha}^-x_{\alpha}^+ \right) \mod Y_{\hbar}\g_{\ge 2}\nonumber\\
&=1+\frac{1}{2}\tau(h_{\mu})+\hbar \sum_{\alpha \in \mathbf{R}_+} \frac{\left<\alpha,\mu\right>}{e^{-\left< \lambda, \alpha \right>}-1} x_{\alpha}^-x_{\alpha}^+  \mod Y_{\hbar}\g_{\ge 2}.
\end{align}

This finishes the proof of the main Theorem \ref{main}.

\end{proof}




\end{document}